\numberwithin{equation}{section}
\begin{document}

\title{Two-weight, weak type norm inequalities for fractional integral operators and commutators on weighted Morrey and amalgam spaces}
\author{Hua Wang \footnote{E-mail address: wanghua@pku.edu.cn.}\\
\footnotesize{College of Mathematics and Econometrics, Hunan University, Changsha 410082, P. R. China}\\
\footnotesize{\&~Department of Mathematics and Statistics, Memorial University, St. John's, NL A1C 5S7, Canada}}
\date{}
\maketitle

\begin{abstract}
Let $0<\gamma<n$ and $I_\gamma$ be the fractional integral operator of order $\gamma$, $I_{\gamma}f(x)=\int_{\mathbb R^n}|x-y|^{\gamma-n}f(y)\,dy$, and let $[b,I_\gamma]$ be the linear commutator generated by a symbol function $b$ and $I_\gamma$, $[b,I_{\gamma}]f(x)=b(x)\cdot I_{\gamma}f(x)-I_\gamma(bf)(x)$. This paper is concerned with two-weight, weak type norm estimates for such operators on the weighted Morrey and amalgam spaces. Based on weak-type norm inequalities on weighted Lebesgue spaces and certain $A_p$-type conditions on pairs of weights, we can establish the weak-type norm inequalities for fractional integral operator $I_{\gamma}$ as well as the corresponding commutator in the framework of weighted Morrey and amalgam spaces. Furthermore, some estimates for the extreme case are also obtained on these weighted spaces.\\
MSC(2010): 42B20; 46E30; 47B38; 47G10\\
Keywords: Fractional integral operators; weighted Morrey spaces; weighted amalgam spaces; commutators; weak-type norm inequalities.
\end{abstract}

\section{Introduction}

\subsection{Fractional integral operators}

\newtheorem{theorem}{Theorem}[section]

\newtheorem{lemma}{Lemma}[section]

\newtheorem{corollary}[theorem]{Corollary}

Let $\mathbb R^n$ be the $n$-dimensional Euclidean space equipped with the Euclidean norm $|\cdot|$ and the Lebesgue measure $dx$. For given $\gamma$, $0<\gamma<n$, the fractional integral operator (or Riesz potential) $I_{\gamma}$ with order $\gamma$ is defined by (see \cite{stein} for the basic properties of $I_{\gamma}$)
\begin{equation}\label{frac}
I_{\gamma}f(x):=\frac{1}{\zeta(\gamma)}\int_{\mathbb R^n}\frac{f(y)}{|x-y|^{n-\gamma}}\,dy
\qquad\&\qquad \zeta(\gamma)=\frac{\pi^{\frac{n}{\,2\,}}2^\gamma\Gamma(\frac{\gamma}{\,2\,})}{\Gamma(\frac{n-\gamma}{2})}.
\end{equation}
Weighted norm inequalities for fractional integral operators arise naturally in harmonic analysis, and have been extensively studied by several authors. The study of two-weight problem for $I_{\gamma}$ was initiated by Sawyer in his pioneer paper \cite{sawyer2}. By a weight $w$, we mean that $w$ is a nonnegative and locally integrable function. In \cite{sawyer2}, Sawyer concerned the following question: Suppose that $1<p\leq q<\infty$. For which pairs of weights $(w,\nu)$ on $\mathbb R^n$ is the fractional integral operator bounded from $L^p(\nu)$ into weak-$L^q(w)$? A necessary and sufficient condition for the weak-type $(p,q)$ inequality was given by Sawyer. More specifically, he showed that
\begin{theorem}[\cite{sawyer2}]\label{S}
Let $0<\gamma<n$ and $1<p\leq q<\infty$. Given a pair of weights $(w,\nu)$ on $\mathbb R^n$, the weak-type inequality
\begin{equation*}
\sigma\cdot w\big(\big\{x\in\mathbb R^n:\big|I_\gamma f(x)\big|>\sigma\big\}\big)^{1/q}
\leq C\bigg(\int_{\mathbb R^n}|f(x)|^p\nu(x)\,dx\bigg)^{1/p}
\end{equation*}
holds for any $\sigma>0$ if and only if
\begin{equation*}
\bigg(\int_Q\big[I_{\gamma}(\chi_Q w)(x)\big]^{p'}\nu(x)^{1-p'}dx\bigg)^{1/{p'}}\leq C\cdot w(Q)^{1/{q'}}<\infty\qquad(\dag)
\end{equation*}
for all cubes $Q$ in $\mathbb R^n$. Here $\chi_Q$ denotes the characteristic function of the cube $Q$, $p'=p/{(p-1)}$ denotes the conjugate index of $p$, and $C$ is a universal constant.
\end{theorem}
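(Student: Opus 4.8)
My plan is to route both directions through one duality principle. Since $q>1$, the weak-type inequality is equivalent to the ``set-testing'' estimate
\[ \Big(\int_{\mathbb R^n}\big[I_\gamma(\chi_E w)(x)\big]^{p'}\nu(x)^{1-p'}\,dx\Big)^{1/p'}\le C\,w(E)^{1/q'}\qquad(\ddag) \]
holding for \emph{every} measurable set $E\subset\mathbb R^n$ with $0<w(E)<\infty$. To see that $(\ddag)$ suffices, take $f\ge 0$ and $\lambda>0$, put $E=\{x:I_\gamma f(x)>\lambda\}$ (first truncating to $E\cap\{|x|<R\}$ and letting $R\to\infty$ if $w(E)=\infty$), and use that $I_\gamma f>\lambda$ on $E$, that the kernel $|x-y|^{\gamma-n}$ is symmetric so $I_\gamma$ is self-adjoint (Tonelli's theorem), and H\"older's inequality:
\[ \lambda\,w(E)\le\int_E I_\gamma f\cdot w\,dx=\int_{\mathbb R^n}f\cdot I_\gamma(\chi_E w)\,dx\le\|f\|_{L^p(\nu)}\Big(\int_{\mathbb R^n}\big[I_\gamma(\chi_E w)\big]^{p'}\nu^{1-p'}\,dx\Big)^{1/p'}\le C\,w(E)^{1/q'}\|f\|_{L^p(\nu)}, \]
whence $\lambda\,w(E)^{1/q}=\lambda\,w(E)^{1-1/q'}\le C\|f\|_{L^p(\nu)}$. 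For the reverse implication I would use the elementary identity $\|h\|_{L^{p'}(\nu^{1-p'})}=\sup\{\int_{\mathbb R^n}hg\,dx:g\ge0,\ \|g\|_{L^p(\nu)}\le1\}$ together with self-adjointness to write $\|I_\gamma(\chi_E w)\|_{L^{p'}(\nu^{1-p'})}=\sup\{\int_E I_\gamma g\cdot w\,dx:g\ge0,\ \|g\|_{L^p(\nu)}\le1\}$, then estimate each $\int_E I_\gamma g\cdot w\,dx$ by the layer-cake formula---split at a level $t_0>0$, bound the part below $t_0$ by $t_0\,w(E)$ and the part above by the hypothesis $w(\{I_\gamma g>t\})\le(Ct^{-1}\|g\|_{L^p(\nu)})^q$ (convergence of $\int_{t_0}^\infty t^{-q}\,dt$ uses $q>1$), and optimize in $t_0$---to obtain $(\ddag)$. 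Granting this principle, the necessity of $(\dag)$ is immediate, since $(\dag)$ is exactly $(\ddag)$ with $E$ a cube (and the finiteness asserted in $(\dag)$ is part of $(\ddag)$).

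The sufficiency then reduces to showing that $(\dag)$, i.e.\ $(\ddag)$ \emph{for cubes}, self-improves to $(\ddag)$ \emph{for all sets}; this is the substantive part and no longer involves $f$ at all---it is a real-variable statement about the positive operator $I_\gamma$. The route I would follow: first, dominate $I_\gamma$ pointwise from above by a positive dyadic operator of the form $g\mapsto\sum_{Q\in\mathcal D}|Q|^{\gamma/n-1}\big(\int_Q g\,dy\big)\chi_Q$, summed over the finitely many shifted dyadic systems, thereby reducing $(\ddag)$ for $I_\gamma$ to the bound $\big\|\sum_{Q\in\mathcal D}|Q|^{\gamma/n-1}w(Q\cap E)\chi_Q\big\|_{L^{p'}(\nu^{1-p'})}\le C\,w(E)^{1/q'}$ for each fixed dyadic grid $\mathcal D$; second, run a Calder\'on--Zygmund stopping-time decomposition of the cubes of $\mathcal D$ relative to the measure $\chi_E w\,dx$, grouping them into coronas on which the averages $|Q|^{-1}w(Q\cap E)$ are comparable, and use $(\dag)$ on each stopping cube $S$---together with the elementary lower bound $I_\gamma(\chi_S w)(x)\gtrsim|S|^{\gamma/n-1}w(S)$ for $x\in S$, which turns $(\dag)$ into control of $\int_S\nu^{1-p'}\,dx$---to estimate the contribution of each corona; finally, assemble the pieces using the Carleson packing property of the stopping cubes and a H\"older/embedding step into which the hypothesis $p\le q$ is absorbed.

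The necessity and the duality principle are routine; the whole difficulty sits in the cube-to-set self-improvement. The point is that the obvious approach---cover an open $E$ by almost disjoint cubes $\{Q_j\}$ and apply $(\dag)$ to each---is useless, because $\sum_j w(Q_j)^{1/q'}$ is not controlled by $w\big(\bigcup_j Q_j\big)^{1/q'}$ when $q>1$; one genuinely needs the stopping-time and Carleson-packing organization, carried out carefully enough that the local bounds from $(\dag)$ recombine into $(\ddag)$, and this recombination is delicate precisely in the off-diagonal range $p<q$. The technical cornerstone, on which everything else rests, is the pointwise dyadic domination of $I_\gamma$: it is what makes both the positivity of the operator and the dyadic form of the testing condition usable.
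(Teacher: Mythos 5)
This is a quoted result (Sawyer's theorem, cited as \cite{sawyer2}); the paper offers no proof of it, so your attempt can only be measured against Sawyer's original argument and on its own completeness. Your reduction of the problem is fine: the duality computation showing that the weak-type inequality is equivalent to the set-testing estimate $(\ddag)$ (hence that $(\dag)$ is necessary, and that it suffices to upgrade cube testing to set testing) is correct and standard, including the layer-cake step where $q>1$ is used.

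The genuine gap is that the substantive implication --- $(\dag)$ for cubes implies the weak-type bound --- is only a program, and the program as described would not close. First, within each corona you propose to use $(\dag)$ only through the lower bound $I_\gamma(\chi_S w)\gtrsim |S|^{\gamma/n-1}w(S)$ on $S$, i.e.\ through the consequence $|S|^{\gamma/n-1}w(S)\,\sigma(S)^{1/p'}\le C\,w(S)^{1/q'}$ with $\sigma=\nu^{1-p'}$; that is a plain two-weight $A_{p,q}$-type condition, which is known to be strictly weaker than $(\dag)$ and insufficient for the weak type of $I_\gamma$ (this insufficiency is precisely why bump conditions such as $(\S)$ and $(\S\S)$ appear in the present paper), so any argument that retains only this local information cannot succeed. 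Second, the final ``assemble the pieces'' step runs into exactly the obstruction you yourself identify: after estimating each corona by $C\,w(S)^{1/q'}$ via $(\dag)$, the triangle inequality leaves you with $\sum_S w(S)^{1/q'}$, and since $1/q'<1$ this is not controlled by $w(E)^{1/q'}$ even with Carleson packing of the stopping cubes; the promised ``H\"older/embedding step into which $p\le q$ is absorbed'' is precisely the missing idea, not a routine one. For the record, Sawyer's actual proof is more elementary than a corona construction: he takes a Whitney decomposition $\{Q_j\}$ of the level set $\{I_\gamma f>2\lambda\}$ (so that on each $Q_j$ the far part of $f$ contributes at most $c\lambda$), dualizes on each Whitney cube to get $\lambda\,w(Q_j\cap\{I_\gamma f>2\lambda\})\le C\,\|f\chi_{3Q_j}\|_{L^p(\nu)}\,w(Q_j)^{1/q'}$ from $(\dag)$, sums in $j$ by H\"older using $p'/q'\ge1$ (this is where $p\le q$ enters) together with the bounded overlap of the $3Q_j$, and then removes the resulting factor $w(\{I_\gamma f>\lambda\})^{1/q'}$ by an absorption argument after an a priori truncation; supplying either that argument or a correct recombination in your corona scheme is what your proposal still lacks.
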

Sawyer's result is interesting and important, and it promotes a series of research works on this subject (see e.g. \cite{cruz2,cruz3,li,liu,martel,sawyer,zhang}), but it has the defect that the condition $(\dag)$ involves the fractional integral operator $I_{\gamma}$ itself.
In \cite{cruz2}, Cruz-Uribe and P\'erez considered the case when $q=p$, and found a sufficient $A_p$-type condition on a pair of weights $(w,\nu)$ which ensures the boundedness of the operator $I_{\gamma}$ from $L^p(\nu)$ into weak-$L^p(w)$, where $1<p<\infty$. The condition $(\S')$ given below is simpler than $(\dag)$ in the sense that it does not involve the operator $I_{\gamma}$ itself, and hence it can be more easily verified.
\begin{theorem}[\cite{cruz2}]\label{Two3}
Let $0<\gamma<n$ and $1<p<\infty$. Given a pair of weights $(w,\nu)$ on $\mathbb R^n$, suppose that for some $r>1$ and for all cubes $Q$ in $\mathbb R^n$,
\begin{equation*}
\big|Q\big|^{\gamma/n}\cdot\left(\frac{1}{|Q|}\int_Q w(x)^r\,dx\right)^{1/{(rp)}}\left(\frac{1}{|Q|}\int_Q \nu(x)^{-p'/p}\,dx\right)^{1/{p'}}\leq C<\infty.\qquad(\S')
\end{equation*}
Then the fractional integral operator $I_\gamma$ satisfies the weak-type $(p,p)$ inequality
\begin{equation*}
\sigma\cdot w\big(\big\{x\in\mathbb R^n:\big|I_\gamma f(x)\big|>\sigma\big\}\big)^{1/p}
\leq C\bigg(\int_{\mathbb R^n}|f(x)|^p\nu(x)\,dx\bigg)^{1/p},\quad\mbox{for any }~\sigma>0,
\end{equation*}
where $C$ does not depend on $f$ nor on $\sigma>0$.
\end{theorem}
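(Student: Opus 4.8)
The plan is to deduce the weak-type $(p,p)$ inequality from Sawyer's characterization, Theorem~\ref{S}, applied with $q=p$. Since there the condition $(\dag)$ is \emph{necessary and sufficient} for the weak-type $(p,p)$ bound, it suffices to show that the bump hypothesis $(\S')$ (for some fixed $r>1$) forces
\begin{equation*}
\bigg(\int_Q\big[I_\gamma(\chi_Q w)(x)\big]^{p'}\nu(x)^{1-p'}\,dx\bigg)^{1/p'}\le C\,w(Q)^{1/p'}\qquad\text{for every cube }Q ,
\end{equation*}
with $C$ independent of $Q$; note this is a statement about the pair $(w,\nu)$ alone, so no normalization of $f$ enters. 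Writing $\langle h\rangle_Q=|Q|^{-1}\int_Q h$ and $d\mu:=\nu(x)^{1-p'}\,dx=\nu(x)^{-p'/p}\,dx$, the hypothesis $(\S')$ raised to the power $p'$ reads $|R|^{\gamma p'/n-1}\langle w^r\rangle_R^{p'/(rp)}\mu(R)\le C^{p'}$ for every cube $R$.

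First I would split $I_\gamma(\chi_Q w)$ on $Q$ into a ``tail'' and a ``local'' part. Decomposing the defining integral over the dyadic shells $\{\,y:2^{-k-1}\operatorname{diam}Q<|x-y|\le2^{-k}\operatorname{diam}Q\,\}$, $k\ge0$, and using $\gamma<n$ to sum the outermost shells, one obtains for $x\in Q$
\begin{equation*}
I_\gamma(\chi_Q w)(x)\ \le\ C\,|Q|^{\gamma/n-1}w(Q)\ +\ C\sum_{\substack{R\in\mathcal D(Q)\\ x\in R}}|R|^{\gamma/n-1}w(R) ,
\end{equation*}
where $\mathcal D(Q)$ is a dyadic structure on $Q$; the passage from the shell sum to an honest dyadic sum, and the treatment of cubes straddling $\partial Q$, is a routine localization carried out by working with a fixed finite collection of dyadic-type systems adapted to $Q$. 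For the tail term, inserting $|Q|^{\gamma/n-1}w(Q)$ into the left side of the displayed testing inequality gives $|Q|^{\gamma/n-1}w(Q)\,\mu(Q)^{1/p'}=w(Q)^{1/p'}\big(|Q|^{\gamma/n-1}w(Q)^{1/p}\mu(Q)^{1/p'}\big)$, and the last factor is $\le C$ by $(\S')$ together with Jensen's inequality $\langle w\rangle_Q\le\langle w^r\rangle_Q^{1/r}$ --- here only the trivial case $r=1$ of the bump is used.

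The main work is then the local part,
\begin{equation*}
\int_Q\bigg(\sum_{\substack{R\in\mathcal D(Q)\\ x\in R}}|R|^{\gamma/n-1}w(R)\bigg)^{p'}d\mu(x)\ \le\ C\,w(Q) .
\end{equation*}
For this I would run a principal-cube (stopping-time) decomposition of $\mathcal D(Q)$ adapted to the averages of $w$: let $\mathcal F\subseteq\mathcal D(Q)$ consist of $Q$ and, recursively, of the maximal $R\subsetneq F$ ($F\in\mathcal F$) with $\langle w\rangle_R>2\langle w\rangle_F$. Then $\mathcal F$ is sparse, one has $\langle w\rangle_R\le2\langle w\rangle_{\pi(R)}$ where $\pi(R)$ is the minimal member of $\mathcal F$ containing $R$, and $\langle w\rangle$ grows geometrically along chains of $\mathcal F$. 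Since $\gamma>0$, for each $x$ and each $F\in\mathcal F$ the geometric series $\sum_{x\in R,\ \pi(R)=F}|R|^{\gamma/n}$ is $\le C_\gamma|F|^{\gamma/n}$, so the inner sum is dominated by $C\sum_{F\in\mathcal F,\ x\in F}\langle w\rangle_F|F|^{\gamma/n}$, and the matter reduces to
\begin{equation*}
\int_Q\bigg(\sum_{\substack{F\in\mathcal F\\ x\in F}}\langle w\rangle_F\,|F|^{\gamma/n}\bigg)^{p'}d\mu(x)\ \le\ C\,w(Q) ,
\end{equation*}
a Carleson-embedding-type estimate to be closed using the sparseness of $\mathcal F$, the geometric growth of the $\langle w\rangle_F$, and --- decisively --- the \emph{genuine} $L^r$-bump, $r>1$, in $(\S')$.

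I expect this last Carleson-type summation over $\mathcal F$ to be the principal obstacle. In the one-weight theory the analogous step is closed by a reverse Hölder inequality, which is unavailable for a general pair $(w,\nu)$; the extra integrability must instead be extracted from the bump, and it is precisely the exponent $r>1$ (not $r=1$) that makes the summation converge --- consistently with the well-known fact that, unlike for the fractional maximal operator, the unbumped $A_{p,q}$-type condition fails to control $I_\gamma$. The only other point requiring some care, and a routine one, is the localization used in the first step, namely replacing $I_\gamma(\chi_Q w)$ near the running point by a genuine dyadic-model fractional sum built only from averages of $\chi_Q w$ over cubes contained in $Q$.
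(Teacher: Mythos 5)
The statement you were asked to prove is quoted in the paper from \cite{cruz2} and is not proved there at all; the paper only remarks that the original argument runs through a sharp-maximal-function/good-$\lambda$ inequality, and points to \cite{cruz3} for a more elementary proof. Your overall strategy --- invoke Theorem \ref{S} with $q=p$ and show that the bump hypothesis $(\S')$ implies Sawyer's testing condition $(\dag)$ --- is legitimate and not circular, and it is in fact essentially the route of the ``more elementary'' proof the paper cites. The reduction itself, the splitting of $I_\gamma(\chi_Q w)$ on $Q$ into a tail plus a local dyadic sum, and the tail estimate via Jensen and the unbumped form of $(\S')$ are all fine.

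The problem is that the proposal stops exactly where the theorem actually lives. The Carleson-type estimate $\int_Q\big(\sum_{F\in\mathcal F,\,x\in F}\langle w\rangle_F|F|^{\gamma/n}\big)^{p'}\,\nu(x)^{1-p'}dx\le C\,w(Q)$ is left open, and you acknowledge it as ``the principal obstacle''; but sparseness of $\mathcal F$ plus geometric growth of $\langle w\rangle_F$ along chains cannot close it by themselves, because the measure $\mu=\nu^{1-p'}$ bears no a priori relation to $w$ (no $A_\infty$, no reverse H\"older is available for a general pair), so there is no way to sum $\mu(F)$ against $w(F)$ over the stopping family without a further input. There is also a structural weakness in the stopping rule you chose: stopping on $\langle w\rangle$ alone does not make the chain sums $\sum_{F\ni x}\langle w\rangle_F|F|^{\gamma/n}$ geometric, since $\langle w\rangle_F$ doubles going down the chain while $|F|^{\gamma/n}$ may barely decrease. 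The known way to finish is to stop on the fractional averages $|R|^{\gamma/n}\langle w\rangle_R$ themselves (or otherwise dominate the local part pointwise by a dyadic fractional maximal function $M_\gamma(\chi_Q w)$), and then verify the maximal-operator testing inequality $\int_Q M_\gamma(\chi_Q w)^{p'}\,\nu^{1-p'}dx\le C\,w(Q)$ by appealing to the two-weight theory for fractional maximal operators --- and it is precisely at that point, not before, that the genuine $L^r$ bump with $r>1$ is consumed. Until this step (or an equivalent substitute, such as the good-$\lambda$/sharp-maximal argument of \cite{cruz2}) is actually carried out, what you have is a plausible plan whose easy parts are done and whose decisive inequality is missing, so it cannot be accepted as a proof of Theorem \ref{Two3}.
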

The proof of Theorem \ref{Two3} is quite complicated. It depends on an inequality relating the Hardy--Littlewood maximal function and the sharp maximal function which is strongly reminiscent of the good-$\lambda$ inequality of Fefferman and Stein. For another, more elementary proof, see also \cite{cruz3}. This solves a problem posed by Sawyer and Wheeden in \cite{sawyer}. Moreover, in \cite{li}, Li improved this result by replacing the ``power bump" in $(\S')$ by a smaller ``Orlicz bump"(see also \cite{cruz4}). On the other hand, for given $0<\gamma<n$, the linear commutator $[b,I_{\gamma}]$ generated by a suitable function $b$ and $I_{\gamma}$ is defined by
\begin{align}\label{lfrac}
[b,I_{\gamma}]f(x)&:=b(x)\cdot I_{\gamma}f(x)-I_\gamma(bf)(x)\notag\\
&=\frac{1}{\zeta(\gamma)}\int_{\mathbb R^n}\frac{[b(x)-b(y)]\cdot f(y)}{|x-y|^{n-\gamma}}\,dy.
\end{align}
This commutator was first introduced by Chanillo in \cite{chanillo}. In \cite{liu}, Liu and Lu obtained a sufficient $A_p$-type condition for the commutator $[b,I_{\gamma}]$ to satisfy a two-weight weak type $(p,p)$ inequality, where $1<p<\infty$. That condition is an $A_p$-type condition in the scale of Orlicz spaces (see $(\S\S')$ given below).
\begin{theorem}[\cite{liu}]\label{Two4}
Let $0<\gamma<n$, $1<p<\infty$ and $b\in \mathrm{BMO}(\mathbb R^n)$. Given a pair of weights $(w,\nu)$ on $\mathbb R^n$, suppose that for some $r>1$ and for all cubes $Q$ in $\mathbb R^n$,
\begin{equation*}
\big|Q\big|^{\gamma/n}\cdot\left(\frac{1}{|Q|}\int_Q w(x)^r\,dx\right)^{1/{(rp)}}\big\|\nu^{-1/p}\big\|_{\mathcal A,Q}\leq C<\infty,\qquad(\S\S')
\end{equation*}
where $\mathcal A(t)=t^{p'}(1+\log^+t)^{p'}$ and $\log^+t:=\max\{\log t,0\},$ that is,
\begin{equation*}
\log^+t=
\begin{cases}
\log t,\  &\mbox{as}~~t>1;\\
0,\  &\mbox{otherwise}.\\
\end{cases}
\end{equation*}
Then the linear commutator $[b,I_\gamma]$ satisfies the weak-type $(p,p)$ inequality
\begin{equation*}
\sigma\cdot w\big(\big\{x\in\mathbb R^n:\big|[b,I_\gamma]f(x)\big|>\sigma\big\}\big)^{1/p}
\leq C\bigg(\int_{\mathbb R^n}|f(x)|^p\nu(x)\,dx\bigg)^{1/p},\quad\mbox{for any }~\sigma>0,
\end{equation*}
where $C$ does not depend on $f$ nor on $\sigma>0$.
\end{theorem}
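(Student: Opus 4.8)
\emph{Proof proposal.} The plan is to follow the Cruz-Uribe--P\'erez scheme developed in \cite{cruz2} for commutators of Calder\'on--Zygmund operators, adapted to the fractional order $\gamma$. By a standard density and truncation reduction it suffices to establish $\sigma^{p}\,w\big(\{x:|[b,I_\gamma]f(x)|>\sigma\}\big)\le C\int_{\mathbb R^n}|f|^{p}\nu$ for $f\ge0$ bounded with compact support, uniformly in $\sigma>0$. The mechanism has two ingredients: a Fefferman--Stein--type \emph{pointwise} estimate that controls the commutator $[b,I_\gamma]$ by $I_\gamma$ itself together with a fractional Orlicz maximal operator of Zygmund type, and a passage from this pointwise control back to a genuine norm inequality, which in the two-weight setting is the delicate step since $w$ need not belong to $A_\infty$.

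For the first ingredient one shows that, for suitable exponents $0<\delta<\varepsilon<1$, there is a constant $C$ with
\begin{equation*}
M^{\#}_{\delta}\big([b,I_\gamma]f\big)(x)\le C\,\|b\|_{\mathrm{BMO}}\Big(M_{\varepsilon}\big(I_\gamma f\big)(x)+M_{\gamma,L\log L}f(x)\Big),
\end{equation*}
where $M^{\#}_{\delta}g=\big(M^{\#}(|g|^{\delta})\big)^{1/\delta}$, $M_{\varepsilon}g=\big(M(|g|^{\varepsilon})\big)^{1/\varepsilon}$, and $M_{\gamma,L\log L}f(x)=\sup_{Q\ni x}|Q|^{\gamma/n}\|f\|_{L\log L,\,Q}$. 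This is the standard argument: fix a cube $Q\ni x$, write $[b,I_\gamma]f=(b-b_{3Q})\,I_\gamma f-I_\gamma\big((b-b_{3Q})f\big)$, split $(b-b_{3Q})f$ into its local and global parts, and invoke John--Nirenberg in the form $\|b-b_{3Q}\|_{\exp L,\,3Q}\le C\|b\|_{\mathrm{BMO}}$; the Zygmund bump $L\log L$ on $f$ appears because the generalized H\"older inequality in Orlicz spaces gives $\frac{1}{|Q|}\int_Q|b-b_{3Q}|\,|f|\le C\|b\|_{\mathrm{BMO}}\|f\|_{L\log L,\,3Q}$. This is also where the Young function $\mathcal A(t)=t^{p'}(1+\log^{+}t)^{p'}$ of $(\S\S')$ is forced: a further application of the generalized H\"older inequality yields
\begin{equation*}
\|f\|_{L\log L,\,Q}\le C\Big(\frac{1}{|Q|}\int_Q|f|^{p}\nu\Big)^{1/p}\big\|\nu^{-1/p}\big\|_{\mathcal A,\,Q},
\end{equation*}
since $(L\log L)^{-1}(t)\approx t/\log(e+t)$, the inverse of $t\mapsto t^{p}$ is $t^{1/p}$, and $\mathcal A^{-1}(t)\approx t^{1/p'}/\log(e+t)$, so that these three inverse functions multiply to $\approx t$. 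Thus $\mathcal A$ is exactly ``one logarithm worse'' than the $L^{p'}$ bump implicit in $(\S')$.

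It remains to pass from the pointwise estimate to the weak-type bound. For this one uses the key lemma of \cite{cruz2} relating the Hardy--Littlewood maximal function to the sharp maximal function --- the two-weight, weak-type inequality reminiscent of the good-$\lambda$ inequality of Fefferman and Stein --- together with two-weight, weak-type $(p,p)$ bounds for the two operators appearing on the right. The term $M_\varepsilon(I_\gamma f)$ is reduced, via that maximal-function machinery, to Theorem~\ref{Two3}, which supplies the weak-type inequality for $I_\gamma$ under $(\S')$ --- and $(\S')$ follows from $(\S\S')$ because $\|\nu^{-1/p}\|_{L^{p'},Q}\le\|\nu^{-1/p}\|_{\mathcal A,Q}$. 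The term $M_{\gamma,L\log L}f$ is handled directly: if $|Q|^{\gamma/n}\|f\|_{L\log L,Q}>\sigma$, then combining the displayed H\"older estimate with $(\S\S')$ and the elementary bound $w(Q)\le|Q|\big(\tfrac1{|Q|}\int_Q w^{r}\big)^{1/r}$ (H\"older's inequality) gives $w(Q)\le C\sigma^{-p}\int_Q|f|^{p}\nu$, and a Besicovitch-type covering of $\{M_{\gamma,L\log L}f>\sigma\}$ finishes this piece. The power bump $r>1$ on $w$ in $(\S\S')$ --- rather than a mere average of $w$ --- is precisely what lets the first of these two reductions go through, since it survives the auxiliary maximal-function estimates that intervene. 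Accordingly, the main obstacle is this second ingredient: upgrading the sharp-maximal pointwise estimate to a norm inequality without an $A_\infty$ hypothesis on $w$; once the relevant good-$\lambda$-type lemma is in hand, the remainder is the Orlicz bookkeeping above, which is what produces $\mathcal A$. (Alternatively, one may bypass the sharp maximal function and argue directly with a Calder\'on--Zygmund decomposition of $f$ at the appropriate fractional height, splitting $b(x)-b(y)$ over the Whitney cubes and their dyadic dilates; the roles of $(\S\S')$, of the generalized H\"older inequality and of the power bump on $w$ are unchanged.)
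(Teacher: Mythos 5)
The paper itself contains no proof of Theorem \ref{Two4}: it is quoted from Liu and Lu \cite{liu} (the text around it only cites that reference), so there is no internal argument to compare yours against. Your outline is essentially the argument of that cited work, which in turn follows the Cruz-Uribe--P\'erez scheme \cite{cruz2}: the pointwise estimate $M^{\#}_{\delta}([b,I_\gamma]f)\leq C\|b\|_{*}\big(M_{\varepsilon}(I_\gamma f)+M_{\gamma,L\log L}f\big)$, the generalized H\"older inequality that forces the Young function $\mathcal A(t)=t^{p'}(1+\log^+t)^{p'}$, the good-$\lambda$-type lemma relating $M$ and $M^{\#}$ for arbitrary weights, and a covering argument for the Orlicz fractional maximal operator are exactly the ingredients used there; your Orlicz bookkeeping is correct, since $\mathcal A^{-1}(t)\approx t^{1/{p'}}/\log(e+t)$ and hence $t^{1/p}\cdot\mathcal A^{-1}(t)\lesssim t/\log(e+t)$, the inverse of $L\log L$. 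One step is stated too loosely: after each application of the $M$--$M^{\#}$ lemma the weight on the right-hand side is no longer $w$ but (a power-mollified) $Mw$, so the term $M_{\varepsilon}(I_\gamma f)$ cannot literally be dispatched by citing Theorem \ref{Two3} for the original pair $(w,\nu)$; one must either verify that a bumped pair such as $\big((M(w^{s}))^{1/s},\nu\big)$ with $1<s<r$ still satisfies $(\S')$ with a smaller power bump, or follow \cite{cruz2,liu} in reducing further to the fractional maximal operator via $M^{\#}_{\varepsilon}(I_\gamma f)\lesssim M_{\gamma}f$ and proving the two-weight weak bound for $M_{\gamma}$ directly. You do correctly identify the power bump $r>1$ as what allows this step to survive, so this is a gap of detail in the sketch rather than a wrong idea; filling it in requires the local Fefferman--Stein/Kolmogorov estimates for $M(w^{s})$ that \cite{cruz2} supplies.
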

In \cite{martel}, Martell considered the case when $q>p$, and gave a verifiable condition which is sufficient for the two-weight, weak type $(p,q)$ inequality for fractional integral operator $I_{\gamma}$. The condition $(\S)$ given below (in the Euclidean setting of \cite{martel}) is also simpler than the one in Theorem \ref{S}.
\begin{theorem}[\cite{martel}]\label{Two1}
Let $0<\gamma<n$ and $1<p<q<\infty$. Given a pair of weights $(w,\nu)$ on $\mathbb R^n$, suppose that for some $r>1$ and for all cubes $Q$ in $\mathbb R^n$,
\begin{equation*}
\big|Q\big|^{\gamma/n+1/q-1/p}\cdot\left(\frac{1}{|Q|}\int_Q w(x)^r\,dx\right)^{1/{(rq)}}\left(\frac{1}{|Q|}\int_Q \nu(x)^{-p'/p}\,dx\right)^{1/{p'}}\leq C<\infty.\quad(\S)
\end{equation*}
Then the fractional integral operator $I_\gamma$ satisfies the weak-type $(p,q)$ inequality
\begin{equation}\label{assump1.2}
\sigma\cdot w\big(\big\{x\in\mathbb R^n:\big|I_\gamma f(x)\big|>\sigma\big\}\big)^{1/q}
\leq C\bigg(\int_{\mathbb R^n}|f(x)|^p\nu(x)\,dx\bigg)^{1/p},\quad\mbox{for any }~\sigma>0,
\end{equation}
where $C$ does not depend on $f$ nor on $\sigma>0$.
\end{theorem}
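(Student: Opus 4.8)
The plan is to reduce the weak-type $(p,q)$ estimate to Sawyer's testing characterization. Since $1<p\le q<\infty$, Theorem~\ref{S} tells us that it suffices to deduce from $(\S)$ the testing condition $(\dag)$, i.e.\ that
\begin{equation*}
\Big(\int_{Q}\big[I_\gamma(\chi_Q w)(x)\big]^{p'}\nu(x)^{1-p'}\,dx\Big)^{1/p'}\le C\,w(Q)^{1/q'}
\end{equation*}
for all cubes $Q$, with $C$ depending only on $n,p,q,\gamma,r$ and the $(\S)$-constant. Writing $\sigma:=\nu^{1-p'}=\nu^{-p'/p}$ and $\langle h\rangle_E:=|E|^{-1}\int_E h$, the hypothesis $(\S)$ becomes $|Q|^{\gamma/n+1/q-1/p}\langle w^r\rangle_Q^{1/(rq)}\langle\sigma\rangle_Q^{1/p'}\le C$, and by Jensen's inequality I may assume in addition that $1<r<p'$.

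Next I would pass to a dyadic model. Comparing $I_\gamma$ with finitely many shifted dyadic grids gives $I_\gamma(\chi_Q w)\lesssim\sum_j I_\gamma^{\mathcal D_j}(\chi_Q w)$, with $I_\gamma^{\mathcal D}g=\sum_{R\in\mathcal D}|R|^{\gamma/n}\langle g\rangle_R\chi_R$, so it is enough to treat one grid. For that grid, a principal-cube (stopping-time) decomposition adapted to $\chi_Q w$, together with the elementary bound $\sum_{R:\,\pi(R)=S}|R|^{\gamma/n}\chi_R\lesssim|S|^{\gamma/n}\chi_S$ (a geometric series, convergent because $\gamma>0$), yields a pointwise sparse estimate $I_\gamma^{\mathcal D}(\chi_Q w)\lesssim\sum_{P\in\mathcal S}|P|^{\gamma/n}\langle\chi_Q w\rangle_P\chi_P$, where $\mathcal S$ is sparse (each $P$ has a pairwise-disjoint $E_P\subset P$ with $|E_P|\ge\frac12|P|$) and $\langle\chi_Q w\rangle$ roughly doubles down its chains. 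Since $(\nu^{1-p'})^{1-p}=\nu$, duality reduces $(\dag)$ to the sparse testing inequality
\begin{equation*}
\sum_{P\in\mathcal S}|P|^{\gamma/n}\langle\chi_Q w\rangle_P\langle h\rangle_P\,|P|\ \lesssim\ w(Q)^{1/q'}\Big(\int_Q h^p\nu\Big)^{1/p},\qquad h\ge0\ \text{supported in }Q.
\end{equation*}

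To prove this I would use $(\S)$ cube by cube. Bounding $\langle\chi_Q w\rangle_P\le\langle w^r\rangle_P^{1/r}$ and $\int_P h\le(\int_P h^p\nu)^{1/p}\sigma(P)^{1/p'}$ by Hölder's inequality, and invoking $(\S)$ on $P$, each summand is controlled by a quantity built from $|P|$, $\langle w^r\rangle_P$, $\sigma(P)$ and $\int_P h^p\nu$ alone. The overlaps of the non-disjoint family $\mathcal S$ are then handled by a corona (parallel stopping-time) decomposition adapted simultaneously to $\chi_Q w$ and to $h$: on the resulting ``flat'' cubes one sums geometrically along chains (gain $\gamma>0$), the power bump $w^r$ --- through the $L^r$-boundedness of $M$ --- absorbs the losses coming from the $\chi_Q w$-stopping, the $h$-side collapses via the sparse Carleson-embedding property, and what remains is a geometric series over the stopping levels whose ratio is $<1$ precisely because $q>p$. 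This sums to the right-hand side after raising $(\S)$ to the appropriate power and using $w(Q)\le\langle w^r\rangle_Q^{1/r}|Q|$, at which point a check using $1/q+1/q'=1$ and $p'/p=p'-1$ shows that every exponent matches.

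The main obstacle is this last step: making the $A_p$-type bump $(\S)$ interact correctly with the overlap structure of the sparse family --- the parallel stopping-time bookkeeping --- and keeping every power aligned so that the estimate closes against $w(Q)^{1/q'}$. It is here that all three hypotheses are genuinely needed: $\gamma>0$ supplies the geometric decay along chains, $r>1$ lets the $L^r$ maximal bound absorb the stopping losses, and $p<q$ supplies the strictly positive exponent making the summation over levels converge. Two routine matters --- the reduction to a single adjacent dyadic grid and, since $w$ and $\nu$ need not be compactly supported, the justification of convergence by truncating and exhausting $\mathbb R^n$ --- are dealt with in the standard way.
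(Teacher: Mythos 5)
This statement is not proved in the paper at all: it is quoted verbatim from Martell \cite{martel} and used as a black box in the proofs of Theorems \ref{mainthm:1} and \ref{mainthm:2}. So your attempt is really a re-proof of a literature result, and it has to be judged on its own completeness. The opening reduction is sound: since $1<p<q<\infty$, Theorem \ref{S} does reduce \eqref{assump1.2} to the testing condition $(\dag)$, the identification $\sigma=\nu^{1-p'}$ with $\sigma^{-p/p'}=\nu$ makes the duality step legitimate, the restriction to $1<r<p'$ by Jensen is harmless, and the passage to shifted dyadic grids and a sparse pointwise bound for $I_\gamma^{\mathcal D}(\chi_Q w)$ is standard. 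The contribution of the large cubes $P\supseteq Q$ also closes correctly against $(\S)$ (the exponents match exactly as you indicate).

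The genuine gap is the step you yourself label ``the main obstacle''. After bounding each summand by $\big(|P|\langle w^r\rangle_P^{1/r}\big)^{1/q'}\big(\int_P h^p\nu\big)^{1/p}$ via H\"older and $(\S)$, you cannot simply sum: the sparse family has unbounded overlap, so neither $\sum_P\int_P h^p\nu\lesssim\int_Q h^p\nu$ nor $\sum_P|P|\langle w^r\rangle_P^{1/r}\lesssim w(Q)$ holds in general (the latter would require an $L^1$ bound for $M_r$, which is false). The proposal's remedy --- a parallel corona decomposition in which ``the power bump absorbs the losses through the $L^r$-boundedness of $M$'' and ``a geometric series over stopping levels converges precisely because $q>p$'' --- is a description of a hoped-for bookkeeping, not an argument; no stopping criteria are specified, no Carleson-type packing estimate is proved, and no exponent computation is carried out showing the stopping-level series has ratio less than one. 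The heuristic ``because $q>p$'' is itself suspect, since the analogous statement with $q=p$ (Theorem \ref{Two3}) is true and its known proofs are delicate, so strictness of $p<q$ cannot be the sole convergence mechanism in the way claimed. As it stands, the decisive estimate closing the sparse form against $w(Q)^{1/q'}$ is missing, so the proposal is an outline rather than a proof; to complete it you would essentially have to reproduce the discretization/testing arguments of Martell \cite{martel} or Cruz-Uribe \cite{cruz3} adapted to the off-diagonal exponent $\gamma/n+1/q-1/p$.
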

Furthermore, in \cite{zhang}, Zhang sharpened Martell's result by replacing the local $L^r$ norm on the left-hand side of $(\S)$ by the smaller Orlicz space norm. On the other hand, by comparing Theorem \ref{Two1} with Theorems \ref{Two3} and \ref{Two4}, it is natural to conjecture that when $q>p$, there is a two-weight, weak type $(p,q)$ inequality for the commutator $[b,I_\gamma]$ of fractional integral operator. By using the same method as in the proof of Theorem 1 in \cite{liu} and certain Orlicz norm, we are able to obtain the following sufficient condition on a pair of weights $(w,\nu)$ to ensure the $L^p(\nu)\rightarrow WL^q(w)$ boundedness of $[b,I_\gamma]$, whenever $b$ belongs to $\mathrm{BMO}(\mathbb R^n)$. More specifically, the following statement is true.
\begin{theorem}\label{Two2}
Let $0<\gamma<n$, $1<p<q<\infty$ and $b\in\mathrm{BMO}(\mathbb R^n)$. Given a pair of weights $(w,\nu)$ on $\mathbb R^n$, suppose that for some $r>1$ and for all cubes $Q$ in $\mathbb R^n$,
\begin{equation*}
\big|Q\big|^{\gamma/n+1/q-1/p}\cdot\left(\frac{1}{|Q|}\int_Q w(x)^r\,dx\right)^{1/{(rq)}}\big\|\nu^{-1/p}\big\|_{\mathcal A,Q}\leq C<\infty,\qquad(\S\S)
\end{equation*}
where $\mathcal A(t)=t^{p'}(1+\log^+t)^{p'}$. Then the linear commutator $[b,I_\gamma]$ satisfies the weak-type $(p,q)$ inequality
\begin{equation}\label{assump2.2}
\sigma\cdot w\big(\big\{x\in\mathbb R^n:\big|[b,I_\gamma]f(x)\big|>\sigma\big\}\big)^{1/q}
\leq C\bigg(\int_{\mathbb R^n}|f(x)|^p\nu(x)\,dx\bigg)^{1/p},\quad\mbox{for any }~\sigma>0,
\end{equation}
where $C$ does not depend on $f$ nor on $\sigma>0$.
\end{theorem}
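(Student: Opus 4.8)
The plan is to adapt the Calderón--Zygmund / good-$\lambda$ argument of Liu--Lu \cite{liu} behind Theorem \ref{Two4} (the diagonal case $q=p$) by feeding into it the off-diagonal summation device of Martell \cite{martell} used for Theorem \ref{Two1}, with the power average $\|\nu^{-1/p}\|_{L^{p'},Q}$ replaced throughout by the larger Orlicz average $\|\nu^{-1/p}\|_{\mathcal A,Q}$. First I would reduce. Since $[b,I_\gamma]$ is linear in $f$, applying the asserted inequality to $f/\sigma$ shows it suffices to prove \eqref{assump2.2} for $\sigma=1$; and by a routine density/truncation argument one may assume $f\ge0$ is bounded with compact support, noting that $(\S\S)$ forces $\nu^{-p'/p}\in L^1_{\mathrm{loc}}$ (since $\mathcal A(t)\gtrsim t^{p'}$) and hence $f\in L^1_{\mathrm{loc}}(dx)$, so all cube averages below are finite. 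The core is a pointwise discretization of the commutator: splitting $b(x)-b(y)=(b(x)-b_Q)-(b(y)-b_Q)$ in the kernel of $I_\gamma$ and using the sparse domination of $I_\gamma$, one gets, for a sparse family $\mathcal S=\mathcal S(f)$ and with $\langle f\rangle_Q:=|Q|^{-1}\int_Q f$,
\[
\big|[b,I_\gamma]f(x)\big|\le C\sum_{Q\in\mathcal S}|Q|^{\gamma/n}\big|b(x)-b_Q\big|\langle f\rangle_Q\,\chi_Q(x)+C\sum_{Q\in\mathcal S}|Q|^{\gamma/n}\big\langle|b-b_Q|f\big\rangle_Q\,\chi_Q(x).
\]
By John--Nirenberg, $\|b-b_Q\|_{\exp L,Q}\lesssim\|b\|_{\ast}$, and the generalized Hölder inequality in Orlicz spaces for the pair $\big(t(1+\log^+t),\,e^{t}-1\big)$ bounds the second sum by $\|b\|_\ast\sum_{Q\in\mathcal S}|Q|^{\gamma/n}\|f\|_{L\log L,Q}\chi_Q$; the first sum is handled by the standard commutator device exploiting the exponential integrability of $b-b_Q$ on $Q$, which after summing a geometric series reduces it to the same $L\log L$ sparse sum up to a larger constant. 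Thus it suffices to prove the weak $(p,q)$ bound, with constant controlled by the left side of $(\S\S)$, for the single fractional sparse operator $\mathcal Tf:=\sum_{Q\in\mathcal S}|Q|^{\gamma/n}\|f\|_{L\log L,Q}\chi_Q$.

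For $\mathcal T$ I would argue as in the weak-type proofs behind Theorems \ref{Two3}--\ref{Two1}. A stopping-time (Calderón--Zygmund-type) decomposition adapted to $f$ yields a pairwise disjoint family $\{Q_j\}$ of cubes with $|Q_j|^{\gamma/n}\|f\|_{L\log L,Q_j}\gtrsim1$, such that, after discarding the long-range tails, $\{\mathcal Tf>1\}\subseteq\bigcup_jQ_j$; the tail contributions --- the part of $\mathcal Tf$ at $x\in Q_j$ coming from cubes $Q\supsetneq Q_j$, i.e.\ $I_\gamma$ acting on the portion of $f$ far from $Q_j$ --- are absorbed precisely using the power average $\big(\frac1{|Q|}\int_Qw^r\big)^{1/(rq)}$ from $(\S\S)$: Hölder with exponent $r$ converts $\int_{\text{annulus}}w$ into $\big(\int w^r\big)^{1/r}|\text{annulus}|^{1/r'}$ and the resulting geometric series over dyadic scales converges. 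This leaves
\[
w\big(\{x\in\mathbb R^n:\mathcal Tf(x)>1\}\big)\lesssim\sum_j\big(|Q_j|^{\gamma/n}\|f\|_{L\log L,Q_j}\big)^{q}\,w(Q_j).
\]

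To finish I would insert the weights on each $Q_j$. Hölder with exponent $r$ gives $w(Q_j)\le|Q_j|\big(\frac1{|Q_j|}\int_{Q_j}w^r\big)^{1/r}$, and writing $f=(f\nu^{1/p})\cdot\nu^{-1/p}$ and applying the generalized Hölder inequality in Orlicz spaces --- legitimate because the inverse of the Young function $t\log(e+t)$ is dominated pointwise by $t^{1/p}\cdot\mathcal A^{-1}(t)$, which is exactly the algebraic relation that singles out $\mathcal A(t)=t^{p'}(1+\log^+t)^{p'}$ --- one obtains $\|f\|_{L\log L,Q_j}\lesssim\big(\frac1{|Q_j|}\int_{Q_j}f^p\nu\big)^{1/p}\big\|\nu^{-1/p}\big\|_{\mathcal A,Q_j}$. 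Multiplying these and collecting the powers of $|Q_j|$, the exponent that appears is $q(\gamma/n+1/q-1/p)$, so that $(\S\S)$ raised to the power $q$ yields $\big(|Q_j|^{\gamma/n}\|f\|_{L\log L,Q_j}\big)^{q}w(Q_j)\lesssim C^{q}\big(\int_{Q_j}f^p\nu\big)^{q/p}$. Since $q>p$ and the $\{Q_j\}$ are pairwise disjoint, $\sum_j\big(\int_{Q_j}f^p\nu\big)^{q/p}\le\big(\sum_j\int_{Q_j}f^p\nu\big)^{q/p}\le\big(\int_{\mathbb R^n}f^p\nu\big)^{q/p}$, which gives \eqref{assump2.2} for $\sigma=1$ and hence, by the initial rescaling, for all $\sigma>0$.

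The step I expect to be the main obstacle is the second paragraph --- the weak-type reduction for the fractional sparse $L\log L$ operator. Unlike a fractional maximal operator, $\mathcal T$ is not majorized by a disjoint sum of cube averages, so the stopping-time decomposition has to be engineered so that the ``good part'' of $f$ together with the long-range tails of $I_\gamma$ (now carrying the commutator's extra logarithmic factor) are genuinely absorbed; this is exactly where the power bump $\big(\frac1{|Q|}\int_Qw^r\big)^{1/(rq)}$, rather than a bare $w$-average, is indispensable, and it is the delicate ingredient inherited from \cite{martell,cruz2,cruz3}. A secondary but unavoidable difficulty is purely book-keeping: the commutator forces $L\log L$ averages of $f$ everywhere in place of $L^1$ averages, and these can only be paired against the Orlicz bump $\mathcal A$ on $\nu^{-1/p}$ --- the power average $\|\nu^{-1/p}\|_{L^{p'},Q}$ of Theorems \ref{Two3} and \ref{Two1} would be too small --- so every Orlicz--Hölder step and every Luxemburg normalization over the cubes $Q_j$ must be carried out with care.
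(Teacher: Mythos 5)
Your plan is a genuinely different route from the one the paper intends (the paper omits the proof, stating that Theorem \ref{Two2} follows ``by the same method as in the proof of Theorem 1 in \cite{liu}'', i.e.\ the Cruz-Uribe--P\'erez weak-type scheme for commutators based on a good-$\lambda$/sharp-maximal-function inequality, combined with the off-diagonal exponent bookkeeping of \cite{martell}), but as written it has a real gap, and it sits exactly at the step you yourself flag. Two points are asserted rather than proved. First, the reduction of the sparse term $\sum_{Q}|Q|^{\gamma/n}|b(x)-b_Q|\langle f\rangle_Q\chi_Q(x)$ to ``the same $L\log L$ sparse sum'' is not a pointwise fact and cannot be one: the factor $|b(x)-b_Q|$ lives on the $x$-side, hence must be integrated against $w$, and since $w$ is only assumed to satisfy the power bump in $(\S\S)$ (no $A_\infty$, not even doubling), the John--Nirenberg exponential integrability of $b-b_Q$ is with respect to Lebesgue measure and has to be traded against the $r$-power average of $w$ (via the $\exp L$--$L\log L$ pairing and $\|w\|_{L\log L,Q}\leq C\big(\frac{1}{|Q|}\int_Q w^r\big)^{1/r}$) \emph{inside} the level-set estimate --- that is, inside precisely the argument you have not supplied. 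In all of \cite{cruz2,liu} the $b(x)$-half of the commutator is controlled on the weight side in this way; it is not absorbed into an Orlicz average of $f$.

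Second, the weak-type $(p,q)$ bound for $\mathcal T f=\sum_{Q\in\mathcal S}|Q|^{\gamma/n}\|f\|_{L\log L,Q}\chi_Q$ is the whole substance of the theorem and is left open: a sparse sum is not majorized by the corresponding fractional Orlicz maximal operator, so the inclusion $\{\mathcal Tf>1\}\subseteq\bigcup_j Q_j$ ``after discarding the long-range tails'', with the tails ``absorbed using the power average'', is exactly what \cite{cruz2,cruz3,martell,liu} prove by nontrivial means (a Fefferman--Stein-type good-$\lambda$ inequality, or Martell's dyadic two-weight argument); no such mechanism is given here, and you acknowledge this is the main obstacle. The surrounding bookkeeping is fine --- the rescaling to $\sigma=1$, the Orlicz--H\"older step $\|f\|_{L\log L,Q}\leq C\big(\frac{1}{|Q|}\int_Q f^p\nu\big)^{1/p}\big\|\nu^{-1/p}\big\|_{\mathcal A,Q}$ (the inverses are in fact comparable, so the direction issue is harmless), and the disjoint summation using $q/p>1$ --- but until the two items above are carried out the proposal is a plausible strategy, not a proof. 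A more economical fix would be to follow the intended route: run the proof of Theorem 1 in \cite{liu} verbatim, replacing its diagonal exponent arithmetic by Martell's $|Q|^{\gamma/n+1/q-1/p}$ normalization and the $1/(rq)$ power bump, which is evidently what the author has in mind.
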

The details are omitted here. Note that the condition $(\S\S)$ reduces to the condition $(\S\S')$ provided that $p=q$.

\emph{Question}. In view of Theorems \ref{Two3}, \ref{Two4}, \ref{Two1}, and \ref{Two2}, it is a natural and interesting problem to find some sufficient
conditions for which the two-weight, weak type norm inequalities hold for the operators $I_{\gamma}$ and $[b,I_\gamma]$, in the endpoint case $p=1$.

In this paper, we are mainly interested in the weighted Morrey spaces and weighted amalgam spaces. Let us recall their definitions.

\subsection{Weighted Morrey spaces}

\newtheorem{defn}{Definition}[section]
The classical Morrey space $L^{p,\lambda}(\mathbb R^n)$ was introduced by Morrey \cite{morrey} in connection with elliptic partial differential equations. Let $1\leq p<\infty$ and $0\leq\lambda\leq n$. We recall that a real-valued function $f$ is said to belong to the space $L^{p,\lambda}(\mathbb R^n)$ on the $n$-dimensional Euclidean space $\mathbb R^n$, if the following norm is finite:
\begin{equation*}
\|f\|_{L^{p,\lambda}}:=\sup_{(x,r)\in\mathbb R^n\times(0,\infty)}\bigg(r^{\lambda-n}\int_{B(x,r)}|f(y)|^p\,dy\bigg)^{1/p},
\end{equation*}
where $B(x,r)=\big\{y\in\mathbb R^n:|x-y|<r\big\}$ is the Euclidean ball with center $x\in\mathbb R^n$ and radius $r\in(0,\infty)$ as well as the Lebesgue measure $|B(x,r)|=v_n\cdot r^n$. Here $v_n$ is the volume of the unit ball of $\mathbb R^n$. In particular, one has
\begin{equation*}
L^{p,0}(\mathbb R^n)=L^\infty(\mathbb R^n) \qquad\&\qquad L^{p,n}(\mathbb R^n)=L^p(\mathbb R^n).
\end{equation*}

In \cite{komori}, Komori and Shirai considered the weighted case, and introduced a version of weighted Morrey space, which is a natural generalization of weighted Lebesgue space.
\begin{defn}
Let $1<p<\infty$ and $0\leq\kappa<1$. For two weights $w$ and $\nu$ on $\mathbb R^n$, the weighted Morrey space $L^{p,\kappa}(\nu,w)$ is defined by
\begin{equation*}
L^{p,\kappa}(\nu,w):=\Big\{f\in L^p_{loc}(\nu):\big\|f\big\|_{L^{p,\kappa}(\nu,w)}<\infty\Big\},
\end{equation*}
where the norm is given by
\begin{equation*}
\big\|f\big\|_{L^{p,\kappa}(\nu,w)}
:=\sup_{Q\subset\mathbb R^n}\bigg(\frac{1}{w(Q)^{\kappa}}\int_Q |f(x)|^p\nu(x)\,dx\bigg)^{1/p},
\end{equation*}
and the supremum is taken over all cubes $Q$ in $\mathbb R^n$.
\end{defn}

\begin{defn}
Let $1<p<\infty$, $0\leq\kappa<1$ and $w$ be a weight on $\mathbb R^n$. We define the weighted weak Morrey space $WL^{p,\kappa}(w)$ as the set of all measurable functions $f$ satisfying
\begin{equation*}
\big\|f\big\|_{WL^{p,\kappa}(w)}:=\sup_{Q\subset\mathbb R^n}\sup_{\sigma>0}\frac{1}{w(Q)^{{\kappa}/p}}\sigma
\cdot \Big[w\big(\big\{x\in Q:|f(x)|>\sigma\big\}\big)\Big]^{1/p}<\infty.
\end{equation*}
\end{defn}
By definition, it is clear that
\begin{equation*}
L^{p,0}(\nu,w)=L^p(\nu)\qquad \& \qquad WL^{p,0}(w)=WL^p(w).
\end{equation*}

\subsection{Weighted amalgam spaces}

Let $1\leq p,s\leq\infty$, a function $f\in L^p_{loc}(\mathbb R^n)$ is said to be in the Wiener amalgam space $(L^p,L^s)(\mathbb R^n)$ of $L^p(\mathbb R^n)$ and $L^s(\mathbb R^n)$, if the function $y\mapsto\|f(\cdot)\cdot\chi_{B(y,1)}\|_{L^p}$ belongs to $L^s(\mathbb R^n)$, where $B(y,1)$ is an open ball in $\mathbb R^n$ centered at $y$ with radius $1$, $\chi_{B(y,1)}$ is the characteristic function of the ball $B(y,1)$, and $\|\cdot\|_{L^p}$ is the usual Lebesgue norm in $L^p(\mathbb R^n)$. In \cite{fofana}, Fofana introduced a new class of function spaces $(L^p,L^s)^{\alpha}(\mathbb R^n)$ which turn out to be the subspaces of $(L^p,L^s)(\mathbb R^n)$. More precisely, for $1\leq p,s,\alpha\leq\infty$, we define the amalgam space $(L^p,L^s)^{\alpha}(\mathbb R^n)$ of $L^p(\mathbb R^n)$ and $L^s(\mathbb R^n)$ as the set of all measurable functions $f$ satisfying $f\in L^p_{loc}(\mathbb R^n)$ and $\big\|f\big\|_{(L^p,L^s)^{\alpha}(\mathbb R^n)}<\infty$, where
\begin{equation*}
\begin{split}
\big\|f\big\|_{(L^p,L^s)^{\alpha}(\mathbb R^n)}
:=&\sup_{r>0}\left\{\int_{\mathbb R^n}\Big[\big|B(y,r)\big|^{1/{\alpha}-1/p-1/s}\big\|f\cdot\chi_{B(y,r)}\big\|_{L^p(\mathbb R^n)}\Big]^sdy\right\}^{1/s}\\
=&\sup_{r>0}\Big\|\big|B(y,r)\big|^{1/{\alpha}-1/p-1/s}\big\|f\cdot\chi_{B(y,r)}\big\|_{L^p(\mathbb R^n)}\Big\|_{L^s(\mathbb R^n)},
\end{split}
\end{equation*}
with the usual modification when $p=\infty$ or $s=\infty$, and $|B(y,r)|$ is the Lebesgue measure of the ball $B(y,r)$. As it was shown in \cite{fofana} that the space $(L^p,L^s)^{\alpha}(\mathbb R^n)$ is non-trivial if and only if $p\leq\alpha\leq s$; thus in the remaining of this paper we will always assume that this condition $p\leq\alpha\leq s$ is satisfied. Let us consider the following two special cases:
\begin{enumerate}
  \item If we take $p=s$, then $p=\alpha=s$. By Fubini's theorem, it is easy to check that
\begin{equation*}
\begin{split}
&\Big\|\big|B(y,r)\big|^{-1/p}\big\|f\cdot\chi_{B(y,r)}\big\|_{L^p(\mathbb R^n)}\Big\|_{L^p(\mathbb R^n)}\\
=&\bigg[\int_{\mathbb R^n}\big|B(y,r)\big|^{-1}\bigg(\int_{\mathbb R^n}|f(x)|^p\cdot\chi_{B(y,r)}\,dx\bigg)\,dy\bigg]^{1/p}\\
=&\bigg[\int_{\mathbb R^n}\big|B(y,r)\big|^{-1}\bigg(\int_{B(x,r)}|f(x)|^p\,dy\bigg)\,dx\bigg]^{1/p}\\
=&\bigg(\int_{\mathbb R^n}|f(x)|^p\,dx\bigg)^{1/p},
\end{split}
\end{equation*}
where the last equality holds since $|B(y,r)|^{-1}\cdot|B(x,r)|=1$. Hence, the amalgam space $(L^p,L^s)^{\alpha}(\mathbb R^n)$ is equal to the Lebesgue space $L^p(\mathbb R^n)$ with the same norms provided that $p=\alpha=s$.
  \item If $s=\infty$, then we can see that in such a situation, the amalgam space $(L^p,L^s)^{\alpha}(\mathbb R^n)$ is equal to the classical Morrey space $L^{p,\lambda}(\mathbb R^n)$ with equivalent norms, where $\lambda={(pn)}/{\alpha}$.
\end{enumerate}

In this paper, we will consider the weighted version of $(L^p,L^s)^{\alpha}(\mathbb R^n)$.
\begin{defn}\label{amalgam}
Let $1\leq p\leq\alpha\leq s\leq\infty$, and let $\nu,w,\mu$ be three weights on $\mathbb R^n$. We denote by $(L^p,L^s)^{\alpha}(\nu,w;\mu)$ the weighted amalgam space, the space of all locally integrable functions $f$ such that
\begin{equation*}
\begin{split}
\big\|f\big\|_{(L^p,L^s)^{\alpha}(\nu,w;\mu)}
:=&\sup_{\ell>0}\left\{\int_{\mathbb R^n}\Big[w(Q(y,\ell))^{1/{\alpha}-1/p-1/s}\big\|f\cdot\chi_{Q(y,\ell)}\big\|_{L^p(\nu)}\Big]^s\mu(y)\,dy\right\}^{1/s}\\
=&\sup_{\ell>0}\Big\|w(Q(y,\ell))^{1/{\alpha}-1/p-1/s}\big\|f\cdot\chi_{Q(y,\ell)}\big\|_{L^p(\nu)}\Big\|_{L^s(\mu)}<\infty,
\end{split}
\end{equation*}
with $w(Q(y,\ell))=\displaystyle\int_{Q(y,\ell)}w(x)\,dx$ and the usual modification when $s=\infty$.
\end{defn}
\begin{defn}
Let $1\leq p\leq\alpha\leq s\leq\infty$, and let $w,\mu$ be two weights on $\mathbb R^n$. We denote by $(WL^p,L^s)^{\alpha}(w;\mu)$ the weighted weak amalgam space consisting of all measurable functions $f$ such that
\begin{equation*}
\begin{split}
\big\|f\big\|_{(WL^p,L^s)^{\alpha}(w;\mu)}
:=&\sup_{\ell>0}\left\{\int_{\mathbb R^n}\Big[w(Q(y,\ell))^{1/{\alpha}-1/p-1/s}\big\|f\cdot\chi_{Q(y,\ell)}\big\|_{WL^p(w)}\Big]^s\mu(y)\,dy\right\}^{1/s}\\
=&\sup_{\ell>0}\Big\|w(Q(y,\ell))^{1/{\alpha}-1/p-1/s}\big\|f\cdot\chi_{Q(y,\ell)}\big\|_{WL^p(w)}\Big\|_{L^s(\mu)}<\infty,
\end{split}
\end{equation*}
with $w(Q(y,\ell))=\displaystyle\int_{Q(y,\ell)}w(x)\,dx$ and the usual modification when $s=\infty$.
\end{defn}
Note that in the particular case when $\mu\equiv1$, this kind of weighted (weak) amalgam space was introduced by Feuto in \cite{feuto2} (see also \cite{feuto1}). We remark that Feuto \cite{feuto2} considered ball $B$ instead of cube $Q$ in his definition, but these two definitions are evidently equivalent. Also note that when $1\leq p\leq\alpha$ and $s=\infty$, then $(L^p,L^s)^{\alpha}(\nu,w;\mu)$ is just the weighted Morrey space $L^{p,\kappa}(\nu,w)$ with $\kappa=1-p/{\alpha}$, and $(WL^p,L^s)^{\alpha}(w;\mu)$ is just the weighted weak Morrey space $W L^{p,\kappa}(w)$ with $\kappa=1-p/{\alpha}$.

Recently, in \cite{wang1,wang3,wang4}, the author studied the two-weight, weak-type $(p,p)$ inequalities for fractional integral operator, as well as its commutators on weighted Morrey and amalgam spaces, under some $A_p$-type conditions $(\S')$ and $(\S\S')$ on the pair $(w,\nu)$. As a continuation of the works mentioned above, in this paper, we consider related problems about two-weight, weak type $(p,q)$ inequalities for $I_{\gamma}$ and $[b,I_{\gamma}]$, under some other $A_p$-type conditions $(\S)$ and $(\S\S)$ on $(w,\nu)$ and $1<p<q$.

\section{Statement of our main results}
We are now in a position to state our main results. Let $p'$ be the conjugate index of $p$ whenever $p>1$; that is, $1/p+1/{p'}=1$. First we give the two-weight, weak-type norm inequalities for the fractional integral operator in the setting of weighted Morrey and amalgam spaces.

\begin{theorem}\label{mainthm:1}
Let $0<\gamma<n$, $1<p<q<\infty$ and $0<\kappa<p/q$. Given a pair of weights $(w,\nu)$ on $\mathbb R^n$, suppose that for some $r>1$ and for all cubes $Q$ in $\mathbb R^n$,
\begin{equation*}
\big|Q\big|^{\gamma/n+1/q-1/p}\cdot\bigg(\frac{1}{|Q|}\int_Q w(x)^r\,dx\bigg)^{1/{(rq)}}\bigg(\frac{1}{|Q|}\int_Q \nu(x)^{-p'/p}\,dx\bigg)^{1/{p'}}\leq C<\infty.\quad(\S)
\end{equation*}
If $w\in\Delta_2$, then the fractional integral operator $I_\gamma$ is bounded from $L^{p,\kappa}(\nu,w)$ into $WL^{q,{(\kappa q)}/p}(w)$.
\end{theorem}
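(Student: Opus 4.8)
\emph{Proof idea.} The plan is the standard Morrey-space localization, bootstrapping from the two-weight weak-type $(p,q)$ inequality of Theorem~\ref{Two1}, whose hypothesis is exactly our condition $(\S)$. Fix a cube $Q=Q(x_0,\ell)$ and decompose $f=f_1+f_2$ with $f_1:=f\cdot\chi_{2Q}$ and $f_2:=f\cdot\chi_{\mathbb R^n\setminus 2Q}$, where $2Q=Q(x_0,2\ell)$. Since $0<1/q<1$,
\[
w\big(\{x\in Q:|I_\gamma f(x)|>\sigma\}\big)^{1/q}\le w\big(\{x\in Q:|I_\gamma f_1(x)|>\tfrac{\sigma}{2}\}\big)^{1/q}+w\big(\{x\in Q:|I_\gamma f_2(x)|>\tfrac{\sigma}{2}\}\big)^{1/q}
\]
for every $\sigma>0$; and since $\frac1q\cdot\frac{\kappa q}{p}=\frac{\kappa}{p}$, it suffices to bound $\sigma$ times each of the two terms on the right by $C\|f\|_{L^{p,\kappa}(\nu,w)}\,w(Q)^{\kappa/p}$ and then take the supremum over $\sigma>0$ and over all cubes $Q$.

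For the local part, I would apply Theorem~\ref{Two1} to $f_1$: noting $f_1\in L^p(\nu)$ and that $(\S)$ holds, \eqref{assump1.2} gives
\[
\sigma\cdot w\big(\{x\in Q:|I_\gamma f_1(x)|>\tfrac{\sigma}{2}\}\big)^{1/q}\le C\Big(\int_{2Q}|f(x)|^p\nu(x)\,dx\Big)^{1/p}\le C\|f\|_{L^{p,\kappa}(\nu,w)}\,w(2Q)^{\kappa/p},
\]
and $w\in\Delta_2$ yields $w(2Q)\le C\,w(Q)$, which disposes of this term.

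The global part rests on a pointwise bound on $Q$. For $x\in Q$, splitting $\mathbb R^n\setminus 2Q$ into the annuli $2^{k+1}Q\setminus 2^kQ$ ($k\ge1$) and using $|x-y|\gtrsim 2^k\ell$ on the $k$-th annulus gives $|I_\gamma f_2(x)|\le C\sum_{k\ge1}|2^{k+1}Q|^{\gamma/n-1}\int_{2^{k+1}Q}|f(y)|\,dy$. Hölder's inequality with exponents $p,p'$ and the Morrey norm bound the $k$-th integral by $\|f\|_{L^{p,\kappa}(\nu,w)}\,w(2^{k+1}Q)^{\kappa/p}\big(\int_{2^{k+1}Q}\nu^{-p'/p}\big)^{1/p'}$. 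Now, applying $(\S)$ on the cube $2^{k+1}Q$ to replace $\big(\frac1{|2^{k+1}Q|}\int_{2^{k+1}Q}\nu^{-p'/p}\big)^{1/p'}$ by $C\,|2^{k+1}Q|^{-\gamma/n-1/q+1/p}\big(\frac1{|2^{k+1}Q|}\int_{2^{k+1}Q}w^r\big)^{-1/(rq)}$, and then using the elementary Hölder inequality $\frac{w(2^{k+1}Q)}{|2^{k+1}Q|}\le\big(\frac1{|2^{k+1}Q|}\int_{2^{k+1}Q}w^r\big)^{1/r}$ to dominate the remaining reciprocal average, a short computation in which all powers of $|2^{k+1}Q|$ cancel (using $\tfrac1p+\tfrac1{p'}=1$) collapses the $k$-th term to $w(2^{k+1}Q)^{\kappa/p-1/q}$. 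Hence
\[
|I_\gamma f_2(x)|\le C\|f\|_{L^{p,\kappa}(\nu,w)}\sum_{k=1}^\infty w\big(2^{k+1}Q\big)^{\kappa/p-1/q},\qquad x\in Q.
\]

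Since $\kappa<p/q$ the exponent $\kappa/p-1/q$ is negative, so the series converges and is $\le C\,w(Q)^{\kappa/p-1/q}$ provided $w$ satisfies the reverse doubling estimate $w(2^{k+1}Q)\ge c\,\rho^{\,k}\,w(Q)$ for some $\rho>1$, which is a consequence of $w\in\Delta_2$. Consequently $\sup_{x\in Q}|I_\gamma f_2(x)|\le C\|f\|_{L^{p,\kappa}(\nu,w)}\,w(Q)^{\kappa/p-1/q}$, so $\sigma\cdot w(\{x\in Q:|I_\gamma f_2(x)|>\sigma/2\})^{1/q}\le 2\sup_{x\in Q}|I_\gamma f_2(x)|\cdot w(Q)^{1/q}\le C\|f\|_{L^{p,\kappa}(\nu,w)}\,w(Q)^{\kappa/p}$ for all $\sigma>0$, and combining with the local bound completes the argument. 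The step I expect to be the main obstacle is precisely this global part: bookkeeping the exponents of $|2^{k+1}Q|$ so that $(\S)$ and the trivial $L^r$-bump Hölder inequality combine to produce exactly $w(2^{k+1}Q)^{\kappa/p-1/q}$, and observing that summing the resulting tail requires not merely the upper doubling bound $w(2Q)\lesssim w(Q)$ but the reverse doubling that $\Delta_2$ also provides.
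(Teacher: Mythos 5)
Your proposal is correct and follows essentially the same route as the paper: the $f_1+f_2$ splitting on $2Q$, Theorem \ref{Two1} plus doubling for the local piece, and for the far piece the annular pointwise estimate, H\"older with exponents $p,p'$, the power-bump H\"older inequality for $w$, condition $(\S)$ on the dilated cubes, and the reverse doubling consequence of $\Delta_2$ with the exponent $1/q-\kappa/p>0$. The only (harmless) variation is that you convert the far term into an $L^\infty(Q)$ bound for $I_\gamma f_2$ and handle the level set directly, whereas the paper applies Chebyshev's inequality with the $L^q(w)$-norm; the exponent bookkeeping is identical.
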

\begin{theorem}\label{mainthm:2}
Let $0<\gamma<n$, $1<p<q<\infty$ and $\mu\in\Delta_2$. Given a pair of weights $(w,\nu)$ on $\mathbb R^n$, assume that for some $r>1$ and for all cubes $Q$ in $\mathbb R^n$,
\begin{equation*}
\big|Q\big|^{\gamma/n+1/q-1/p}\cdot\bigg(\frac{1}{|Q|}\int_Q w(x)^r\,dx\bigg)^{1/{(rq)}}\bigg(\frac{1}{|Q|}\int_Q \nu(x)^{-p'/p}\,dx\bigg)^{1/{p'}}\leq C<\infty.\quad(\S)
\end{equation*}
If $p\leq\alpha<\beta<s\leq\infty$ and $w\in \Delta_2$, then the fractional integral operator $I_{\gamma}$ is bounded from $(L^p,L^s)^{\alpha}(\nu,w;\mu)$ into $(WL^q,L^s)^{\beta}(w;\mu)$ with $1/{\beta}=1/{\alpha}-(1/p-1/q)$.
\end{theorem}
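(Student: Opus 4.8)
The plan is to run the standard ``local/global'' decomposition that reduces an estimate on a weak amalgam space to a weak-type estimate on weighted Lebesgue spaces together with a kernel-tail estimate. Fix $\ell>0$ and a cube $Q=Q(y,\ell)$, and split $f=f\chi_{2Q}+f\chi_{(2Q)^c}=:f_1+f_2$. Using that the weak-$L^q(w)$ quasi-norm is subadditive up to a multiplicative constant and that $\|g\chi_Q\|_{WL^q(w)}\le\|g\|_{WL^q(w)}$, it suffices to bound, separately and uniformly in $\ell$, the $L^s(\mu)$-norm in $y$ of $w(Q(y,\ell))^{1/\beta-1/q-1/s}\|I_\gamma f_1\|_{WL^q(w)}$ and of $w(Q(y,\ell))^{1/\beta-1/q-1/s}\|I_\gamma f_2\cdot\chi_Q\|_{WL^q(w)}$. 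Throughout I use the identity $1/\beta-1/q=1/\alpha-1/p$ (a restatement of $1/\beta=1/\alpha-(1/p-1/q)$); in particular the weight exponent $1/\beta-1/q-1/s$ equals $1/\alpha-1/p-1/s$, which is $\le0$ since $p\le\alpha$. For the local piece, Theorem \ref{Two1} applies because $(\S)$ is assumed, giving $\|I_\gamma f_1\|_{WL^q(w)}\le C\|f\chi_{2Q}\|_{L^p(\nu)}$; since the weight exponent is $\le0$ and $w\in\Delta_2$ yields $w(Q(y,\ell))\ge c\,w(Q(y,2\ell))$, one may replace $w(Q(y,\ell))$ by $w(Q(y,2\ell))$ at the cost of a constant, and the $L^s(\mu)$-norm of the resulting expression is $\le\sup_{\ell'>0}\big\|w(Q(y,\ell'))^{1/\alpha-1/p-1/s}\|f\chi_{Q(y,\ell')}\|_{L^p(\nu)}\big\|_{L^s(\mu)}=\|f\|_{(L^p,L^s)^{\alpha}(\nu,w;\mu)}$.

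For the global piece, decompose $(2Q)^c=\bigcup_{k\ge1}\big(2^{k+1}Q\setminus2^kQ\big)$. For $x\in Q$ and $z\in 2^{k+1}Q\setminus2^kQ$ one has $|x-z|\gtrsim 2^k\ell\simeq|2^{k+1}Q|^{1/n}$, whence
\[
|I_\gamma f_2(x)|\lesssim\sum_{k\ge1}|2^{k+1}Q|^{\gamma/n-1}\int_{2^{k+1}Q}|f(z)|\,dz,
\]
and by Hölder's inequality with exponents $p,p'$ the inner integral is $\le\big(\int_{2^{k+1}Q}\nu^{-p'/p}\big)^{1/p'}\|f\chi_{2^{k+1}Q}\|_{L^p(\nu)}$. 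Applying $(\S)$ on the cube $2^{k+1}Q$, and using $\big(|E|^{-1}\int_E w^r\big)^{1/(rq)}\ge\big(|E|^{-1}\int_E w\big)^{1/q}$ (Jensen), a short exponent computation gives $|2^{k+1}Q|^{\gamma/n-1}\big(\int_{2^{k+1}Q}\nu^{-p'/p}\big)^{1/p'}\lesssim w(2^{k+1}Q)^{-1/q}$. Since the resulting bound for $|I_\gamma f_2(x)|$ is constant over $x\in Q$, it controls the weak-$L^q(w)$ norm over $Q$ after multiplying by $w(Q)^{1/q}$:
\[
\big\|I_\gamma f_2\cdot\chi_Q\big\|_{WL^q(w)}\lesssim w(Q)^{1/q}\sum_{k\ge1}w(2^{k+1}Q)^{-1/q}\big\|f\chi_{2^{k+1}Q}\big\|_{L^p(\nu)}.
\]

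Now multiply by $w(Q)^{1/\beta-1/q-1/s}$, so the prefactor becomes $w(Q)^{1/\beta-1/s}$. Set $G_k(y):=w(Q(y,2^{k+1}\ell))^{1/\alpha-1/p-1/s}\|f\chi_{Q(y,2^{k+1}\ell)}\|_{L^p(\nu)}$; using $-1/q-1/\alpha+1/p+1/s=-(1/\beta-1/s)$ one checks that
\[
w(Q)^{1/\beta-1/s}\,w(2^{k+1}Q)^{-1/q}\big\|f\chi_{2^{k+1}Q}\big\|_{L^p(\nu)}=\Big(\frac{w(Q(y,\ell))}{w(Q(y,2^{k+1}\ell))}\Big)^{1/\beta-1/s}G_k(y).
\]
Here $1/\beta-1/s>0$ because $\beta<s$, and a doubling weight satisfies the reverse-doubling estimate $w(2^{k+1}Q)\ge(1+c)^k w(Q)$ with $c$ depending only on $n$ and the $\Delta_2$-constant of $w$; hence the displayed ratio is $\le\theta^k$ with $\theta:=(1+c)^{-(1/\beta-1/s)}<1$. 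Since $\|G_k\|_{L^s(\mu)}\le\|f\|_{(L^p,L^s)^{\alpha}(\nu,w;\mu)}$ (this is the defining supremum of Definition \ref{amalgam} evaluated at the single radius $2^{k+1}\ell$), Minkowski's inequality in $L^s(\mu)$ gives that the $L^s(\mu)$-norm of the global contribution is $\le\big(\sum_{k\ge1}\theta^k\big)\|f\|_{(L^p,L^s)^{\alpha}(\nu,w;\mu)}$, uniformly in $\ell$. Adding this to the local contribution and taking $\sup_{\ell>0}$ proves the theorem.

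The main obstacle, and the point requiring care, is the exponent bookkeeping in the last paragraph: one must verify that after all cancellations the surplus power of $w(2^{k+1}Q)$ is exactly $-(1/\beta-1/s)$ with $1/\beta-1/s>0$, so that the \emph{strict} inequality $\beta<s$ together with the reverse-doubling property of $\Delta_2$ weights produces genuine geometric decay in $k$ (a crude bound of the ratio by $1$ would leave a divergent series). The hypothesis $\mu\in\Delta_2$ is carried along as part of the standing assumptions, but the doubling actually exploited in the argument is that of $w$.
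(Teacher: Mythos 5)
Your proof is correct and follows essentially the same route as the paper: the same $f_1+f_2$ splitting at $2Q$, Theorem \ref{Two1} plus the identity $1/\beta-1/q-1/s=1/\alpha-1/p-1/s$ and doubling of $w$ for the local part, and for the tail the pointwise annular estimate, H\"older in $p$, condition $(\S)$ on the dilated cubes, and reverse doubling of $w\in\Delta_2$ with exponent $1/\beta-1/s>0$ to sum the series. Your use of Jensen to turn the $r$-bump into $w(2^{k+1}Q)^{-1/q}$ is just the paper's inequality \eqref{U} applied in the reverse direction, so the bookkeeping is the same.
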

Next we introduce the definition of the space of $\mathrm{BMO}(\mathbb R^n)$ (see \cite{john}). Suppose that $b\in L^1_{loc}(\mathbb R^n)$, and let
\begin{equation*}
\|b\|_*:=\sup_{Q\subset\mathbb R^n}\frac{1}{|Q|}\int_Q|b(x)-b_Q|\,dx<\infty,
\end{equation*}
where $b_Q$ denotes the mean value of $b$ on $Q$, namely,
\begin{equation*}
b_Q:=\frac{1}{|Q|}\int_Q b(y)\,dy
\end{equation*}
and the supremum is taken over all cubes $Q$ in $\mathbb R^n$. Define
\begin{equation*}
\mathrm{BMO}(\mathbb R^n):=\big\{b\in L^1_{loc}(\mathbb R^n):\|b\|_*<\infty\big\}.
\end{equation*}
If we regard two functions whose difference is a constant as one, then the space $\mathrm{BMO}(\mathbb R^n)$ is a Banach space with respect to the norm $\|\cdot\|_*$. Concerning the two-weight weak-type estimates for the linear commutator $[b,I_\gamma]$ in the context of weighted Morrey and amalgam spaces, we have the following results.
\begin{theorem}\label{mainthm:3}
Let $0<\gamma<n$, $1<p<q<\infty$, $0<\kappa<p/q$ and $b\in\mathrm{BMO}(\mathbb R^n)$. Given a pair of weights $(w,\nu)$ on $\mathbb R^n$, suppose that for some $r>1$ and for all cubes $Q$ in $\mathbb R^n$,
\begin{equation*}
\big|Q\big|^{\gamma/n+1/q-1/p}\cdot\bigg(\frac{1}{|Q|}\int_Q w(x)^r\,dx\bigg)^{1/{(rq)}}\big\|\nu^{-1/p}\big\|_{\mathcal A,Q}\leq C<\infty,\quad(\S\S)
\end{equation*}
where $\mathcal A(t)=t^{p'}(1+\log^+t)^{p'}$. If $w\in A_\infty$, then the linear commutator $[b,I_\gamma]$ is bounded from $L^{p,\kappa}(\nu,w)$ into $WL^{q,{(\kappa q)}/p}(w)$.
\end{theorem}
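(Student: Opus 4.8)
The plan is to run the familiar ``local/global'' splitting adapted to weighted Morrey spaces, using Theorem \ref{Two2} on the local piece and a dyadic annular estimate on the global piece, in which the Orlicz bump $\mathcal A$ of $(\S\S)$ and the $A_\infty$ structure of $w$ do the real work. Fix a cube $Q=Q(x_0,\ell)$ and $\sigma>0$. Since
\[
\big\|[b,I_\gamma]f\big\|_{WL^{q,(\kappa q)/p}(w)}=\sup_{Q,\,\sigma>0}\frac{1}{w(Q)^{\kappa/p}}\,\sigma\Big[w\big(\{x\in Q:|[b,I_\gamma]f(x)|>\sigma\}\big)\Big]^{1/q},
\]
it suffices to prove $\sigma\big[w(\{x\in Q:|[b,I_\gamma]f(x)|>\sigma\})\big]^{1/q}\le C\,w(Q)^{\kappa/p}\|f\|_{L^{p,\kappa}(\nu,w)}$. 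Write $f=f_1+f_2$ with $f_1=f\cdot\chi_{2Q}$ and $f_2=f\cdot\chi_{(2Q)^c}$; since $q\ge1$ it is enough to bound the level sets of $[b,I_\gamma]f_1$ and of $[b,I_\gamma]f_2$ at height $\sigma/2$ separately.

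For the local part, $(\S\S)$ is exactly the hypothesis of Theorem \ref{Two2}, so applying it to $f_1$ and restricting the level set to $Q$ gives
\[
\frac{\sigma}{2}\Big[w\big(\{x\in Q:|[b,I_\gamma]f_1(x)|>\sigma/2\}\big)\Big]^{1/q}\le C\Big(\int_{2Q}|f|^p\nu\Big)^{1/p}\le C\,w(2Q)^{\kappa/p}\|f\|_{L^{p,\kappa}(\nu,w)};
\]
since $w\in A_\infty$ is doubling, $w(2Q)\le C\,w(Q)$, and this term is acceptable.

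For the global part, for $x\in Q$ and $y\notin 2Q$ one has $|x-y|\approx|x_0-y|$, so inserting $b(x)-b(y)=(b(x)-b_Q)-(b(y)-b_Q)$ into the kernel representation \eqref{lfrac} yields the pointwise bound $|[b,I_\gamma]f_2(x)|\le C\,|b(x)-b_Q|\,H+C\,G$ for $x\in Q$, where
\[
H=\int_{(2Q)^c}\frac{|f(y)|}{|x_0-y|^{n-\gamma}}\,dy,\qquad G=\int_{(2Q)^c}\frac{|b(y)-b_Q|\,|f(y)|}{|x_0-y|^{n-\gamma}}\,dy
\]
are constants. I would estimate $H$ by decomposing $(2Q)^c=\bigcup_{k\ge1}(2^{k+1}Q\setminus 2^kQ)$, applying Hölder on each annulus with exponents $(p,p')$ and the weight pair $(\nu,\nu^{1-p'})$, using $\big(\int_{2^{k+1}Q}|f|^p\nu\big)^{1/p}\le w(2^{k+1}Q)^{\kappa/p}\|f\|_{L^{p,\kappa}(\nu,w)}$, and then invoking $(\S)$ for the cube $2^{k+1}Q$ — which follows from $(\S\S)$ since $\mathcal A(t)\ge t^{p'}$ forces $\|\nu^{-1/p}\|_{\mathcal A,Q}\ge\big(\frac1{|Q|}\int_Q\nu^{-p'/p}\big)^{1/p'}$ — together with the trivial inequality $w(2^{k+1}Q)\le|2^{k+1}Q|\big(\frac1{|2^{k+1}Q|}\int_{2^{k+1}Q}w^r\big)^{1/r}$. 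After all the powers of $|2^{k+1}Q|$ cancel, the $k$-th term is dominated by $C\,w(2^{k+1}Q)^{\kappa/p-1/q}\|f\|_{L^{p,\kappa}(\nu,w)}$; since $\kappa<p/q$ the exponent $\kappa/p-1/q$ is negative, and the reverse-doubling property of $A_\infty$ weights, $w(2^{k+1}Q)\ge c\,2^{k\delta}w(Q)$ for some $\delta>0$, makes the series converge, giving $H\le C\,w(Q)^{\kappa/p-1/q}\|f\|_{L^{p,\kappa}(\nu,w)}$. For $G$ I would, on the $k$-th annulus, further split $|b(y)-b_Q|\le|b(y)-b_{2^{k+1}Q}|+|b_{2^{k+1}Q}-b_Q|$: the second piece is $\le C(k+1)\|b\|_*$, which merely multiplies the $k$-th term of the $H$-series by $k+1$ and does not affect convergence; for the first piece I would apply Hölder with exponents $(p,p')$ and then bound $\big(\frac1{|2^{k+1}Q|}\int_{2^{k+1}Q}|b-b_{2^{k+1}Q}|^{p'}\nu^{-p'/p}\big)^{1/p'}$ by the generalized Orlicz--Hölder inequality pairing $b-b_{2^{k+1}Q}$ in the exponential class $\exp L$ (for which John--Nirenberg gives $\|b-b_{2^{k+1}Q}\|_{\exp L,\,2^{k+1}Q}\le C\|b\|_*$) with $\nu^{-1/p}$ in the class $\mathcal A$, and finally use $(\S\S)$; it is precisely this pairing that forces the power bump of $(\S)$ to be replaced by the $\mathcal A$-bump of $(\S\S)$. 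The outcome is $G\le C\,\|b\|_*\,w(Q)^{\kappa/p-1/q}\|f\|_{L^{p,\kappa}(\nu,w)}$.

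It remains to feed these into $w\big(\{x\in Q:|[b,I_\gamma]f_2(x)|>\sigma/2\}\big)$, which I split according to whether $\sigma$ is comparable with the $G$-term. If $\sigma\le C\,\|b\|_*\,w(Q)^{\kappa/p-1/q}\|f\|$, the whole cube is the worst case and $\sigma\,w(Q)^{1/q}\le C\,\|b\|_*\,w(Q)^{\kappa/p}\|f\|$ directly; otherwise the set is contained in $\{x\in Q:|b(x)-b_Q|>\lambda\}$ with $\lambda\asymp\sigma\,w(Q)^{1/q-\kappa/p}\|f\|^{-1}$, and the weighted John--Nirenberg inequality for $A_\infty$ weights, $w(\{x\in Q:|b(x)-b_Q|>\lambda\})\le C_1e^{-C_2\lambda/\|b\|_*}w(Q)$, gives $\sigma\big[w(\{\cdots\})\big]^{1/q}\le C\big(\lambda e^{-C_2\lambda/(q\|b\|_*)}\big)w(Q)^{\kappa/p}\|f\|\le C\,w(Q)^{\kappa/p}\|f\|$, because $\lambda\mapsto\lambda e^{-c\lambda}$ is bounded on $(0,\infty)$. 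Combining the local and global contributions and taking the supremum over $Q$ and $\sigma$ finishes the proof. The main obstacle — and the step that pins the hypotheses down — is the global commutator term: one must pair $\nu^{-1/p}$ against $b-b_{2^{k+1}Q}$ through the exponential / $L\log L$-type Orlicz duality, which is the reason $(\S)$ has to be strengthened to $(\S\S)$, and one must tame the $\sigma$-dependence produced by the unbounded factor $|b(x)-b_Q|$, for which a crude Chebyshev bound in $L^1(w)$ yields the wrong power of $\sigma$ and the exponential decay of $w(\{|b-b_Q|>\lambda\})$ — available because $w\in A_\infty$, not merely $w\in\Delta_2$ — is indispensable.
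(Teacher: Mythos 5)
Your proposal is correct, and for most of the argument it coincides with the paper's proof: the same splitting $f=f_1+f_2$ relative to $2Q$, Theorem \ref{Two2} plus doubling on the local piece, the same pointwise bound $|[b,I_\gamma]f_2(x)|\le C|b(x)-b_Q|H+CG$ on $Q$, the same annular H\"older estimates with \eqref{U}, the implication $(\S\S)\Rightarrow(\S)$, the O'Neil generalized H\"older pairing of $b-b_{2^{k+1}Q}$ in $\exp L$ (John--Nirenberg) against $\nu^{-1/p}$ in $\mathcal A$, and reverse doubling to sum the series (with the harmless factor $k+1$ from Lemma \ref{BMO}$(i)$). The one step where you genuinely diverge is the conversion of the unbounded factor $|b(x)-b_Q|\cdot H$ into a weak-type bound: you run a case analysis in $\sigma$ and invoke the weighted John--Nirenberg inequality $w(\{x\in Q:|b-b_Q|>\lambda\})\le C_1e^{-C_2\lambda/\|b\|_*}w(Q)$ for $w\in A_\infty$ (which indeed follows from the classical John--Nirenberg inequality combined with \eqref{compare}), together with the boundedness of $\lambda e^{-c\lambda}$; the paper instead applies Chebyshev's inequality at exponent $q$ to $\xi(x)=|b(x)-b_Q|\cdot H$ and uses Lemma \ref{BMO}$(ii)$, i.e.\ $\big(\int_Q|b-b_Q|^qw\big)^{1/q}\le C\|b\|_*w(Q)^{1/q}$, which makes the $\sigma$-bookkeeping automatic and avoids the case split. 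The two devices are essentially equivalent in strength (both encode the exponential integrability of BMO functions against $A_\infty$ weights), so your route buys nothing and costs nothing beyond a slightly longer endgame; only your closing remark that a Chebyshev bound produces ``the wrong power of $\sigma$'' should be qualified, since that is true for an $L^1(w)$ Chebyshev but not for the $L^q(w)$ Chebyshev the paper uses. One cosmetic point: in your second case the containment argument should use the lower bound $\lambda\ge c\,\sigma w(Q)^{1/q-\kappa/p}\|f\|_{L^{p,\kappa}(\nu,w)}^{-1}$ coming from the upper estimate on $H$, rather than the loose ``$\asymp$''; with that reading the estimate is exactly as you state.
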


\begin{theorem}\label{mainthm:4}
Let $0<\gamma<n$, $1<p<q<\infty$, $\mu\in\Delta_2$ and $b\in\mathrm{BMO}(\mathbb R^n)$. Given a pair of weights $(w,\nu)$ on $\mathbb R^n$, assume that for some $r>1$ and for all cubes $Q$ in $\mathbb R^n$,
\begin{equation*}
\big|Q\big|^{\gamma/n+1/q-1/p}\cdot\bigg(\frac{1}{|Q|}\int_Q w(x)^r\,dx\bigg)^{1/{(rq)}}\big\|\nu^{-1/p}\big\|_{\mathcal A,Q}\leq C<\infty,\quad(\S\S)
\end{equation*}
where $\mathcal A(t)=t^{p'}(1+\log^+t)^{p'}$. If $p\leq\alpha<\beta<s\leq\infty$ and $w\in A_\infty$, then the linear commutator $[b,I_{\gamma}]$ is bounded from $(L^p,L^s)^{\alpha}(\nu,w;\mu)$ into $(WL^q,L^s)^{\beta}(w;\mu)$ with $1/{\beta}=1/{\alpha}-(1/p-1/q)$.
\end{theorem}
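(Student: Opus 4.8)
The plan is to deduce the amalgam estimate from the weak type $(p,q)$ inequality of Theorem~\ref{Two2} on weighted Lebesgue spaces, localized over cubes, exactly as in the degenerate case $s=\infty$, which is Theorem~\ref{mainthm:3}. Put $E:=1/\beta-1/q-1/s$; since $1/\beta=1/\alpha-1/p+1/q$ we have $E=1/\alpha-1/p-1/s\le0$ and, because of the hypothesis $\beta<s$, also $E+1/q=1/\beta-1/s>0$ --- these two sign facts are what drive the whole argument. Fixing $\ell>0$ and a cube $Q=Q(y,\ell)$, I would split $f=f\chi_{2Q}+f\chi_{(2Q)^c}=:f_1+f_2$ and estimate $w(Q)^{E}\|[b,I_\gamma]f\cdot\chi_Q\|_{WL^q(w)}$ by the sum of the two corresponding pieces, then take the $L^s(\mu)$ norm in $y$ (with the obvious modification if $s=\infty$) and the supremum over $\ell$; the goal is a bound by $C(1+\|b\|_*)\|f\|_{(L^p,L^s)^\alpha(\nu,w;\mu)}$ uniform in $\ell$.

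For the local piece, Theorem~\ref{Two2} (applicable since $(\S\S)$ holds) gives $\|[b,I_\gamma]f_1\cdot\chi_Q\|_{WL^q(w)}\le\|[b,I_\gamma]f_1\|_{WL^q(w)}\le C\|f\chi_{2Q}\|_{L^p(\nu)}$. Since $w\in A_\infty$ is doubling and $E\le0$, we have $w(Q(y,\ell))^{E}\le C\,w(Q(y,2\ell))^{E}$, so after taking the $L^s(\mu)$ norm this piece is at most the amalgam norm of $f$ evaluated at scale $2\ell$, hence $\le C\|f\|_{(L^p,L^s)^\alpha(\nu,w;\mu)}$.

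For the global piece, for $x\in Q$ I would use the pointwise splitting
\[
\big|[b,I_\gamma]f_2(x)\big|\le|b(x)-b_Q|\cdot I_\gamma\big(|f|\chi_{(2Q)^c}\big)(x)+\frac{1}{\zeta(\gamma)}\int_{(2Q)^c}\frac{|b(y)-b_Q|\,|f(y)|}{|x-y|^{n-\gamma}}\,dy=:\mathrm I(x)+\mathrm{II}(x),
\]
decompose $(2Q)^c=\bigcup_{j\ge1}\big(2^{j+1}Q\setminus 2^jQ\big)$, and use $|x-y|\sim2^j\ell$ on the $j$-th annulus. For $\mathrm I$, Hölder's inequality, condition $(\S)$ (which follows from $(\S\S)$ since $\mathcal A(t)\ge t^{p'}$), and Jensen's inequality $\big(\tfrac1{|Q|}\int_Q w^r\big)^{1/r}\ge\tfrac{w(Q)}{|Q|}$ give $I_\gamma\big(|f|\chi_{(2Q)^c}\big)(x)\le C\sum_{j\ge1}w(2^{j+1}Q)^{-1/q}\|f\chi_{2^{j+1}Q}\|_{L^p(\nu)}$, and the factor $|b(x)-b_Q|$ is absorbed via the weighted John--Nirenberg inequality $\|(b-b_Q)\chi_Q\|_{L^q(w)}\le C\|b\|_*\,w(Q)^{1/q}$ ($w\in A_\infty$). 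For $\mathrm{II}$, writing $|b(y)-b_Q|\le|b(y)-b_{2^{j+1}Q}|+|b_{2^{j+1}Q}-b_Q|$ with $|b_{2^{j+1}Q}-b_Q|\le C(j+1)\|b\|_*$ and applying the generalized Hölder inequality in Orlicz spaces with $\mathcal A(t)=t^{p'}(1+\log^+t)^{p'}$, together with $\|b-b_{2^{j+1}Q}\|_{\exp L,2^{j+1}Q}\le C\|b\|_*$ and condition $(\S\S)$ --- this is exactly the computation behind Theorem~\ref{Two2} (cf. \cite{liu}) --- gives $\mathrm{II}(x)\le C\|b\|_*\sum_{j\ge1}(j+2)\,w(2^{j+1}Q)^{-1/q}\|f\chi_{2^{j+1}Q}\|_{L^p(\nu)}$. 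Since the right-hand sides are constant in $x\in Q$, combining yields
\[
w(Q)^{E}\big\|[b,I_\gamma]f_2\cdot\chi_Q\big\|_{WL^q(w)}\le C(1+\|b\|_*)\sum_{j\ge1}(j+2)\,w(Q)^{E+1/q}w(2^{j+1}Q)^{-1/q}\,\|f\chi_{2^{j+1}Q}\|_{L^p(\nu)}.
\]

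The decisive step --- and the hard part --- is the handling of the coefficient $w(Q)^{E+1/q}w(2^{j+1}Q)^{-1/q}$. Using $E+1/q=1/\beta-1/s>0$, I would rewrite it as $\big(w(Q)/w(2^{j+1}Q)\big)^{1/\beta-1/s}\cdot w(2^{j+1}Q)^{E}$ and invoke the reverse-doubling property of $A_\infty$ weights, $w(Q)/w(2^{j+1}Q)\le C\,2^{-(j+1)n\delta}$ for some $\delta>0$, which makes $\big(w(Q)/w(2^{j+1}Q)\big)^{1/\beta-1/s}\le C\,2^{-(j+1)n\delta(1/\beta-1/s)}$ \emph{decay geometrically} in $j$. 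Since $2^{j+1}Q(y,\ell)=Q(y,2^{j+1}\ell)$, the remaining factor $w\big(Q(y,2^{j+1}\ell)\big)^{E}\|f\chi_{Q(y,2^{j+1}\ell)}\|_{L^p(\nu)}$ is precisely what is controlled by $\|f\|_{(L^p,L^s)^\alpha(\nu,w;\mu)}$ at scale $2^{j+1}\ell$; so after taking the $L^s(\mu)$ norm in $y$, pulling the $j$-sum out by Minkowski's inequality, and summing $\sum_{j\ge1}(j+2)\,2^{-(j+1)n\delta(1/\beta-1/s)}<\infty$, the global piece is also $\le C(1+\|b\|_*)\|f\|_{(L^p,L^s)^\alpha(\nu,w;\mu)}$; taking the supremum over $\ell$ finishes the proof. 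I expect the only real obstacle to be this last maneuver: unlike in the Morrey case, where $w(2^{j+1}Q)^{\kappa/p}$ comes directly from the norm, here one must split off a decaying ratio, and convergence of the tail series hinges on the combination $\beta<s$ (to make the exponent $1/\beta-1/s$ positive) with the quantitative reverse-doubling of $A_\infty$ weights; the Orlicz-norm Hölder estimate for $\mathrm{II}$ is technically delicate but carries over essentially unchanged from Theorem~\ref{Two2}.
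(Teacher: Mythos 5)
Your proposal is correct and follows essentially the same route as the paper's proof: split $f$ over $2Q$, apply Theorem \ref{Two2} plus doubling to the local part, and for the tail use the annular decomposition with the splitting $|b-b_Q|\le|b-b_{2^{j+1}Q}|+|b_{2^{j+1}Q}-b_Q|$, Lemma \ref{BMO}, the $\exp L$--$L\log L$ Orlicz H\"older inequality with condition $(\S\S)$, and finally the $A_\infty$ estimate \eqref{compare} to get geometric decay of $\bigl(w(Q)/w(2^{j+1}Q)\bigr)^{1/\beta-1/s}$ beating the $(j+1)$ growth, before taking the $L^s(\mu)$ norm via Minkowski. The only cosmetic difference is that you phrase the series convergence through one unified coefficient $w(Q)^{E+1/q}w(2^{j+1}Q)^{-1/q}$, whereas the paper tracks the same ratio termwise in \eqref{J2prime} and \eqref{6}.
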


Moreover, for the extreme case $\kappa=p/q$ of Theorem \ref{mainthm:1}, we will prove the following theorem, which could be viewed as a supplement of Theorem \ref{mainthm:1}.
\begin{theorem}\label{mainthm:5}
Let $0<\gamma<n$ and $1<p<q<\infty$. Given a pair of weights $(w,\nu)$ on $\mathbb R^n$, suppose that for some $r>1$ and for all cubes $Q$ in $\mathbb R^n$,
\begin{equation*}
\big|Q\big|^{\gamma/n+1/q-1/p}\cdot\bigg(\frac{1}{|Q|}\int_Q w(x)^r\,dx\bigg)^{1/{(rq)}}\bigg(\frac{1}{|Q|}\int_Q \nu(x)^{-p'/p}\,dx\bigg)^{1/{p'}}\leq C<\infty.\quad(\S)
\end{equation*}
If $\kappa=p/q$ and $w\in\Delta_2$, then the fractional integral operator $I_\gamma$ is bounded from $L^{p,\kappa}(\nu,w)$ into $\mathrm{BMO}$.
\end{theorem}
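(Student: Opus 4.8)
The plan is to prove directly that the $\mathrm{BMO}$ seminorm $\|I_\gamma f\|_*$ is controlled by $\|f\|_{L^{p,\kappa}(\nu,w)}$, the statement being understood modulo additive constants — as will become clear, for $f$ in this space the integral defining $I_\gamma f$ need not converge absolutely, only its oscillation over cubes does. First I would fix an arbitrary cube $Q=Q(x_Q,\ell)$, perform the usual decomposition $f=f_1+f_2$ with $f_1:=f\cdot\chi_{2Q}$ and $f_2:=f\cdot\chi_{(2Q)^c}$, and choose the subtracted constant to be a finite representative of $c_Q:=I_\gamma f_2(x_Q)$ (reading $I_\gamma f_2(x)-I_\gamma f_2(x_Q)$ through the difference of kernels so that finiteness is not an issue). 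It then suffices to bound each of the two pieces
\begin{equation*}
\frac{1}{|Q|}\int_Q\big|I_\gamma f_1(x)\big|\,dx\qquad\text{and}\qquad\frac{1}{|Q|}\int_Q\big|I_\gamma f_2(x)-I_\gamma f_2(x_Q)\big|\,dx
\end{equation*}
by $C\|f\|_{L^{p,\kappa}(\nu,w)}$.

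The key preliminary estimate, valid for every cube $R$, is
\begin{equation*}
\int_R|f(y)|\,dy\le\Big(\int_R|f|^p\nu\Big)^{1/p}\Big(\int_R\nu^{-p'/p}\Big)^{1/{p'}}\le C\,\|f\|_{L^{p,\kappa}(\nu,w)}\,|R|^{1-\gamma/n}.
\end{equation*}
This follows from H\"older's inequality and the definition of the $L^{p,\kappa}$ norm, once one knows that $(\S)$ on $R$, combined with the elementary bound $w(R)/|R|\le\big(\tfrac1{|R|}\int_R w^r\big)^{1/r}$, yields $\big(\int_R\nu^{-p'/p}\big)^{1/p'}\le C\,|R|^{1-\gamma/n}\,w(R)^{-1/q}$; the two powers $w(R)^{\kappa/p}$ and $w(R)^{-1/q}$ then cancel \emph{exactly because} $\kappa=p/q$. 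This cancellation is the reason the target space is $\mathrm{BMO}$ rather than a weighted (weak) Morrey space, and is the whole point of the extreme case.

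For the local term I would use Tonelli's theorem and the size estimate $\int_Q|x-y|^{\gamma-n}\,dx\le C|Q|^{\gamma/n}$ (polar coordinates) to reduce it to $C|Q|^{\gamma/n}\int_{2Q}|f(y)|\,dy$, and then apply the preliminary estimate with $R=2Q$, obtaining the bound $C\|f\|_{L^{p,\kappa}(\nu,w)}|Q|$. For the global term I would use the standard smoothness estimate $\big||x-y|^{\gamma-n}-|x_Q-y|^{\gamma-n}\big|\le C\,\ell\,|x_Q-y|^{\gamma-n-1}$ (mean value theorem, valid since $|x-x_Q|\lesssim\ell\lesssim|x_Q-y|$ for $x\in Q$, $y\notin 2Q$), decompose $(2Q)^c$ into the dyadic shells $2^{k+1}Q\setminus 2^kQ$, and on each shell bound $\int|f|$ by the preliminary estimate with $R=2^{k+1}Q$; since $|x_Q-y|\gtrsim 2^k\ell$ there, the resulting sum $\ell\sum_{k\ge1}(2^k\ell)^{-(n-\gamma+1)}(2^{k+1}\ell)^{n-\gamma}$ is a convergent geometric series, giving $|I_\gamma f_2(x)-I_\gamma f_2(x_Q)|\le C\|f\|_{L^{p,\kappa}(\nu,w)}$ for all $x\in Q$. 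Combining the two estimates and taking the supremum over $Q$ finishes the proof.

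The only genuine obstacle is the interpretation of $I_\gamma f$ itself: the computation for the global term shows that $\sum_k(2^k\ell)^{\gamma-n}\int_{2^{k+1}Q}|f|$ diverges in general, so $I_\gamma f$ is only well defined up to an additive constant and the conclusion must be read as an estimate on $\|I_\gamma f\|_*$; one makes this rigorous via the truncated/modified Riesz kernel, as in the classical unweighted Adams-type endpoint result. The hypothesis $w\in\Delta_2$ plays only a minor role above (it ensures $w(2^kQ)<\infty$ and lets one pass freely between $w(2Q)$ and $w(Q)$), the argument using only $(\S)$ and the doubling of Lebesgue measure; alternatively one could treat the local term through the weak-type $(p,q)$ inequality of Theorem~\ref{Two1} together with a weighted Kolmogorov inequality, and it is in that route that $w\in\Delta_2$ would be used essentially.
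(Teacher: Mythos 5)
Your proposal is correct and follows essentially the same route as the paper: the same local/global splitting of $f$ near the cube, Tonelli plus the polar-coordinate bound $\int_Q|x-y|^{\gamma-n}dx\lesssim|Q|^{\gamma/n}$ for the local piece, the mean value theorem and dyadic shells for the oscillation of the far piece, and in both pieces H\"older's inequality combined with $(\S)$ and the bound $w(R)\le|R|^{1/r'}\bigl(\int_R w^r\bigr)^{1/r}$, with the exponents cancelling precisely because $\kappa=p/q$. Subtracting the constant $I_\gamma f_2(x_Q)$ instead of the average $(I_\gamma f_2)_Q$ is an immaterial variant, and your remark on the interpretation of $I_\gamma f$ modulo constants is a sensible point the paper leaves implicit.
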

In addition, we will also discuss the extreme case $\beta=s$ of Theorem \ref{mainthm:2}. In order to do so, we need to introduce the following new $\mathrm{BMO}$-type space.
\begin{defn}
Let $1\leq s\leq\infty$ and $\mu\in\Delta_2$. The space $(\mathrm{BMO},L^s)(\mu)$ is defined as the set of all locally integrable functions $f$ satisfying $\|f\|_{**}<\infty$, where
\begin{equation}\label{BMOw}
\|f\|_{**}:=\sup_{\ell>0}\bigg\|\frac{1}{|Q(y,\ell)|}\int_{Q(y,\ell)}\big|f(x)-f_{Q(y,\ell)}\big|\,dx\bigg\|_{L^s(\mu)}.
\end{equation}
Here the $L^s(\mu)$-norm is taken with respect to the variable $y$. We also use the notation $f_{Q(y,\ell)}$ to denote the mean value of $f$ on $Q(y,\ell)$.
\end{defn}
Observe that if $s=\infty$, then $(\mathrm{BMO},L^s)(\mu)$ is just the classical $\mathrm{BMO}$ space given above.

Now we can show that $I_{\gamma}$ is bounded from $(L^p,L^s)^{\alpha}(\nu,w;\mu)$ into our new $\mathrm{BMO}$-type space defined above. This new result may be viewed as a supplement of Theorem \ref{mainthm:2}.

\begin{theorem}\label{mainthm:6}
Let $0<\gamma<n$, $1<p<q<\infty$ and $\mu\in\Delta_2$. Given a pair of weights $(w,\nu)$ on $\mathbb R^n$, assume that for some $r>1$ and for all cubes $Q$ in $\mathbb R^n$,
\begin{equation*}
\big|Q\big|^{\gamma/n+1/q-1/p}\cdot\bigg(\frac{1}{|Q|}\int_Q w(x)^r\,dx\bigg)^{1/{(rq)}}\bigg(\frac{1}{|Q|}\int_Q \nu(x)^{-p'/p}\,dx\bigg)^{1/{p'}}\leq C<\infty.\quad(\S)
\end{equation*}
If $p\leq\alpha<s\leq\infty$, $1/s=1/{\alpha}-(1/p-1/q)$ and $w\in \Delta_2$, then the fractional integral operator $I_{\gamma}$ is bounded from $(L^p,L^s)^{\alpha}(\nu,w;\mu)$ into $(\mathrm{BMO},L^s)(\mu)$.
\end{theorem}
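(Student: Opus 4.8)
The plan is to reduce the statement about the amalgam–BMO space to a pointwise estimate on cube averages of $I_\gamma f - (I_\gamma f)_{Q(y,\ell)}$ and then integrate in $y$ against $\mu$, exploiting that $\beta = s$ forces the Morrey exponent at the source to sit exactly at the critical threshold $\kappa = p/q$ on every fixed ball. First I would fix a cube $Q = Q(y,\ell)$, split $f = f_1 + f_2$ with $f_1 = f\cdot\chi_{2Q}$ and $f_2 = f\cdot\chi_{(2Q)^c}$, and estimate
\begin{equation*}
\frac{1}{|Q|}\int_Q \big|I_\gamma f(x) - (I_\gamma f)_Q\big|\,dx \leq \frac{2}{|Q|}\int_Q \big|I_\gamma f_1(x)\big|\,dx + \frac{1}{|Q|}\int_Q \big|I_\gamma f_2(x) - (I_\gamma f_2)_Q\big|\,dx.
\end{equation*}
For the local term I would apply Hölder's inequality with exponent $q$ (and then dualize the weight $w$ on $Q$ using $w\in\Delta_2$), reducing it to $w(Q)^{-1/q}\|I_\gamma f_1\|_{WL^q(w)}$ up to the weak-to-average passage, and then invoke Theorem \ref{Two1} to bound $\|I_\gamma f_1\|_{WL^q(w)}$ by $\|f_1\|_{L^p(\nu)}$; the factor $w(Q)^{-1/q}\|f\chi_{2Q}\|_{L^p(\nu)}$ is then controlled by $\|f\|_{(L^p,L^s)^\alpha(\nu,w;\mu)}$ on the fiber $Q$ once one checks the exponent on $w(Q)$ matches (this is where $1/s = 1/\alpha - (1/p - 1/q)$ enters, via $1/\alpha - 1/p - 1/s = -1/q$ when $\beta = s$).

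For the global term, the standard BMO-commutator-style argument applies: for $x, x' \in Q$ one has $\big|I_\gamma f_2(x) - I_\gamma f_2(x')\big| \lesssim \sum_{k\geq 1} (2^k\ell)^{-1}(2^k\ell)^\gamma \int_{2^{k+1}Q}|f(z)|\,dz$ by the mean value estimate on the kernel, so averaging over $x' \in Q$ gives the same bound for $|I_\gamma f_2(x) - (I_\gamma f_2)_Q|$. Then on each annulus I would apply Hölder with exponents $p$ and $p'$, using the $A_p$-type condition $(\S)$ on the cube $2^{k+1}Q$ to convert $|2^{k+1}Q|^{\gamma/n - 1}\big(\int_{2^{k+1}Q}\nu^{-p'/p}\big)^{1/p'}$ into $|2^{k+1}Q|^{1/p - 1/q}\big(\frac{1}{|2^{k+1}Q|}\int w^r\big)^{-1/(rq)}$-type factors, and then pass from $\int_{2^{k+1}Q}|f|^p\nu$ to $\|f\|_{(L^p,L^s)^\alpha(\nu,w;\mu)}$-controlled quantities on the fiber $2^{k+1}Q$. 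The geometric series in $k$ converges because each term carries a negative power of $2^k$ once the dimensional bookkeeping is done — this cancellation is exactly what the choice of $s$ guarantees.

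The remaining, and I expect the main, obstacle is the passage to the $L^s(\mu)$ norm in $y$ — i.e. proving that the pointwise-in-$y$ bound we obtain, which has the schematic form $\frac{1}{|Q(y,\ell)|}\int_{Q(y,\ell)}|I_\gamma f - (I_\gamma f)_{Q(y,\ell)}| \lesssim \sum_k 2^{-k\delta}\, w(2^{k+1}Q(y,\ell))^{1/\alpha - 1/p - 1/s}\|f\chi_{2^{k+1}Q(y,\ell)}\|_{L^p(\nu)}$, actually has finite $L^s(\mu)$ norm bounded by $\|f\|_{(L^p,L^s)^\alpha(\nu,w;\mu)}$. Here one cannot simply pull the sup in $\ell$ inside; instead I would use Minkowski's integral inequality to move the $L^s(\mu)$ norm inside the sum over $k$, and for each fixed $k$ recognize $\big\|\,w(Q(y,2^{k+1}\ell))^{1/\alpha - 1/p - 1/s}\|f\chi_{Q(y,2^{k+1}\ell)}\|_{L^p(\nu)}\,\big\|_{L^s(\mu)}$ as bounded by $\|f\|_{(L^p,L^s)^\alpha(\nu,w;\mu)}$ directly from Definition \ref{amalgam} (taking the radius to be $2^{k+1}\ell$), so the $k$-sum of $2^{-k\delta}$ converges uniformly in $\ell$. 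Care is needed with the $s = \infty$ case, where the $L^s(\mu)$ norm is an essential supremum and Minkowski is trivial, and with verifying that $w\in\Delta_2$ is what lets us replace $w(2^{k+1}Q)$-powers cleanly and control the dualized-weight factor $w(Q)^{-1/q}$ arising in the local term.
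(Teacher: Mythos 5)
Your treatment of the nonlocal term and of the final passage to the $L^s(\mu)$ norm is exactly the paper's argument: the mean value estimate on the kernel gives the extra factor $2^{-j}$ on each annulus (so the series converges absolutely, with no need for reverse doubling or $A_\infty$ — note this decay comes from the oscillation $|I_\gamma f_2(x)-(I_\gamma f_2)_Q|$, not from the choice of $s$; the choice $1/s=1/\alpha-(1/p-1/q)$ only makes the exponent on $w(2^{j+1}Q)$ come out as $1/\alpha-1/p-1/s=-1/q$), then H\"older, the bump condition $(\S)$ together with $w(2^{j+1}Q)\leq|2^{j+1}Q|^{1/{r'}}\big(\int_{2^{j+1}Q}w^r\big)^{1/r}$ as in \eqref{U2}, and finally Minkowski's inequality to move the $L^s(\mu)$ norm inside the $j$-sum and recognize each summand as $\leq\|f\|_{(L^p,L^s)^{\alpha}(\nu,w;\mu)}$ from the definition with radius $2^{j+1}\sqrt{n}\ell$.

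The genuine gap is in your local term. You propose to reduce $\frac{1}{|Q|}\int_Q|I_\gamma f_1|\,dx$ to $w(Q)^{-1/q}\|I_\gamma f_1\|_{WL^q(w)}$ by ``H\"older with exponent $q$ and dualizing the weight $w$ using $w\in\Delta_2$,'' and then invoke Theorem \ref{Two1}. This step does not go through under the stated hypotheses: H\"older against $w^{1/q}w^{-1/q}$ produces the factor $\big(\int_Q w^{-q'/q}\big)^{1/{q'}}$, whose control amounts to $w\in A_q$, and it also requires the strong $L^q(w)$ norm rather than the weak one; the alternative, Kolmogorov's inequality, only bounds $w$-averages $\frac{1}{w(Q)}\int_Q|I_\gamma f_1|^t w\,dx$ with $t<q$, and converting a Lebesgue average into a $w$-average needs more than doubling (it fails for general $\Delta_2$ weights). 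Neither $A_q$ nor $A_\infty$ is assumed here — only $w\in\Delta_2$. The paper avoids the boundedness theorem entirely for this piece: since $f_1$ lives on the concentric dilate of $Q$, Fubini and the elementary estimate $\int_Q|x-z|^{\gamma-n}\,dx\leq C\ell^{\gamma}$ (see \eqref{WHI}) give $\frac{1}{|Q|}\int_Q|I_\gamma f_1|\,dx\leq C|Q|^{\gamma/n-1}\int_{4Q}|f|$, after which H\"older, the identity $1/\alpha-1/p-1/s=-1/q$, \eqref{U2} and the condition $(\S)$ yield the bound $C\,w(4Q)^{1/\alpha-1/p-1/s}\|f\chi_{4Q}\|_{L^p(\nu)}$ directly, exactly as in the proof of Theorem \ref{mainthm:5}. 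Replacing your dualization step by this direct kernel estimate repairs the argument; the rest of your outline then matches the paper. (Also, your displayed annulus bound $(2^k\ell)^{-1}(2^k\ell)^{\gamma}\int_{2^{k+1}Q}|f|$ is missing the normalizing factor $|2^{k+1}Q|^{-1}$; the correct form is $2^{-k}|2^{k+1}Q|^{\gamma/n-1}\int_{2^{k+1}Q}|f|$, which is what your later schematic formula uses.)
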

\newtheorem{remark}[theorem]{Remark}

\section{Notation and definitions}
In this section, we recall some standard definitions and notation.
\subsection{Weights}
For given $y\in\mathbb R^n$ and $\ell>0$, we denote by $Q(y,\ell)$ the cube centered at $y$ and has side length $\ell>0$, and all cubes are assumed to have their sides parallel to the coordinate axes. Given a cube $Q(y,\ell)$ and $\lambda>0$, $\lambda Q(y,\ell)$ stands for the cube concentric with $Q$ and having side length $\lambda\sqrt{n}$ times as long, i.e., $\lambda Q(y,\ell):=Q(y,\lambda\sqrt{n}\ell)$. A non-negative function $w$ defined on $\mathbb R^n$ will be called a weight if it is locally integrable. For any given weight $w$ and any Lebesgue measurable set $E$ of $\mathbb R^n$, we denote the characteristic function of $E$ by $\chi_E$, the Lebesgue measure of $E$ by $|E|$ and the weighted measure of $E$ by $w(E)$, where $w(E):=\displaystyle\int_E w(x)\,dx$. We also denote $E^c:=\mathbb R^n\backslash E$ the complement of $E$. Given a weight $w$, we say that $w$ satisfies the \emph{doubling condition}, if there exists a finite constant $C>0$ such that for any cube $Q$ in $\mathbb R^n$, we have
\begin{equation}\label{weights}
w(2Q)\leq C\cdot w(Q).
\end{equation}
When $w$ satisfies this condition \eqref{weights}, we denote $w\in\Delta_2$ for brevity. A weight $w$ is said to belong to the Muckenhoupt's class $A_p$ for $1<p<\infty$, if there exists a constant $C>0$ such that
\begin{equation*}
\left(\frac1{|Q|}\int_Q w(x)\,dx\right)^{1/p}\left(\frac1{|Q|}\int_Q w(x)^{-p'/p}\,dx\right)^{1/{p'}}\leq C
\end{equation*}
holds for every cube $Q$ in $\mathbb R^n$. The class $A_{\infty}$ is defined as the union of the $A_p$ classes for $1<p<\infty$, i.e., $A_\infty=\bigcup_{1<p<\infty}A_p$. If $w$ is an $A_{\infty}$ weight, then we have $w\in\Delta_2$ (see \cite{garcia}).
Moreover, this class $A_\infty$ is characterized as the class of all weights satisfying the following property: there exists a number $\delta>0$ and a finite constant $C>0$ such that (see \cite{garcia})
\begin{equation}\label{compare}
\frac{w(E)}{w(Q)}\le C\left(\frac{|E|}{|Q|}\right)^\delta
\end{equation}
holds for every cube $Q\subset\mathbb R^n$ and all measurable subsets $E$ of $Q$. Given a weight $w$ on $\mathbb R^n$ and for $1\leq p<\infty$, the weighted Lebesgue space $L^p(w)$ is defined to be the collection of all measurable functions $f$ satisfying
\begin{equation*}
\big\|f\big\|_{L^p(w)}:=\bigg(\int_{\mathbb R^n}|f(x)|^pw(x)\,dx\bigg)^{1/p}<\infty.
\end{equation*}
For a weight $w$ and $1\leq p<\infty$, define the distribution function of $f$ with $w$ by
\begin{equation*}
d_f(\lambda)=w\big(\big\{x\in\mathbb R^n:|f(x)|>\lambda\big\}\big),
\end{equation*}
where $\lambda$ is a positive number. We say that $f$ is in the weighted weak Lebesgue space $WL^p(w)$, if there exists a constant $C>0$ such that
\begin{equation*}
\big\|f\big\|_{WL^p(w)}:=
\sup_{\lambda>0}\lambda\cdot d_f(\lambda)^{1/p}\leq C<\infty.
\end{equation*}

\subsection{Orlicz spaces}
We next recall some basic facts from the theory of Orlicz spaces needed for the proofs of the main results. For more information about these spaces the reader may consult the book \cite{rao}. Let $\mathcal A:[0,+\infty)\rightarrow[0,+\infty)$ be a Young function. That is, a continuous, convex and strictly increasing function satisfying $\mathcal A(0)=0$ and $\mathcal A(t)\to +\infty$ as $t\to +\infty$. For a Young function $\mathcal A$ and a cube $Q$ in $\mathbb R^n$, we will consider the $\mathcal A$-average of a function $f$ given by the following Luxemburg norm:
\begin{equation*}
\big\|f\big\|_{\mathcal A,Q}
:=\inf\left\{\lambda>0:\frac{1}{|Q|}\int_Q\mathcal A\left(\frac{|f(x)|}{\lambda}\right)dx\leq1\right\}.
\end{equation*}
In particular, when $\mathcal A(t)=t^p$ with $1<p<\infty$, it is easy to see that
\begin{equation}\label{norm}
\big\|f\big\|_{\mathcal A,Q}=\left(\frac{1}{|Q|}\int_Q|f(x)|^p\,dx\right)^{1/p};
\end{equation}
that is, the Luxemburg norm in such a situation coincides with the normalized $L^p$ norm. The main examples that we are going to consider are $\mathcal A(t)=t^p(1+\log^+t)^p$ with $1<p<\infty$.

Throughout the paper $C$ always denotes a positive constant independent of the main parameters involved, but it may be different from line to line. We will use $A\approx B$ to denote the equivalence of $A$ and $B$; that is, there exist two positive constants $C_1$, $C_2$ independent of $A$, $B$ such that $C_1 A\leq B\leq C_2 A$.

\section{Proofs of Theorems \ref{mainthm:1} and \ref{mainthm:2}}

\begin{proof}[Proof of Theorem $\ref{mainthm:1}$]
Let $f\in L^{p,\kappa}(\nu,w)$ with $1<p<q<\infty$ and $0<\kappa<p/q$. For an arbitrary fixed cube $Q=Q(x_0,\ell)$ in $\mathbb R^n$, we decompose $f$ as
\begin{equation*}
\begin{cases}
f=f_1+f_2\in L^{p,\kappa}(\nu,w);\  &\\
f_1=f\cdot\chi_{2Q};\  &\\
f_2=f\cdot\chi_{(2Q)^c},
\end{cases}
\end{equation*}
where $2Q:=Q(x_0,2\sqrt{n}\ell)$ and $\chi_{2Q}$ denotes the characteristic function of $2Q$. For any given $\sigma>0$, we then write
\begin{equation*}
\begin{split}
&\frac{1}{w(Q)^{{(\kappa q)}/p\cdot 1/q}}\sigma\cdot \Big[w\big(\big\{x\in Q:\big|I_\gamma(f)(x)\big|>\sigma\big\}\big)\Big]^{1/q}\\
\leq &\frac{1}{w(Q)^{\kappa/p}}\sigma\cdot \Big[w\big(\big\{x\in Q:\big|I_\gamma(f_1)(x)\big|>\sigma/2\big\}\big)\Big]^{1/q}\\
&+\frac{1}{w(Q)^{\kappa/p}}\sigma\cdot \Big[w\big(\big\{x\in Q:\big|I_\gamma(f_2)(x)\big|>\sigma/2\big\}\big)\Big]^{1/q}\\
:=&I_1+I_2.
\end{split}
\end{equation*}
Let us consider the first term $I_1$. Using Theorem \ref{Two1} and the condition $w\in\Delta_2$, we have
\begin{equation*}
\begin{split}
I_1&\leq C\cdot\frac{1}{w(Q)^{\kappa/p}}\left(\int_{\mathbb R^n}|f_1(x)|^p\nu(x)\,dx\right)^{1/p}\\
&=C\cdot\frac{1}{w(Q)^{\kappa/p}}\left(\int_{2Q}|f(x)|^p\nu(x)\,dx\right)^{1/p}\\
&\leq C\big\|f\big\|_{L^{p,\kappa}(\nu,w)}\cdot\frac{w(2Q)^{\kappa/p}}{w(Q)^{\kappa/p}}\\
&\leq C\big\|f\big\|_{L^{p,\kappa}(\nu,w)}.
\end{split}
\end{equation*}
This is exactly what we want. We now deal with the second term $I_2$. Note that $|x-y|\approx|x_0-y|$, whenever $x,x_0\in Q$ and $y\in(2Q)^c$. For $0<\gamma<n$ and all $x\in Q$, using the standard technique, we can see that
\begin{align}\label{pointwise1}
\big|I_{\gamma}(f_2)(x)\big|&\leq C\int_{\mathbb R^n}\frac{|f_2(y)|}{|x-y|^{n-\gamma}}dy
\leq C\int_{(2Q)^c}\frac{|f(y)|}{|x_0-y|^{n-\gamma}}dy\notag\\
&=C\sum_{j=1}^\infty\int_{2^{j+1}Q\backslash 2^{j}Q}\frac{|f(y)|}{|x_0-y|^{n-\gamma}}dy\notag\\
&\leq C\sum_{j=1}^\infty\frac{1}{|2^{j+1}Q|^{1-\gamma/n}}\int_{2^{j+1}Q}|f(y)|\,dy.
\end{align}
This pointwise estimate \eqref{pointwise1} together with Chebyshev's inequality yields
\begin{equation*}
\begin{split}
I_2&\leq \frac{2}{w(Q)^{\kappa/p}}\cdot\left(\int_Q\big|I_\gamma(f_2)(x)\big|^qw(x)\,dx\right)^{1/q}\\
&\leq C\cdot w(Q)^{1/q-\kappa/p}\sum_{j=1}^\infty\frac{1}{|2^{j+1}Q|^{1-\gamma/n}}\int_{2^{j+1}Q}|f(y)|\,dy.
\end{split}
\end{equation*}
By using H\"older's inequality with exponent $p>1$, we can deduce that
\begin{equation*}
\begin{split}
I_2&\leq C\cdot w(Q)^{1/q-\kappa/p}
\sum_{j=1}^\infty\frac{1}{|2^{j+1}Q|^{1-\gamma/n}}\left(\int_{2^{j+1}Q}|f(y)|^p\nu(y)\,dy\right)^{1/p}\\
&\times\left(\int_{2^{j+1}Q}\nu(y)^{-p'/p}\,dy\right)^{1/{p'}}\\
&\leq C\big\|f\big\|_{L^{p,\kappa}(\nu,w)}\cdot w(Q)^{1/q-\kappa/p}\sum_{j=1}^\infty\frac{w(2^{j+1}Q)^{\kappa/p}}{|2^{j+1}Q|^{1-\gamma/n}}
\times\left(\int_{2^{j+1}Q}\nu(y)^{-p'/p}\,dy\right)^{1/{p'}}\\
&=C\big\|f\big\|_{L^{p,\kappa}(\nu,w)}\sum_{j=1}^\infty\frac{w(Q)^{1/q-\kappa/p}}{w(2^{j+1}Q)^{1/q-\kappa/p}}
\cdot\frac{w(2^{j+1}Q)^{1/q}}{|2^{j+1}Q|^{1-\gamma/n}}
\times\left(\int_{2^{j+1}Q}\nu(y)^{-p'/p}\,dy\right)^{1/{p'}}.
\end{split}
\end{equation*}
Moreover, we apply H\"older's inequality again with exponent $r>1$ to get
\begin{equation}\label{U}
w\big(2^{j+1}Q\big)=\int_{2^{j+1}Q}w(y)\,dy
\leq\big|2^{j+1}Q\big|^{1/{r'}}\left(\int_{2^{j+1}Q}w(y)^r\,dy\right)^{1/r}.
\end{equation}
This indicates that
\begin{equation*}
\begin{split}
I_2&\leq C\big\|f\big\|_{L^{p,\kappa}(\nu,w)}\sum_{j=1}^\infty\frac{w(Q)^{1/q-\kappa/p}}{w(2^{j+1}Q)^{1/q-\kappa/p}}
\cdot\frac{|2^{j+1}Q|^{1/{(r'q)}}}{|2^{j+1}Q|^{1-\gamma/n}}\\
&\times\left(\int_{2^{j+1}Q}w(y)^r\,dy\right)^{1/{(rq)}}\left(\int_{2^{j+1}Q}\nu(y)^{-p'/p}\,dy\right)^{1/{p'}}\\
&\leq C\big\|f\big\|_{L^{p,\kappa}(\nu,w)}
\times\sum_{j=1}^\infty\frac{w(Q)^{1/q-\kappa/p}}{w(2^{j+1}Q)^{1/q-\kappa/p}}.
\end{split}
\end{equation*}
The last inequality is obtained by the $A_p$-type condition $(\S)$ assumed on $(w,\nu)$. Furthermore, since $w\in\Delta_2$, we can easily check that there exists a \emph{reverse doubling constant }$D=D(w)>1$ independent of $Q$ such that (see \cite[Lemma 4.1]{komori})
\begin{equation*}
w(2Q)\geq D\cdot w(Q), \quad \mbox{for any cube }\,Q\subset\mathbb R^n,
\end{equation*}
which implies that for any positive integer $j$,
\begin{equation*}
w\big(2^{j+1}Q\big)\geq D^{j+1}\cdot w(Q)
\end{equation*}
by induction principle. Hence,
\begin{align}\label{C1}
\sum_{j=1}^\infty\frac{w(Q)^{1/q-\kappa/p}}{w(2^{j+1}Q)^{1/q-\kappa/p}}
&\leq\sum_{j=1}^\infty\left(\frac{w(Q)}{D^{j+1}\cdot w(Q)}\right)^{1/q-\kappa/p}\notag\\
&=\sum_{j=1}^\infty\left(\frac{1}{D^{j+1}}\right)^{1/q-\kappa/p}\notag\\
&\leq C,
\end{align}
where the last series is convergent since the \emph{reverse doubling constant} $D>1$ and $1/q-\kappa/p>0$. Therefore, in view of \eqref{C1}, we get
\begin{equation*}
I_2\leq C\big\|f\big\|_{L^{p,\kappa}(\nu,w)},
\end{equation*}
which is our desired inequality. Combining the above estimates for $I_1$ and $I_2$, and then taking the supremum over all cubes $Q\subset\mathbb R^n$ and all $\sigma>0$, we complete the proof of Theorem \ref{mainthm:1}.
\end{proof}

\begin{proof}[Proof of Theorem $\ref{mainthm:2}$]
Let $1<p\leq\alpha<s\leq\infty$ and $f\in(L^p,L^s)^{\alpha}(\nu,w;\mu)$ with $w\in\Delta_2$ and $\mu\in\Delta_2$. For an arbitrary point $y\in\mathbb R^n$, we set $Q=Q(y,\ell)$ for the cube centered at $y$ and of side length $\ell$.
Decompose $f$ as
\begin{equation*}
\begin{cases}
f=f_1+f_2\in (L^p,L^s)^{\alpha}(\nu,w;\mu);\  &\\
f_1=f\cdot\chi_{2Q};\  &\\
f_2=f\cdot\chi_{(2Q)^c},
\end{cases}
\end{equation*}
where $2Q=Q(y,2\sqrt{n}\ell)$. Then for given $y\in\mathbb R^n$ and $\ell>0$, we write
\begin{align}\label{Ip}
&w(Q(y,\ell))^{1/{\beta}-1/q-1/s}\big\|I_{\gamma}(f)\cdot\chi_{Q(y,\ell)}\big\|_{WL^q(w)}\notag\\
&\leq 2\cdot w(Q(y,\ell))^{1/{\beta}-1/q-1/s}\big\|I_{\gamma}(f_1)\cdot\chi_{Q(y,\ell)}\big\|_{WL^q(w)}\notag\\
&+2\cdot w(Q(y,\ell))^{1/{\beta}-1/q-1/s}\big\|I_{\gamma}(f_2)\cdot\chi_{Q(y,\ell)}\big\|_{WL^q(w)}\notag\\
&:=I_1(y,\ell)+I_2(y,\ell).
\end{align}
Let us consider the first term $I_1(y,\ell)$. According to Theorem \ref{Two1}, we get
\begin{align*}
I_1(y,\ell)&\leq 2\cdot w(Q(y,\ell))^{1/{\beta}-1/q-1/s}\big\|I_\gamma(f_1)\big\|_{WL^q(w)}\notag\\
&\leq C\cdot w(Q(y,\ell))^{1/{\beta}-1/q-1/s}\bigg(\int_{\mathbb R^n}|f_1(x)|^p\nu(x)\,dx\bigg)^{1/p}\notag\\
&=C\cdot w(Q(y,\ell))^{1/{\beta}-1/q-1/s}\bigg(\int_{Q(y,2\sqrt{n}\ell)}|f(x)|^p\nu(x)\,dx\bigg)^{1/p}.\notag\\
\end{align*}
Observe that
\begin{equation}\label{WI}
1/{\beta}-1/q-1/s=1/{\alpha}-1/p-1/s
\end{equation}
is valid by our assumption $1/{\beta}=1/{\alpha}-(1/p-1/q)$. This allows us to obtain
\begin{align}\label{Iyr}
I_1(y,\ell)&\leq C\cdot w(Q(y,\ell))^{1/{\alpha}-1/p-1/s}\big\|f\cdot\chi_{Q(y,2\sqrt{n}\ell)}\big\|_{L^p(\nu)}\notag\\
&=C\cdot w(Q(y,2\sqrt{n}\ell))^{1/{\alpha}-1/p-1/s}\big\|f\cdot\chi_{Q(y,2\sqrt{n}\ell)}\big\|_{L^p(\nu)}\notag\\
&\times\frac{w(Q(y,\ell))^{1/{\alpha}-1/p-1/s}}{w(Q(y,2\sqrt{n}\ell))^{1/{\alpha}-1/p-1/s}}.
\end{align}
Moreover, since $1/{\alpha}-1/p-1/s<0$ and $w\in \Delta_2$, then by doubling inequality \eqref{weights}, we get
\begin{equation}\label{doubling3}
\frac{w(Q(y,\ell))^{1/{\alpha}-1/p-1/s}}{w(Q(y,2\sqrt{n}\ell))^{1/{\alpha}-1/p-1/s}}\leq C.
\end{equation}
Substituting the above inequality \eqref{doubling3} into \eqref{Iyr}, we thus obtain
\begin{align}\label{I1yr}
I_1(y,\ell)&\leq C\cdot w(Q(y,2\sqrt{n}\ell))^{1/{\alpha}-1/p-1/s}\big\|f\cdot\chi_{Q(y,2\sqrt{n}\ell)}\big\|_{L^p(\nu)}.
\end{align}
We now estimate the second term $I_2(y,\ell)$. Recall that by the definition of $I_{\gamma}$, the following estimate holds for any $x\in Q(y,\ell)$:
\begin{equation}\label{alpha1}
\big|I_{\gamma}(f_2)(x)\big|\leq C\sum_{j=1}^\infty\frac{1}{|Q(y,2^{j+1}\sqrt{n}\ell)|^{1-{\gamma}/n}}\int_{Q(y,2^{j+1}\sqrt{n}\ell)}|f(z)|\,dz.
\end{equation}
This pointwise estimate \eqref{alpha1} along with Chebyshev's inequality implies that
\begin{equation*}
\begin{split}
I_2(y,\ell)&\leq 2\cdot w(Q(y,\ell))^{1/{\beta}-1/q-1/s}\bigg(\int_{Q(y,\ell)}\big|I_{\gamma}(f_2)(x)\big|^qw(x)\,dx\bigg)^{1/q}\\
&\leq C\cdot w(Q(y,\ell))^{1/{\beta}-1/s}
\sum_{j=1}^\infty\frac{1}{|Q(y,2^{j+1}\sqrt{n}\ell)|^{1-{\gamma}/n}}\int_{Q(y,2^{j+1}\sqrt{n}\ell)}|f(z)|\,dz.
\end{split}
\end{equation*}
A further application of H\"older's inequality yields
\begin{equation*}
\begin{split}
I_2(y,\ell)&\leq C\cdot w(Q(y,\ell))^{1/{\beta}-1/s}
\sum_{j=1}^\infty\frac{1}{|Q(y,2^{j+1}\sqrt{n}\ell)|^{1-{\gamma}/n}}\bigg(\int_{Q(y,2^{j+1}\sqrt{n}\ell)}|f(z)|^p\nu(z)\,dz\bigg)^{1/p}\\
&\times\bigg(\int_{Q(y,2^{j+1}\sqrt{n}\ell)}\nu(z)^{-p'/p}\,dz\bigg)^{1/{p'}}\\
&=C\sum_{j=1}^\infty w\big(Q(y,2^{j+1}\sqrt{n}\ell)\big)^{1/{\beta}-1/q-1/s}\big\|f\cdot\chi_{Q(y,2^{j+1}\sqrt{n}\ell)}\big\|_{L^p(\nu)}\\
&\times\frac{w(Q(y,\ell))^{1/{\beta}-1/s}}{w(Q(y,2^{j+1}\sqrt{n}\ell))^{1/{\beta}-1/s}}
\cdot\frac{w(Q(y,2^{j+1}\sqrt{n}\ell))^{1/q}}{|Q(y,2^{j+1}\sqrt{n}\ell)|^{1-{\gamma}/n}}\\
&\times\bigg(\int_{Q(y,2^{j+1}\sqrt{n}\ell)}\nu(z)^{-p'/p}\,dz\bigg)^{1/{p'}}.
\end{split}
\end{equation*}
In addition, we apply H\"older's inequality again with exponent $r>1$ to get
\begin{align}\label{U2}
w\big(Q(y,2^{j+1}\sqrt{n}\ell)\big)&=\int_{Q(y,2^{j+1}\sqrt{n}\ell)}w(z)\,dz\notag\\
&\leq\big|Q(y,2^{j+1}\sqrt{n}\ell)\big|^{1/{r'}}\bigg(\int_{Q(y,2^{j+1}\sqrt{n}\ell)}w(z)^r\,dz\bigg)^{1/r}.
\end{align}
Hence, in view of \eqref{U2} and \eqref{WI}, we have
\begin{align}\label{I2yr}
I_2(y,\ell)&\leq C\sum_{j=1}^\infty w\big(Q(y,2^{j+1}\sqrt{n}\ell)\big)^{1/{\alpha}-1/p-1/s}\big\|f\cdot\chi_{Q(y,2^{j+1}\sqrt{n}\ell)}\big\|_{L^p(\nu)}
\cdot\frac{w(Q(y,\ell))^{1/{\beta}-1/s}}{w(Q(y,2^{j+1}\sqrt{n}\ell))^{1/{\beta}-1/s}}\notag\\
&\times\frac{|Q(y,2^{j+1}\sqrt{n}\ell)|^{1/{(r'q)}}}{|Q(y,2^{j+1}\sqrt{n}\ell)|^{1-{\gamma}/n}}
\bigg(\int_{Q(y,2^{j+1}\sqrt{n}\ell)}w(z)^r\,dz\bigg)^{1/{(rq)}}
\bigg(\int_{Q(y,2^{j+1}\sqrt{n}\ell)}\nu(z)^{-p'/p}\,dz\bigg)^{1/{p'}}\notag\\
&\leq C\sum_{j=1}^\infty w\big(Q(y,2^{j+1}\sqrt{n}\ell)\big)^{1/{\alpha}-1/p-1/s}\big\|f\cdot\chi_{Q(y,2^{j+1}\sqrt{n}\ell)}\big\|_{L^p(\nu)}
\cdot\frac{w(Q(y,\ell))^{1/{\beta}-1/s}}{w(Q(y,2^{j+1}\sqrt{n}\ell))^{1/{\beta}-1/s}}.
\end{align}
The last inequality is obtained by the $A_p$-type condition $(\S)$ assumed on $(w,\nu)$. Furthermore, arguing as in the proof of Theorem \ref{mainthm:1}, we know that for any positive integer $j$, there exists a \emph{reverse doubling constant }$D=D(w)>1$ independent of $Q(y,\ell)$ such that
\begin{equation*}
w\big(Q(y,2^{j+1}\sqrt{n}\ell)\big)\geq D^{j+1}\cdot w(Q(y,\ell)).
\end{equation*}
Hence, we compute
\begin{align}\label{5}
\sum_{j=1}^\infty\frac{w(Q(y,\ell))^{1/{\beta}-1/s}}{w(Q(y,2^{j+1}\sqrt{n}\ell))^{1/{\beta}-1/s}}
&\leq \sum_{j=1}^\infty\left(\frac{w(Q(y,\ell))}{D^{j+1}\cdot w(Q(y,\ell))}\right)^{1/{\beta}-1/s}\notag\\
&=\sum_{j=1}^\infty\left(\frac{1}{D^{j+1}}\right)^{1/{\beta}-1/s}\leq C,
\end{align}
where the last series is convergent since the \emph{reverse doubling constant }$D>1$ and $1/{\beta}-1/s>0$. Therefore by taking the $L^s(\mu)$-norm of both sides of \eqref{Ip}(with respect to the variable $y$), and then using Minkowski's inequality, \eqref{I1yr} and \eqref{I2yr}, we obtain
\begin{equation*}
\begin{split}
&\Big\|w(Q(y,\ell))^{1/{\beta}-1/q-1/s}\big\|I_{\gamma}(f)\cdot\chi_{Q(y,\ell)}\big\|_{WL^q(w)}\Big\|_{L^s(\mu)}\\
&\leq\big\|I_1(y,\ell)\big\|_{L^s(\mu)}+\big\|I_2(y,\ell)\big\|_{L^s(\mu)}\\
&\leq C\Big\|w\big(Q(y,2\sqrt{n}\ell)\big)^{1/{\alpha}-1/p-1/s}\big\|f\cdot\chi_{Q(y,2\sqrt{n}\ell)}\big\|_{L^p(\nu)}\Big\|_{L^s(\mu)}\\
&+C\sum_{j=1}^\infty\Big\|w\big(Q(y,2^{j+1}\sqrt{n}\ell)\big)^{1/{\alpha}-1/p-1/s}\big\|f\cdot\chi_{Q(y,2^{j+1}\sqrt{n}\ell)}\big\|_{L^p(\nu)}\Big\|_{L^s(\mu)}
\times\frac{w(Q(y,\ell))^{1/{\beta}-1/s}}{w(Q(y,2^{j+1}\sqrt{n}\ell))^{1/{\beta}-1/s}}\\
&\leq C\big\|f\big\|_{(L^p,L^s)^{\alpha}(\nu,w;\mu)}+C\big\|f\big\|_{(L^p,L^s)^{\alpha}(\nu,w;\mu)}
\times\sum_{j=1}^\infty\frac{w(Q(y,\ell))^{1/{\beta}-1/s}}{w(Q(y,2^{j+1}\sqrt{n}\ell))^{1/{\beta}-1/s}}\\
&\leq C\big\|f\big\|_{(L^p,L^s)^{\alpha}(\nu,w;\mu)},
\end{split}
\end{equation*}
where the last inequality follows from \eqref{5}. Thus, by taking the supremum over all $\ell>0$, we finish the proof of Theorem \ref{mainthm:2}.
\end{proof}

\section{Proofs of Theorems \ref{mainthm:3} and \ref{mainthm:4}}

For the results involving commutators, we need the following properties of $\mathrm{BMO}(\mathbb R^n)$, which can be found in \cite{perez1} and \cite{wang2}.
\begin{lemma}\label{BMO}
Let $b$ be a function in $\mathrm{BMO}(\mathbb R^n)$.

$(i)$ For every cube $Q$ in $\mathbb R^n$ and for any positive integer $j$, then
\begin{equation*}
\big|b_{2^{j+1}Q}-b_Q\big|\leq C\cdot(j+1)\|b\|_*.
\end{equation*}

$(ii)$ Let $1<p<\infty$. For every cube $Q$ in $\mathbb R^n$ and for any $w\in A_{\infty}$, then
\begin{equation*}
\bigg(\int_Q\big|b(x)-b_Q\big|^pw(x)\,dx\bigg)^{1/p}\leq C\|b\|_*\cdot w(Q)^{1/p}.
\end{equation*}
\end{lemma}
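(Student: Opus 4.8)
The plan is to treat the two assertions separately; both are classical, and the cited references \cite{perez1,wang2} suffice, but a self-contained route runs as follows.

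For part $(i)$, I would argue by telescoping along the tower of cubes $Q = Q_0 \subset Q_1 \subset \cdots \subset Q_{j+1}$ with $Q_i := 2^i Q$. Consecutive cubes are nested with $|Q_{i+1}| \le c_n|Q_i|$ for a purely dimensional constant $c_n$, so for each $0 \le i \le j$,
\begin{equation*}
\big|b_{Q_{i+1}} - b_{Q_i}\big| = \bigg|\frac{1}{|Q_i|}\int_{Q_i}\big(b(x) - b_{Q_{i+1}}\big)\,dx\bigg| \le \frac{|Q_{i+1}|}{|Q_i|}\cdot\frac{1}{|Q_{i+1}|}\int_{Q_{i+1}}\big|b(x) - b_{Q_{i+1}}\big|\,dx \le c_n\|b\|_*.
\end{equation*}
Summing these $j+1$ differences and applying the triangle inequality gives $|b_{2^{j+1}Q} - b_Q| \le \sum_{i=0}^{j}|b_{Q_{i+1}} - b_{Q_i}| \le (j+1)c_n\|b\|_*$, which is the claimed bound with $C = c_n$.

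For part $(ii)$, the plan is to combine the John--Nirenberg inequality with the $A_\infty$ comparison property \eqref{compare}. By John--Nirenberg there are dimensional constants $c_1, c_2 > 0$ with $|\{x \in Q : |b(x) - b_Q| > \lambda\}| \le c_1|Q|\exp(-c_2\lambda/\|b\|_*)$ for all $\lambda > 0$. Fix $w \in A_\infty$ and let $\delta > 0$, $C_0 > 0$ be the exponent and constant in \eqref{compare}. Writing $E_\lambda := \{x \in Q : |b(x) - b_Q| > \lambda\}$, applying \eqref{compare} to $E_\lambda \subset Q$ and using both $|E_\lambda| \le |Q|$ and the John--Nirenberg bound, I would get $w(E_\lambda) \le C_0 w(Q)\min\{1,\ c_1 e^{-c_2\lambda/\|b\|_*}\}^{\delta}$. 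Then the layer-cake formula yields
\begin{equation*}
\int_Q |b(x) - b_Q|^p w(x)\,dx = p\int_0^\infty \lambda^{p-1} w(E_\lambda)\,d\lambda \le C_0\,p\,w(Q)\int_0^\infty \lambda^{p-1}\min\big\{1,\ c_1 e^{-c_2\lambda/\|b\|_*}\big\}^{\delta}\,d\lambda,
\end{equation*}
and after the substitution $\lambda = \|b\|_*\,t$ the remaining integral equals $\|b\|_*^p$ times $p\int_0^\infty t^{p-1}\min\{1,\ c_1 e^{-c_2 t}\}^{\delta}\,dt$, a finite constant depending only on $n$, $p$, $\delta$. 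Taking $p$-th roots gives the estimate.

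Neither step presents a genuine obstacle; the only care needed is bookkeeping: in $(i)$ the factor $j+1$ must be traced to the number of rungs in the telescoping chain, and in $(ii)$ one must retain the trivial bound $|E_\lambda| \le |Q|$ to control the range of small $\lambda$ where the John--Nirenberg exponential exceeds $1$ — with that in hand the $\lambda$-integral converges and scales precisely as $\|b\|_*^p$, thanks to the exponential decay for large $\lambda$.
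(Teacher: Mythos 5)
Your proof is correct: the paper itself states Lemma \ref{BMO} without proof, deferring to \cite{perez1} and \cite{wang2}, and your two arguments --- telescoping over the nested chain $Q\subset 2Q\subset\cdots\subset 2^{j+1}Q$ for $(i)$, and John--Nirenberg combined with the $A_\infty$ comparison \eqref{compare} and the layer-cake formula for $(ii)$ --- are precisely the standard proofs those references rely on. The only cosmetic point is that under the paper's dilation convention $\lambda Q(y,\ell):=Q(y,\lambda\sqrt{n}\ell)$ one has $2^0Q\neq Q$, but since you take $Q_0=Q$ directly and the first inclusion $Q\subset 2Q$ still has measure ratio $(2\sqrt{n})^n$, nothing in the argument is affected.
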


Before proving our main theorems, we will also need a generalization of H\"older's inequality due to O'Neil \cite{neil}.

\begin{lemma}\label{three}
Let $\mathcal A$, $\mathcal B$ and $\mathcal C$ be Young functions such that for all $t>0$,
\begin{equation*}
\mathcal A^{-1}(t)\cdot\mathcal B^{-1}(t)\leq\mathcal C^{-1}(t),
\end{equation*}
where $\mathcal A^{-1}(t)$ is the inverse function of $\mathcal A(t)$. Then for all functions $f$ and $g$ and all cubes $Q$ in $\mathbb R^n$,
\begin{equation*}
\big\|f\cdot g\big\|_{\mathcal C,Q}\leq 2\big\|f\big\|_{\mathcal A,Q}\big\|g\big\|_{\mathcal B,Q}.
\end{equation*}
\end{lemma}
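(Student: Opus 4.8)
The plan is to reduce the inequality to a single pointwise estimate on the Young functions and then integrate over $Q$. First I would establish the ``Young-type'' pointwise inequality
\[
\mathcal C(uv)\le\mathcal A(u)+\mathcal B(v)\qquad\text{for all }u,v\ge0,
\]
deduced from the hypothesis $\mathcal A^{-1}(t)\mathcal B^{-1}(t)\le\mathcal C^{-1}(t)$. Since each Young function considered here is a continuous, strictly increasing bijection of $[0,\infty)$ onto itself, its inverse is a bijection of the same type, so I may write $s=\mathcal A(u)$ and $t=\mathcal B(v)$, giving $u=\mathcal A^{-1}(s)$ and $v=\mathcal B^{-1}(t)$. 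By symmetry I would assume $s\le t$; then monotonicity of $\mathcal A^{-1}$ and the hypothesis evaluated at the point $t$ give
\[
uv=\mathcal A^{-1}(s)\,\mathcal B^{-1}(t)\le\mathcal A^{-1}(t)\,\mathcal B^{-1}(t)\le\mathcal C^{-1}(t)\le\mathcal C^{-1}(s+t),
\]
the last step because $\mathcal C^{-1}$ is increasing and $t\le s+t$; applying $\mathcal C$ yields $\mathcal C(uv)\le s+t=\mathcal A(u)+\mathcal B(v)$ (the cases $u=0$ or $v=0$ being trivial).

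Next, using that $\mathcal C$ is convex with $\mathcal C(0)=0$, and hence $\mathcal C(\tau/2)\le\tfrac12\mathcal C(\tau)$ for all $\tau\ge0$, I would upgrade the pointwise bound to
\[
\mathcal C\!\left(\frac{uv}{2}\right)\le\frac12\bigl(\mathcal A(u)+\mathcal B(v)\bigr),\qquad u,v\ge0.
\]
With this in hand the rest is routine. Assuming $\|f\|_{\mathcal A,Q}$ and $\|g\|_{\mathcal B,Q}$ are finite (otherwise there is nothing to prove), I would fix arbitrary $\lambda>\|f\|_{\mathcal A,Q}$ and $\mu>\|g\|_{\mathcal B,Q}$, so that by the definition of the Luxemburg norm $\frac1{|Q|}\int_Q\mathcal A(|f|/\lambda)\,dx\le1$ and $\frac1{|Q|}\int_Q\mathcal B(|g|/\mu)\,dx\le1$. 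Applying the pointwise inequality with $u=|f(x)|/\lambda$ and $v=|g(x)|/\mu$ and then averaging over $Q$,
\[
\frac1{|Q|}\int_Q\mathcal C\!\left(\frac{|f(x)g(x)|}{2\lambda\mu}\right)dx\le\frac12\left(\frac1{|Q|}\int_Q\mathcal A\!\left(\frac{|f(x)|}{\lambda}\right)dx+\frac1{|Q|}\int_Q\mathcal B\!\left(\frac{|g(x)|}{\mu}\right)dx\right)\le1,
\]
which gives $\|fg\|_{\mathcal C,Q}\le2\lambda\mu$; letting $\lambda\downarrow\|f\|_{\mathcal A,Q}$ and $\mu\downarrow\|g\|_{\mathcal B,Q}$ then yields $\|fg\|_{\mathcal C,Q}\le2\|f\|_{\mathcal A,Q}\|g\|_{\mathcal B,Q}$.

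I expect the only genuine obstacle to be the first step: passing from the multiplicative condition on the inverse functions to the additive condition $\mathcal C(uv)\le\mathcal A(u)+\mathcal B(v)$. The device that makes it work is the symmetrization ``assume $s\le t$'' together with the elementary bound $t\le s+t$ and monotonicity of $\mathcal C^{-1}$; one must also be slightly careful to use that the Young functions here are globally invertible, so that $\mathcal A^{-1},\mathcal B^{-1},\mathcal C^{-1}$ are defined on all of $[0,\infty)$. The remaining ingredients---convexity halving of $\mathcal C$ and integrating against the Luxemburg norm---are entirely standard.
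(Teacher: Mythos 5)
The paper does not prove this lemma at all: it is quoted as a known generalization of H\"older's inequality due to O'Neil, with a bare reference to \cite{neil}. Your argument is correct and is essentially the standard proof of that result --- the passage from the multiplicative hypothesis on the inverse functions to the Young-type inequality $\mathcal C(uv)\le\mathcal A(u)+\mathcal B(v)$ via the symmetrization $s\le t$ and the bound $\mathcal C^{-1}(t)\le\mathcal C^{-1}(s+t)$, the convexity halving $\mathcal C(\tau/2)\le\tfrac12\mathcal C(\tau)$, and the integration against the Luxemburg norms with the limiting step $\lambda\downarrow\|f\|_{\mathcal A,Q}$, $\mu\downarrow\|g\|_{\mathcal B,Q}$ are all sound, and you correctly use that the paper's Young functions are strictly increasing bijections of $[0,\infty)$ so that the inverses are globally defined.
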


We are now ready to give the proofs of Theorems \ref{mainthm:3} and \ref{mainthm:4}.

\begin{proof}[Proof of Theorem $\ref{mainthm:3}$]
Let $f\in L^{p,\kappa}(\nu,w)$ with $1<p<q<\infty$ and $0<\kappa<p/q$. For any given cube $Q=Q(x_0,\ell)\subset\mathbb R^n$, as before, we decompose $f$ as
\begin{equation*}
\begin{cases}
f=f_1+f_2\in L^{p,\kappa}(\nu,w);\  &\\
f_1=f\cdot\chi_{2Q};\  &\\
f_2=f\cdot\chi_{(2Q)^c},
\end{cases}
\end{equation*}
where $2Q:=Q(x_0,2\sqrt{n}\ell)$. Then for any given $\sigma>0$, one writes
\begin{equation*}
\begin{split}
&\frac{1}{w(Q)^{{(\kappa q)}/p\cdot 1/q}}\sigma\cdot \Big[w\big(\big\{x\in Q:\big|[b,I_\gamma](f)(x)\big|>\sigma\big\}\big)\Big]^{1/q}\\
\leq &\frac{1}{w(Q)^{\kappa/p}}\sigma\cdot \Big[w\big(\big\{x\in Q:\big|[b,I_\gamma](f_1)(x)\big|>\sigma/2\big\}\big)\Big]^{1/q}\\
&+\frac{1}{w(Q)^{\kappa/p}}\sigma\cdot \Big[w\big(\big\{x\in Q:\big|[b,I_\gamma](f_2)(x)\big|>\sigma/2\big\}\big)\Big]^{1/q}\\
:=&J_1+J_2.
\end{split}
\end{equation*}
Applying Theorem \ref{Two2} and doubling inequality \eqref{weights}, then we have
\begin{equation*}
\begin{split}
J_1&\leq C\cdot\frac{1}{w(Q)^{\kappa/p}}\left(\int_{\mathbb R^n}|f_1(x)|^p \nu(x)\,dx\right)^{1/p}\\
&=C\cdot\frac{1}{w(Q)^{\kappa/p}}\left(\int_{2Q}|f(x)|^p \nu(x)\,dx\right)^{1/p}\\
&\leq C\big\|f\big\|_{L^{p,\kappa}(\nu,w)}\cdot\frac{w(2Q)^{\kappa/p}}{w(Q)^{\kappa/p}}\\
&\leq C\big\|f\big\|_{L^{p,\kappa}(\nu,w)}.
\end{split}
\end{equation*}
On the other hand, for any $x\in Q$, from the definition \eqref{lfrac}, it then follows that
\begin{equation*}
\begin{split}
\big|[b,I_\gamma](f_2)(x)\big|&\leq C\int_{\mathbb R^n}\frac{|b(x)-b(y)|\cdot|f_2(y)|}{|x-y|^{n-\gamma}}dy\\
&\leq C\big|b(x)-b_{Q}\big|\cdot\int_{\mathbb R^n}\frac{|f_2(y)|}{|x-y|^{n-\gamma}}dy
+C\int_{\mathbb R^n}\frac{|b(y)-b_{Q}|\cdot|f_2(y)|}{|x-y|^{n-\gamma}}dy\\
&:=\xi(x)+\eta(x).
\end{split}
\end{equation*}
Thus, we can further split $J_2$ into two parts as follows:
\begin{equation*}
\begin{split}
J_2\leq&\frac{1}{w(Q)^{\kappa/p}}\sigma\cdot\Big[w\big(\big\{x\in Q:\xi(x)>\sigma/4\big\}\big)\Big]^{1/q}
+\frac{1}{w(Q)^{\kappa/p}}\sigma\cdot\Big[w\big(\big\{x\in Q:\eta(x)>\sigma/4\big\}\big)\Big]^{1/q}\\
:=&J_3+J_4.
\end{split}
\end{equation*}
Using the pointwise estimate \eqref{pointwise1} and Chebyshev's inequality, we obtain that
\begin{equation*}
\begin{split}
J_3&\leq\frac{4}{w(Q)^{\kappa/p}}\cdot\left(\int_Q\big|\xi(x)\big|^qw(x)\,dx\right)^{1/q}\\
&\leq\frac{C}{w(Q)^{\kappa/p}}\cdot\left(\int_Q \big|b(x)-b_{Q}\big|^qw(x)\,dx\right)^{1/q}
\sum_{j=1}^\infty\frac{1}{|2^{j+1}Q|^{1-\gamma/n}}\int_{2^{j+1}Q}|f(y)|\,dy\\
&\leq C\|b\|_*\cdot w(Q)^{1/q-\kappa/p}
\sum_{j=1}^\infty\frac{1}{|2^{j+1}Q|^{1-\gamma/n}}\int_{2^{j+1}Q}|f(y)|\,dy,
\end{split}
\end{equation*}
where the last inequality is due to the assumption $w\in A_{\infty}$ and Lemma \ref{BMO} $(ii)$. By the same manner as in the proof of Theorem \ref{mainthm:1}, we can also show that
\begin{equation*}
J_3\leq C\big\|f\big\|_{L^{p,\kappa}(\nu,w)}.
\end{equation*}
Similar to the proof of \eqref{pointwise1}, for all $x\in Q$, we can show the following pointwise inequality as well.
\begin{align}\label{pointwise2}
\big|\eta(x)\big|
&\leq C\int_{(2Q)^c}\frac{|b(y)-b_{Q}|\cdot|f(y)|}{|x_0-y|^{n-\gamma}}dy\notag\\
&\leq C\sum_{j=1}^\infty\frac{1}{|2^{j+1}Q|^{1-\gamma/n}}\int_{2^{j+1}Q}\big|b(y)-b_{Q}\big|\cdot\big|f(y)\big|\,dy.
\end{align}
This, together with Chebyshev's inequality, yields
\begin{equation*}
\begin{split}
J_4&\leq\frac{4}{w(Q)^{\kappa/p}}\cdot\left(\int_Q\big|\eta(x)\big|^qw(x)\,dx\right)^{1/q}\\
&\leq C\cdot w(Q)^{1/q-\kappa/p}
\sum_{j=1}^\infty\frac{1}{|2^{j+1}Q|^{1-\gamma/n}}\int_{2^{j+1}Q}\big|b(y)-b_{Q}\big|\cdot\big|f(y)\big|\,dy\\
&\leq C\cdot w(Q)^{1/q-\kappa/p}
\sum_{j=1}^\infty\frac{1}{|2^{j+1}Q|^{1-\gamma/n}}\int_{2^{j+1}Q}\big|b(y)-b_{{2^{j+1}Q}}\big|\cdot\big|f(y)\big|\,dy\\
&+C\cdot w(Q)^{1/q-\kappa/p}
\sum_{j=1}^\infty\frac{1}{|2^{j+1}Q|^{1-\gamma/n}}\int_{2^{j+1}Q}\big|b_{{2^{j+1}Q}}-b_{Q}\big|\cdot\big|f(y)\big|\,dy\\
&:=J_5+J_6.
\end{split}
\end{equation*}
An application of H\"older's inequality leads to that
\begin{equation*}
\begin{split}
J_5&\leq C\cdot w(Q)^{1/q-\kappa/p}\sum_{j=1}^\infty\frac{1}{|2^{j+1}Q|^{1-\gamma/n}}
\left(\int_{2^{j+1}Q}|f(y)|^p\nu(y)\,dy\right)^{1/p}\\
&\times\left(\int_{2^{j+1}Q}\big|b(y)-b_{{2^{j+1}Q}}\big|^{p'}\nu(y)^{-p'/p}\,dy\right)^{1/{p'}}\\
\end{split}
\end{equation*}
\begin{equation*}
\begin{split}
&\leq C\big\|f\big\|_{L^{p,\kappa}(\nu,w)}\cdot w(Q)^{1/q-\kappa/p}\sum_{j=1}^\infty\frac{w(2^{j+1}Q)^{\kappa/p}}{|2^{j+1}Q|^{1-\gamma/n}}\\
&\times\big|2^{j+1}Q\big|^{1/{p'}}\Big\|\big[b-b_{{2^{j+1}Q}}\big]\cdot \nu^{-1/p}\Big\|_{\mathcal C,2^{j+1}Q},
\end{split}
\end{equation*}
where $\mathcal C(t)=t^{p'}$ is a Young function by \eqref{norm}. For $1<p<\infty$, it is immediate that the inverse function of $\mathcal C(t)$ is $\mathcal C^{-1}(t)=t^{1/{p'}}$. Also observe that the following identity is true:
\begin{equation*}
\begin{split}
\mathcal C^{-1}(t)=t^{1/{p'}}&=\frac{t^{1/{p'}}}{1+\log^+t}\cdot\big(1+\log^+t\big)\\
&:=\mathcal A^{-1}(t)\cdot\mathcal B^{-1}(t).
\end{split}
\end{equation*}
Moreover, it is easy to see that
\begin{equation*}
\mathcal A(t)\approx  t^{p'}(1+\log^+t)^{p'}\qquad \&\qquad \mathcal B(t)\approx \exp(t)-1.
\end{equation*}
Let $\|h\|_{\exp L,Q}$ denote the mean Luxemburg norm of $h$ on cube $Q$ with Young function $\mathcal B(t)\approx \exp(t)-1$. According to Lemma \ref{three}, we thus have
\begin{align}\label{key}
\Big\|\big[b-b_{{2^{j+1}Q}}\big]\cdot \nu^{-1/p}\Big\|_{\mathcal C,2^{j+1}Q}
&\leq C\big\|b-b_{{2^{j+1}Q}}\big\|_{\exp L,2^{j+1}Q}\cdot\big\|\nu^{-1/p}\big\|_{\mathcal A,2^{j+1}Q}\notag\\
&\leq C\|b\|_*\cdot\big\|\nu^{-1/p}\big\|_{\mathcal A,2^{j+1}Q},
\end{align}
where in the last inequality we have used the well-known fact that (see \cite{perez1})
\begin{equation}\label{Jensen}
\big\|b-b_{Q}\big\|_{\exp L,Q}\leq C\|b\|_*,\qquad \mbox{for any cube }Q\subset\mathbb R^n.
\end{equation}
Indeed, the above inequality \eqref{Jensen} is equivalent to the following inequality
\begin{equation*}
\frac{1}{|Q|}\int_Q\exp\bigg(\frac{|b(y)-b_Q|}{c_0\|b\|_*}\bigg)\,dy\leq C,\qquad \mbox{for any cube }Q\subset\mathbb R^n,
\end{equation*}
which is an immediate consequence of the celebrated John--Nirenberg's inequality (see \cite{john}). Consequently, in view of \eqref{key} and \eqref{U}, we can deduce that
\begin{equation*}
\begin{split}
J_5&\leq C\big\|f\big\|_{L^{p,\kappa}(\nu,w)}
\sum_{j=1}^\infty\frac{w(Q)^{1/q-\kappa/p}}{w(2^{j+1}Q)^{1/q-\kappa/p}}
\cdot\frac{w(2^{j+1}Q)^{1/q}}{|2^{j+1}Q|^{1/p-\gamma/n}}\cdot\|b\|_*\big\|\nu^{-1/p}\big\|_{\mathcal A,2^{j+1}Q}\\
&\leq C\|b\|_*\big\|f\big\|_{L^{p,\kappa}(\nu,w)}\sum_{j=1}^\infty\frac{w(Q)^{1/q-\kappa/p}}{w(2^{j+1}Q)^{1/q-\kappa/p}}\\
&\times\big|2^{j+1}Q\big|^{\gamma/n+1/q-1/p}\cdot
\left(\frac{1}{|2^{j+1}Q|}\int_{2^{j+1}Q}w(x)^r\,dx\right)^{1/{(rq)}}\cdot\big\|\nu^{-1/p}\big\|_{\mathcal A,2^{j+1}Q}.\\
\end{split}
\end{equation*}
Since $w\in A_\infty$, we know that $w\in\Delta_2$. Furthermore, by the $A_p$-type condition $(\S\S)$ on $(w,\nu)$ and the estimate \eqref{C1}, we obtain
\begin{equation*}
\begin{split}
J_5&\leq C\big\|f\big\|_{L^{p,\kappa}(\nu,w)}\times\sum_{j=1}^\infty\frac{w(Q)^{1/q-\kappa/p}}{w(2^{j+1}Q)^{1/q-\kappa/p}}\\
&\leq C\big\|f\big\|_{L^{p,\kappa}(\nu,w)}.
\end{split}
\end{equation*}
It remains to estimate the last term $J_6$. Making use of the first part of Lemma \ref{BMO} and H\"older's inequality, we get
\begin{equation*}
\begin{split}
J_6&\leq C\cdot w(Q)^{1/q-\kappa/p}
\sum_{j=1}^\infty\frac{(j+1)\|b\|_*}{|2^{j+1}Q|^{1-\gamma/n}}\int_{2^{j+1}Q}|f(y)|\,dy\\
&\leq C\cdot w(Q)^{1/q-\kappa/p}
\sum_{j=1}^\infty\frac{(j+1)\|b\|_*}{|2^{j+1}Q|^{1-\gamma/n}}
\left(\int_{2^{j+1}Q}|f(y)|^p\nu(y)\,dy\right)^{1/p}\left(\int_{2^{j+1}Q}\nu(y)^{-p'/p}\,dy\right)^{1/{p'}}\\
&\leq C\big\|f\big\|_{L^{p,\kappa}(\nu,w)}\cdot w(Q)^{1/q-\kappa/p}
\sum_{j=1}^\infty(j+1)\cdot\frac{w(2^{j+1}Q)^{\kappa/p}}{|2^{j+1}Q|^{1-\gamma/n}}\left(\int_{2^{j+1}Q}\nu(y)^{-p'/p}\,dy\right)^{1/{p'}}\\
&=C\big\|f\big\|_{L^{p,\kappa}(\nu,w)}\sum_{j=1}^\infty(j+1)\cdot
\frac{w(Q)^{1/q-\kappa/p}}{w(2^{j+1}Q)^{1/q-\kappa/p}}\cdot\frac{w(2^{j+1}Q)^{1/q}}{|2^{j+1}Q|^{1-\gamma/n}}
\left(\int_{2^{j+1}Q}\nu(y)^{-p'/p}\,dy\right)^{1/{p'}}.
\end{split}
\end{equation*}
Let $\mathcal C(t)$ and $\mathcal A(t)$ be the same as before. Clearly, $\mathcal C(t)\leq\mathcal A(t)$ for all $t>0$, then for any cube $Q$ in $\mathbb R^n$, one has $\big\|f\big\|_{\mathcal C,Q}\leq\big\|f\big\|_{\mathcal A,Q}$ by definition, which implies that the condition $(\S\S)$ is stronger than the condition $(\S)$. This fact together with \eqref{U} yields
\begin{equation*}
\begin{split}
J_6&\leq C\big\|f\big\|_{L^{p,\kappa}(\nu,w)}\sum_{j=1}^\infty(j+1)\cdot
\frac{w(Q)^{1/q-\kappa/p}}{w(2^{j+1}Q)^{1/q-\kappa/p}}\cdot\frac{|2^{j+1}Q|^{1/{(r'q)}}}{|2^{j+1}Q|^{1-\gamma/n}}\\
&\times\left(\int_{2^{j+1}Q}w(y)^r\,dy\right)^{1/{(rq)}}\left(\int_{2^{j+1}Q}\nu(y)^{-p'/p}\,dy\right)^{1/{p'}}\\
&\leq C\big\|f\big\|_{L^{p,\kappa}(\nu,w)}
\sum_{j=1}^\infty(j+1)\cdot\frac{w(Q)^{1/q-\kappa/p}}{w(2^{j+1}Q)^{1/q-\kappa/p}}.
\end{split}
\end{equation*}
Moreover, by our hypothesis on $w:w\in A_\infty$, then there exists a number $\delta>0$ such that the inequality (\ref{compare}) holds, and hence we compute
\begin{align}\label{C2}
\sum_{j=1}^\infty(j+1)\cdot\frac{w(Q)^{1/q-\kappa/p}}{w(2^{j+1}Q)^{1/q-\kappa/p}}
&\leq C\sum_{j=1}^\infty(j+1)\cdot\left(\frac{|Q|}{|2^{j+1}Q|}\right)^{\delta(1/q-\kappa/p)}\notag\\
&=C\sum_{j=1}^\infty(j+1)\cdot\left(\frac{1}{2^{(j+1)n}}\right)^{\delta(1/q-\kappa/p)}\notag\\
&\leq C,
\end{align}
where the last series is convergent since the exponent $\delta(1/q-\kappa/p)$ is positive. This implies our desired estimate
\begin{equation*}
J_6\leq C\big\|f\big\|_{L^{p,\kappa}(\nu,w)}.
\end{equation*}
Summarizing the estimates derived above, and then taking the supremum over all cubes $Q\subset\mathbb R^n$ and all $\sigma>0$, we conclude the proof of Theorem \ref{mainthm:3}.
\end{proof}

\begin{proof}[Proof of Theorem $\ref{mainthm:4}$]
Let $1<p\leq\alpha<s\leq\infty$ and $f\in(L^p,L^s)^{\alpha}(\nu,w;\mu)$ with $w\in A_\infty$ and $\mu\in\Delta_2$. For any fixed cube $Q=Q(y,\ell)$ in $\mathbb R^n$, as usual, we decompose $f$ as
\begin{equation*}
\begin{cases}
f=f_1+f_2\in (L^p,L^s)^{\alpha}(\nu,w;\mu);\  &\\
f_1=f\cdot\chi_{2Q};\  &\\
f_2=f\cdot\chi_{(2Q)^c},
\end{cases}
\end{equation*}
where $2Q=Q(y,2\sqrt{n}\ell)$. Then for given $y\in\mathbb R^n$ and $\ell>0$, we write
\begin{align}\label{Jprime}
&w(Q(y,\ell))^{1/{\beta}-1/q-1/s}\big\|[b,I_{\gamma}](f)\cdot\chi_{Q(y,\ell)}\big\|_{WL^q(w)}\notag\\
&\leq 2\cdot w(Q(y,\ell))^{1/{\beta}-1/q-1/s}\big\|[b,I_{\gamma}](f_1)\cdot\chi_{Q(y,\ell)}\big\|_{WL^q(w)}\notag\\
&+2\cdot w(Q(y,\ell))^{1/{\beta}-1/q-1/s}\big\|[b,I_{\gamma}](f_2)\cdot\chi_{Q(y,\ell)}\big\|_{WL^q(w)}\notag\\
&:=J_1(y,\ell)+J_2(y,\ell).
\end{align}
Next we shall calculate the two terms, respectively. According to Theorem \ref{Two2}, we get
\begin{align*}
J_1(y,\ell)&\leq 2\cdot w(Q(y,\ell))^{1/{\beta}-1/q-1/s}\big\|[b,I_{\gamma}](f_1)\big\|_{WL^q(w)}\notag\\
&\leq C\cdot w(Q(y,\ell))^{1/{\beta}-1/q-1/s}
\bigg(\int_{Q(y,2\sqrt{n}\ell)}|f(x)|^p\nu(x)\,dx\bigg)^{1/p}\notag\\
&=C\cdot w\big(Q(y,2\sqrt{n}\ell)\big)^{1/{\alpha}-1/p-1/s}\big\|f\cdot\chi_{Q(y,2\sqrt{n}\ell)}\big\|_{L^p(\nu)}\notag\\
&\times \frac{w(Q(y,\ell))^{1/{\alpha}-1/p-1/s}}{w(Q(y,2\sqrt{n}\ell))^{1/{\alpha}-1/p-1/s}}\notag,
\end{align*}
where the last identity is due to \eqref{WI}. Moreover, since $w\in A_\infty$, we know that $w\in\Delta_2$, and hence by inequality (\ref{doubling3}),
\begin{equation}\label{J1prime}
J_1(y,\ell)\leq C\cdot w\big(Q(y,2\sqrt{n}\ell)\big)^{1/{\alpha}-1/p-1/s}\big\|f\cdot\chi_{Q(y,2\sqrt{n}\ell)}\big\|_{L^p(\nu)}.
\end{equation}
On the other hand, from the definition \eqref{lfrac}, one can see that for any $x\in Q(y,\ell)$,
\begin{equation*}
\begin{split}
\big|[b,I_{\gamma}](f_2)(x)\big|
&\leq \big|b(x)-b_{Q(y,\ell)}\big|\cdot\big|I_\gamma(f_2)(x)\big|
+\big|I_\gamma\big([b_{Q(y,\ell)}-b]f_2\big)(x)\big|\\
&:=\widetilde{\xi}(x)+\widetilde{\eta}(x).
\end{split}
\end{equation*}
Consequently, we can further divide $J_2(y,\ell)$ into two parts:
\begin{equation*}
\begin{split}
J_2(y,\ell)\leq&4\cdot w(Q(y,\ell))^{1/{\beta}-1/q-1/s}\big\|\widetilde{\xi}(\cdot)\cdot\chi_{Q(y,\ell)}\big\|_{WL^q(w)}\\
&+4\cdot w(Q(y,\ell))^{1/{\beta}-1/q-1/s}\big\|\widetilde{\eta}(\cdot)\cdot\chi_{Q(y,\ell)}\big\|_{WL^q(w)}\\
:=&J_3(y,\ell)+J_4(y,\ell).
\end{split}
\end{equation*}
For the term $J_3(y,\ell)$, it follows directly from Chebyshev's inequality and estimate \eqref{alpha1} that
\begin{equation*}
\begin{split}
J_3(y,\ell)&\leq4\cdot w(Q(y,\ell))^{1/{\beta}-1/q-1/s}\bigg(\int_{Q(y,\ell)}\big|\widetilde{\xi}(x)\big|^qw(x)\,dx\bigg)^{1/q}\\
&\leq C\cdot w(Q(y,\ell))^{1/{\beta}-1/q-1/s}\bigg(\int_{Q(y,\ell)}\big|b(x)-b_{Q(y,\ell)}\big|^qw(x)\,dx\bigg)^{1/q}\\
&\times\sum_{j=1}^\infty\frac{1}{|Q(y,2^{j+1}\sqrt{n}\ell)|^{1-\gamma/n}}\int_{Q(y,2^{j+1}\sqrt{n}\ell)}|f(z)|\,dz\\
&\leq C\cdot w(Q(y,\ell))^{1/{\beta}-1/s}
\sum_{j=1}^\infty\frac{1}{|Q(y,2^{j+1}\sqrt{n}\ell)|^{1-\gamma/n}}\int_{Q(y,2^{j+1}\sqrt{n}\ell)}|f(z)|\,dz,
\end{split}
\end{equation*}
where in the last inequality we have used the fact that $w\in A_{\infty}$ and Lemma \ref{BMO}$(ii)$. Arguing as in the proof of Theorem \ref{mainthm:2}, we can also obtain that
\begin{equation*}
J_3(y,\ell)\leq C\sum_{j=1}^\infty w\big(Q(y,2^{j+1}\sqrt{n}\ell)\big)^{1/{\alpha}-1/p-1/s}\big\|f\cdot\chi_{Q(y,2^{j+1}\sqrt{n}\ell)}\big\|_{L^p(\nu)}
\cdot\frac{w(Q(y,\ell))^{1/{\beta}-1/s}}{w(Q(y,2^{j+1}\sqrt{n}\ell))^{1/{\beta}-1/s}}.
\end{equation*}
Let us now estimate the other term $J_4(y,\ell)$. As it was shown in Theorem \ref{mainthm:3} (see \eqref{pointwise2}), the following pointwise estimate
\begin{equation*}
\begin{split}
\widetilde{\eta}(x)&=\big|I_\gamma\big([b_{Q(y,\ell)}-b]f_2\big)(x)\big|\\
&\leq C\sum_{j=1}^\infty\frac{1}{|Q(y,2^{j+1}\sqrt{n}\ell)|^{1-\gamma/n}}\int_{Q(y,2^{j+1}\sqrt{n}\ell)}\big|b(z)-b_{Q(y,\ell)}\big|\cdot|f(z)|\,dz
\end{split}
\end{equation*}
holds for any $x\in Q(y,\ell)$ by a routine argument. This, together with Chebyshev's inequality, implies that
\begin{equation*}
\begin{split}
J_4(y,\ell)&\leq4\cdot w(Q(y,\ell))^{1/{\beta}-1/q-1/s}\bigg(\int_{Q(y,\ell)}\big|\widetilde{\eta}(x)\big|^qw(x)\,dx\bigg)^{1/q}\\
&\leq C\cdot w(Q(y,\ell))^{1/{\beta}-1/s}
\sum_{j=1}^\infty\frac{1}{|Q(y,2^{j+1}\sqrt{n}\ell)|^{1-\gamma/n}}\int_{Q(y,2^{j+1}\sqrt{n}\ell)}\big|b(z)-b_{Q(y,\ell)}\big|\cdot|f(z)|\,dz\\
&\leq C\cdot w(Q(y,\ell))^{1/{\beta}-1/s}
\sum_{j=1}^\infty\frac{1}{|Q(y,2^{j+1}\sqrt{n}\ell)|^{1-\gamma/n}}\int_{Q(y,2^{j+1}\sqrt{n}\ell)}\big|b(z)-b_{Q(y,2^{j+1}\sqrt{n}\ell)}\big|\cdot|f(z)|\,dz\\
&+C\cdot w(Q(y,\ell))^{1/{\beta}-1/s}
\sum_{j=1}^\infty\frac{1}{|Q(y,2^{j+1}\sqrt{n}\ell)|^{1-\gamma/n}}\int_{Q(y,2^{j+1}\sqrt{n}\ell)}\big|b_{Q(y,2^{j+1}\sqrt{n}\ell)}-b_{Q(y,\ell)}\big|\cdot|f(z)|\,dz\\
&:=J_5(y,\ell)+J_6(y,\ell).
\end{split}
\end{equation*}
An application of H\"older's inequality leads to that
\begin{equation*}
\begin{split}
J_5(y,\ell)&\leq C\cdot w(Q(y,\ell))^{1/{\beta}-1/s}
\sum_{j=1}^\infty\frac{1}{|Q(y,2^{j+1}\sqrt{n}\ell)|^{1-\gamma/n}}\bigg(\int_{Q(y,2^{j+1}\sqrt{n}\ell)}|f(z)|^p\nu(z)\,dz\bigg)^{1/p}\\
&\times\bigg(\int_{Q(y,2^{j+1}\sqrt{n}\ell)}\big|b(z)-b_{Q(y,2^{j+1}\sqrt{n}\ell)}\big|^{p'}\nu(z)^{-p'/p}\,dz\bigg)^{1/{p'}}\\
&=C\cdot w(Q(y,\ell))^{1/{\beta}-1/s}
\sum_{j=1}^\infty\frac{1}{|Q(y,2^{j+1}\sqrt{n}\ell)|^{1-\gamma/n}}\big\|f\cdot\chi_{Q(y,2^{j+1}\sqrt{n}\ell)}\big\|_{L^p(\nu)}\\
&\times\big|Q(y,2^{j+1}\sqrt{n}\ell)\big|^{1/{p'}}\Big\|\big[b-b_{Q(y,2^{j+1}\sqrt{n}\ell)}\big]\cdot\nu^{-1/p}\Big\|_{\mathcal C,Q(y,2^{j+1}\sqrt{n}\ell)},
\end{split}
\end{equation*}
where $\mathcal C(t)=t^{p'}$ is a Young function. Recall that the following inequalities
\begin{align}\label{final}
&\Big\|\big[b-b_{Q(y,2^{j+1}\sqrt{n}\ell)}\big]\cdot\nu^{-1/p}\Big\|_{\mathcal C,Q(y,2^{j+1}\sqrt{n}\ell)}\notag\\
&\leq C\big\|b-b_{Q(y,2^{j+1}\sqrt{n}\ell)}\big\|_{\mathcal B,Q(y,2^{j+1}\sqrt{n}\ell)}\cdot\big\|\nu^{-1/p}\big\|_{\mathcal A,Q(y,2^{j+1}\sqrt{n}\ell)}\notag\\
&\leq C\|b\|_*\cdot\big\|\nu^{-1/p}\big\|_{\mathcal A,Q(y,2^{j+1}\sqrt{n}\ell)}
\end{align}
hold by generalized H\"older's inequality and the estimate \eqref{Jensen}, where
\begin{equation*}
\mathcal A(t)\approx t^{p'}(1+\log^+t)^{p'}\qquad \&\qquad \mathcal B(t)\approx \exp(t)-1.
\end{equation*}
Moreover, in view of \eqref{U2} and \eqref{final}, we can deduce that
\begin{equation*}
\begin{split}
J_5(y,\ell)&\leq C\|b\|_*\cdot w(Q(y,\ell))^{1/{\beta}-1/s}
\sum_{j=1}^\infty\frac{\big\|f\cdot\chi_{Q(y,2^{j+1}\sqrt{n}\ell)}\big\|_{L^p(\nu)}}{|Q(y,2^{j+1}\sqrt{n}\ell)|^{1/p-\gamma/n}}
\cdot\big\|\nu^{-1/p}\big\|_{\mathcal A,Q(y,2^{j+1}\sqrt{n}\ell)}\\
&=C\|b\|_*\sum_{j=1}^\infty w\big(Q(y,2^{j+1}\sqrt{n}\ell)\big)^{1/{\beta}-1/q-1/s}\big\|f\cdot\chi_{Q(y,2^{j+1}\sqrt{n}\ell)}\big\|_{L^p(\nu)}
\cdot\frac{w(Q(y,\ell))^{1/{\beta}-1/s}}{w(Q(y,2^{j+1}\sqrt{n}\ell))^{1/{\beta}-1/s}}\\
&\times\frac{w(Q(y,2^{j+1}\sqrt{n}\ell))^{1/q}}{|Q(y,2^{j+1}\sqrt{n}\ell)|^{1/p-\gamma/n}}
\cdot\big\|\nu^{-1/p}\big\|_{\mathcal A,Q(y,2^{j+1}\sqrt{n}\ell)}\\
&\leq C\|b\|_*\sum_{j=1}^\infty w\big(Q(y,2^{j+1}\sqrt{n}\ell)\big)^{1/{\beta}-1/q-1/s}\big\|f\cdot\chi_{Q(y,2^{j+1}\sqrt{n}\ell)}\big\|_{L^p(\nu)}
\cdot\frac{w(Q(y,\ell))^{1/{\beta}-1/s}}{w(Q(y,2^{j+1}\sqrt{n}\ell))^{1/{\beta}-1/s}}\\
&\big|Q(y,2^{j+1}\sqrt{n}\ell)\big|^{\gamma/n+1/q-1/p}\cdot
\bigg(\frac{1}{Q(y,2^{j+1}\sqrt{n}\ell)}\int_{Q(y,2^{j+1}\sqrt{n}\ell)}w(z)^r\,dz\bigg)^{1/{(rq)}}
\big\|\nu^{-1/p}\big\|_{\mathcal A,Q(y,2^{j+1}\sqrt{n}\ell)}\\
&\leq C\|b\|_*\sum_{j=1}^\infty w\big(Q(y,2^{j+1}\sqrt{n}\ell)\big)^{1/{\beta}-1/q-1/s}\big\|f\cdot\chi_{Q(y,2^{j+1}\sqrt{n}\ell)}\big\|_{L^p(\nu)}
\cdot\frac{w(Q(y,\ell))^{1/{\beta}-1/s}}{w(Q(y,2^{j+1}\sqrt{n}\ell))^{1/{\beta}-1/s}}.
\end{split}
\end{equation*}
The last inequality is obtained by the $A_p$-type condition $(\S\S)$ assumed on $(w,\nu)$. We now turn our attention to the last term $J_6(y,\ell)$. Applying Lemma \ref{BMO}$(i)$ and H\"older's inequality, we get
\begin{equation*}
\begin{split}
J_6(y,\ell)&\leq C\cdot w(Q(y,\ell))^{1/{\beta}-1/s}
\sum_{j=1}^\infty\frac{(j+1)\|b\|_*}{|Q(y,2^{j+1}\sqrt{n}\ell)|^{1-\gamma/n}}\int_{Q(y,2^{j+1}\sqrt{n}\ell)}|f(z)|\,dz\\
&\leq C\cdot w(Q(y,\ell))^{1/{\beta}-1/s}
\sum_{j=1}^\infty\frac{(j+1)\|b\|_*}{|Q(y,2^{j+1}\sqrt{n}\ell)|^{1-\gamma/n}}\bigg(\int_{Q(y,2^{j+1}\sqrt{n}\ell)}|f(z)|^p\nu(z)\,dz\bigg)^{1/p}\\
&\times\bigg(\int_{Q(y,2^{j+1}\sqrt{n}\ell)}\nu(z)^{-p'/p}\,dz\bigg)^{1/{p'}}\\
&=C\|b\|_*\sum_{j=1}^\infty w\big(Q(y,2^{j+1}\sqrt{n}\ell)\big)^{1/{\beta}-1/q-1/s}\big\|f\cdot\chi_{Q(y,2^{j+1}\sqrt{n}\ell)}\big\|_{L^p(\nu)}\\
&\times\big(j+1\big)\cdot\frac{w(Q(y,\ell))^{1/{\beta}-1/s}}{w(Q(y,2^{j+1}\sqrt{n}\ell))^{1/{\beta}-1/s}}
\cdot\frac{w(Q(y,2^{j+1}\sqrt{n}\ell))^{1/q}}{|Q(y,2^{j+1}\sqrt{n}\ell)|^{1-\gamma/n}}\bigg(\int_{Q(y,2^{j+1}\sqrt{n}\ell)}\nu(z)^{-p'/p}\,dz\bigg)^{1/{p'}}.
\end{split}
\end{equation*}
Also observe that the condition $(\S\S)$ is stronger than the condition $(\S)$. Using this fact along with \eqref{U2}, we have
\begin{equation*}
\begin{split}
J_6(y,\ell)&\leq C\|b\|_*\sum_{j=1}^\infty w\big(Q(y,2^{j+1}\sqrt{n}\ell)\big)^{1/{\beta}-1/q-1/s}\big\|f\cdot\chi_{Q(y,2^{j+1}\sqrt{n}\ell)}\big\|_{L^p(\nu)}\\
&\times\big(j+1\big)\cdot\frac{w(Q(y,\ell))^{1/{\beta}-1/s}}{w(Q(y,2^{j+1}\sqrt{n}\ell))^{1/{\beta}-1/s}}\\
&\times\frac{|Q(y,2^{j+1}\sqrt{n}\ell)|^{1/{(r'q)}}}{|Q(y,2^{j+1}\sqrt{n}\ell)|^{1-\gamma/n}}
\bigg(\int_{Q(y,2^{j+1}\sqrt{n}\ell)}w(z)^r\,dz\bigg)^{1/{(rq)}}
\bigg(\int_{Q(y,2^{j+1}\sqrt{n}\ell)}\nu(z)^{-p'/p}\,dz\bigg)^{1/{p'}}\\
&\leq C\|b\|_*\sum_{j=1}^\infty w\big(Q(y,2^{j+1}\sqrt{n}\ell)\big)^{1/{\beta}-1/q-1/s}\big\|f\cdot\chi_{Q(y,2^{j+1}\sqrt{n}\ell)}\big\|_{L^p(\nu)}\\
&\times\big(j+1\big)\cdot\frac{w(Q(y,\ell))^{1/{\beta}-1/s}}{w(Q(y,2^{j+1}\sqrt{n}\ell))^{1/{\beta}-1/s}}.
\end{split}
\end{equation*}
Summing up all the above estimates and taking into consideration \eqref{WI}, we conclude that
\begin{align}\label{J2prime}
J_2(y,\ell)&\leq C\sum_{j=1}^\infty w\big(Q(y,2^{j+1}\sqrt{n}\ell)\big)^{1/{\beta}-1/q-1/s}\big\|f\cdot\chi_{Q(y,2^{j+1}\sqrt{n}\ell)}\big\|_{L^p(\nu)}\notag\\
&\times\big(j+1\big)\cdot\frac{w(Q(y,\ell))^{1/{\beta}-1/s}}{w(Q(y,2^{j+1}\sqrt{n}\ell))^{1/{\beta}-1/s}}\notag\\
&=C\sum_{j=1}^\infty w\big(Q(y,2^{j+1}\sqrt{n}\ell)\big)^{1/{\alpha}-1/p-1/s}\big\|f\cdot\chi_{Q(y,2^{j+1}\sqrt{n}\ell)}\big\|_{L^p(\nu)}\notag\\
&\times\big(j+1\big)\cdot\frac{w(Q(y,\ell))^{1/{\beta}-1/s}}{w(Q(y,2^{j+1}\sqrt{n}\ell))^{1/{\beta}-1/s}}.
\end{align}
Moreover, by our hypothesis on $w:w\in A_\infty$ and inequality \eqref{compare} with exponent $\delta^\ast>0$, we compute
\begin{align}\label{6}
\sum_{j=1}^\infty\big(j+1\big)\cdot\frac{w(Q(y,\ell))^{1/{\beta}-1/s}}{w(Q(y,2^{j+1}\sqrt{n}\ell))^{1/{\beta}-1/s}}
&\leq C\sum_{j=1}^\infty(j+1)\cdot\left(\frac{|Q(y,\ell)|}{|Q(y,2^{j+1}\sqrt{n}\ell)|}\right)^{\delta^\ast(1/{\beta}-1/s)}\notag\\
&=C\sum_{j=1}^\infty(j+1)\cdot\left(\frac{1}{2^{(j+1)n}}\right)^{\delta^\ast(1/{\beta}-1/s)}\notag\\
&\leq C.
\end{align}
Notice that the exponent $\delta^\ast(1/{\beta}-1/s)$ is positive because $\beta<s$, which guarantees that the last series is convergent.
Thus, by taking the $L^s(\mu)$-norm of both sides of \eqref{Jprime}(with respect to the variable $y$), and then using Minkowski's inequality, \eqref{J1prime} and \eqref{J2prime}, we finally obtain
\begin{equation*}
\begin{split}
&\Big\|w(Q(y,\ell))^{1/{\beta}-1/q-1/s}\big\|[b,I_{\gamma}](f)\cdot\chi_{Q(y,\ell)}\big\|_{WL^q(w)}\Big\|_{L^s(\mu)}\\
&\leq\big\|J_1(y,\ell)\big\|_{L^s(\mu)}+\big\|J_2(y,\ell)\big\|_{L^s(\mu)}\\
&\leq C\Big\|w\big(Q(y,2\sqrt{n}\ell)\big)^{1/{\alpha}-1/p-1/s}\big\|f\cdot\chi_{Q(y,2\sqrt{n}\ell)}\big\|_{L^p(\nu)}\Big\|_{L^s(\mu)}\\
&+C\sum_{j=1}^\infty\Big\|w\big(Q(y,2^{j+1}\sqrt{n}\ell)\big)^{1/{\alpha}-1/p-1/s}\big\|f\cdot\chi_{Q(y,2^{j+1}\sqrt{n}\ell)}\big\|_{L^p(\nu)}\Big\|_{L^s(\mu)}\\
&\times\big(j+1\big)\cdot\frac{w(Q(y,\ell))^{1/{\beta}-1/s}}{w(Q(y,2^{j+1}\sqrt{n}\ell))^{1/{\beta}-1/s}}\\
&\leq C\big\|f\big\|_{(L^p,L^s)^{\alpha}(\nu,w;\mu)}+C\big\|f\big\|_{(L^p,L^s)^{\alpha}(\nu,w;\mu)}
\times\sum_{j=1}^\infty\big(j+1\big)\cdot\frac{w(Q(y,\ell))^{1/{\beta}-1/s}}{w(Q(y,2^{j+1}\sqrt{n}\ell))^{1/{\beta}-1/s}}\\
&\leq C\big\|f\big\|_{(L^p,L^s)^{\alpha}(\nu,w;\mu)},
\end{split}
\end{equation*}
where the last inequality is due to \eqref{6}. We therefore conclude the proof of Theorem \ref{mainthm:4} by taking the supremum over all $\ell>0$.
\end{proof}
The higher order commutators formed by $I_{\gamma}$ and a symbol function $b$ are usually defined by
\begin{align}\label{lfracm}
[b,I_{\gamma}]_mf(x)&:=\frac{1}{\zeta(\gamma)}\int_{\mathbb R^n}\frac{[b(x)-b(y)]^m\cdot f(y)}{|x-y|^{n-\gamma}}\,dy,\quad m=1,2,3,\dots.
\end{align}
Obviously, $[b,I_{\gamma}]_1=[b,I_{\gamma}]$ which is the linear commutator \eqref{lfrac}, and
\begin{equation*}
[b,I_{\gamma}]_m=\big[b,[b,I_{\gamma}]_{m-1}\big],\quad m=2,3,\dots.
\end{equation*}
By induction argument, we will then obtain the following conclusions.
\begin{theorem}\label{mainthm:7}
Let $0<\gamma<n$, $1<p<q<\infty$, $0<\kappa<p/q$ and $b\in\mathrm{BMO}(\mathbb R^n)$. Given a pair of weights $(w,\nu)$ on $\mathbb R^n$, suppose that for some $r>1$ and for all cubes $Q$ in $\mathbb R^n$,
\begin{equation*}
\big|Q\big|^{\gamma/n+1/q-1/p}\cdot\bigg(\frac{1}{|Q|}\int_Q w(x)^r\,dx\bigg)^{1/{(rq)}}\big\|\nu^{-1/p}\big\|_{\mathcal A_m,Q}\leq C<\infty,
\end{equation*}
where $\mathcal A_m(t)=t^{p'}(1+\log^+t)^{mp'}$, $m=2,3,\dots$. If $w\in A_\infty$, then the higher order commutators $[b,I_\gamma]_m$ are bounded from $L^{p,\kappa}(\nu,w)$ into $WL^{q,{(\kappa q)}/p}(w)$.
\end{theorem}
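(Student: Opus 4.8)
\emph{Proof proposal.} The plan is to run the argument of Theorem \ref{mainthm:3} essentially line by line; one may organize it as an induction on $m$ with base case $m=1$ (Theorem \ref{mainthm:3}), but the genuinely new ingredients for $m\ge2$ are just the combinatorial expansion of $[b(x)-b(y)]^m$ and the larger Orlicz bump $\mathcal A_m$. Fix $f\in L^{p,\kappa}(\nu,w)$, a cube $Q=Q(x_0,\ell)$, and split $f=f_1+f_2$ with $f_1=f\cdot\chi_{2Q}$ and $f_2=f\cdot\chi_{(2Q)^c}$. As in Theorem \ref{mainthm:3} it suffices to bound, for every $\sigma>0$,
\[
\frac{1}{w(Q)^{\kappa/p}}\,\sigma\cdot\Big[w\big(\big\{x\in Q:\big|[b,I_\gamma]_m(f_i)(x)\big|>\sigma/2\big\}\big)\Big]^{1/q}\le C\big\|f\big\|_{L^{p,\kappa}(\nu,w)},\qquad i=1,2.
\]
For $i=1$ one invokes the higher-order analogue of Theorem \ref{Two2} — namely the two-weight weak $(p,q)$ inequality $L^p(\nu)\to WL^q(w)$ for $[b,I_\gamma]_m$ under the $A_p$-type condition with bump $\mathcal A_m$, which is obtained by the method of \cite{liu} — together with $w\in\Delta_2$ (recall $w\in A_\infty$ implies $w\in\Delta_2$), exactly as the term $J_1$ was estimated there.

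The substance is the piece $i=2$. Using $|x-y|\approx|x_0-y|$ for $x\in Q$, $y\in(2Q)^c$ and the binomial expansion $[b(x)-b(y)]^m=\sum_{k=0}^m\binom{m}{k}(-1)^{m-k}[b(x)-b_Q]^k[b(y)-b_Q]^{m-k}$, one gets the pointwise bound
\[
\big|[b,I_\gamma]_m(f_2)(x)\big|\le C\sum_{k=0}^m\big|b(x)-b_Q\big|^k\int_{(2Q)^c}\frac{|b(y)-b_Q|^{m-k}\,|f(y)|}{|x_0-y|^{n-\gamma}}\,dy,\qquad x\in Q.
\]
For each $k$, Chebyshev's inequality at exponent $q$ reduces matters to controlling $w(Q)^{1/q-\kappa/p}\big(\int_Q|b(x)-b_Q|^{kq}w(x)\,dx\big)^{1/q}$ times the inner integral; since $w\in A_\infty$, Lemma \ref{BMO}$(ii)$ applied with the integer exponent $kq$ gives $\big(\int_Q|b(x)-b_Q|^{kq}w(x)\,dx\big)^{1/q}\le C\|b\|_*^{k}\,w(Q)^{1/q}$ (the case $k=0$ being trivial). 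Decomposing $(2Q)^c$ into the annuli $2^{j+1}Q\setminus 2^jQ$ as in \eqref{pointwise1}, and writing $b(y)-b_Q=[b(y)-b_{2^{j+1}Q}]+[b_{2^{j+1}Q}-b_Q]$ with $|b_{2^{j+1}Q}-b_Q|\le C(j+1)\|b\|_*$ by Lemma \ref{BMO}$(i)$, the $j$-th inner integral is dominated by $\sum_{i=0}^{m-k}(j+1)^{m-k-i}\|b\|_*^{m-k-i}\int_{2^{j+1}Q}|b(y)-b_{2^{j+1}Q}|^{i}|f(y)|\,dy$.

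Each of these integrals is handled by H\"older's inequality with exponent $p$, which after \eqref{norm} leaves $\|f\cdot\chi_{2^{j+1}Q}\|_{L^p(\nu)}$ times $|2^{j+1}Q|^{1/p'}\big\|[b-b_{2^{j+1}Q}]^{i}\cdot\nu^{-1/p}\big\|_{\mathcal C,2^{j+1}Q}$ with $\mathcal C(t)=t^{p'}$. For $i=0$ this is the $L^{p'}$-bump of $\nu^{-1/p}$, controlled by $(\S)$, which is weaker than $(\S\S)$ with $\mathcal A_m$ (since $\mathcal C\le\mathcal A_m$). For $1\le i\le m$, Lemma \ref{three} with the factorization $\mathcal C^{-1}(t)=\mathcal A_i^{-1}(t)\cdot\mathcal B_i^{-1}(t)$, where $\mathcal A_i(t)\approx t^{p'}(1+\log^+t)^{ip'}$ and $\mathcal B_i(t)\approx\exp(t^{1/i})-1$, together with $\big\|[b-b_{2^{j+1}Q}]^{i}\big\|_{\mathcal B_i,2^{j+1}Q}=\big\|b-b_{2^{j+1}Q}\big\|_{\exp L,2^{j+1}Q}^{i}\le C\|b\|_*^{i}$ (a consequence of \eqref{Jensen}) and the pointwise comparison $\mathcal A_i\le\mathcal A_m$ (hence $\|\nu^{-1/p}\|_{\mathcal A_i,Q}\le\|\nu^{-1/p}\|_{\mathcal A_m,Q}$), bounds it by $C\|b\|_*^{i}\|\nu^{-1/p}\|_{\mathcal A_m,2^{j+1}Q}$. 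Inserting the $L^r$-bump of $w$ via \eqref{U} and invoking the hypothesis $(\S\S)$ collapses the $j$-th contribution to $C\|b\|_*^m\|f\|_{L^{p,\kappa}(\nu,w)}\,(j+1)^{m}\,w(Q)^{1/q-\kappa/p}/w(2^{j+1}Q)^{1/q-\kappa/p}$. Since $w\in A_\infty$, \eqref{compare} gives $w(Q)/w(2^{j+1}Q)\le C\,2^{-(j+1)n\delta}$, so — using $1/q-\kappa/p>0$, which is exactly where $\kappa<p/q$ enters, as in \eqref{C2} — the series $\sum_{j\ge1}(j+1)^{m}\big(w(Q)/w(2^{j+1}Q)\big)^{1/q-\kappa/p}$ converges, the polynomial growth $(j+1)^m$ being swallowed by the geometric decay. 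Summing the finitely many terms in $k$ and $i$ finishes the $i=2$ estimate, and taking the supremum over $Q$ and $\sigma$ completes the proof.

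The main obstacle is organizational rather than conceptual: one must keep the two nested binomial expansions straight and verify that \emph{every} mixed term reduces its weight side to the single bump $\mathcal A_m$ — this requires both $\mathcal A_i\le\mathcal A_m$ for $i\le m$ and that the complementary Young functions $\mathcal B_i$ are of exponential type $\exp L^{1/i}$, so that $\big\||b-b_Q|^{i}\big\|_{\mathcal B_i,Q}\lesssim\|b\|_*^{i}$. It is also worth noting that, in contrast to Theorem \ref{mainthm:1}, the hypothesis $w\in A_\infty$ (not merely $w\in\Delta_2$) is genuinely used here, both for Lemma \ref{BMO}$(ii)$ at the integer exponents $kq$ and for the quantitative decay \eqref{compare} that absorbs the $(j+1)^m$ factors. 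The corresponding statements on weighted amalgam spaces and in the extreme cases $\kappa=p/q$, $\beta=s$ follow from the same scheme, replacing the distribution-function estimate by the $WL^q(w)$-norm and, where appropriate, taking an additional $L^s(\mu)$-norm and using $\mu\in\Delta_2$.
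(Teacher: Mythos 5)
Your proposal is correct and is essentially the argument the paper has in mind: the paper states Theorem \ref{mainthm:7} without proof (remarking only that it follows ``by induction argument'' from the proofs of Theorems \ref{mainthm:3} and \ref{mainthm:4}), and your direct adaptation --- binomial expansion of $[b(x)-b(y)]^m$, the factorization $\mathcal C^{-1}=\mathcal A_i^{-1}\cdot\mathcal B_i^{-1}$ with $\mathcal B_i(t)\approx\exp(t^{1/i})-1$ so that $\big\|[b-b_{2^{j+1}Q}]^i\big\|_{\mathcal B_i,2^{j+1}Q}\leq C\|b\|_*^i$, Lemma \ref{BMO}, and the $A_\infty$ decay \eqref{compare} to absorb the $(j+1)^m$ factors exactly as in \eqref{C2} --- is the natural fleshing-out of that remark. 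The only caveats are cosmetic: as with Theorem \ref{Two2} in the paper itself, the weighted Lebesgue-space weak $(p,q)$ inequality for $[b,I_\gamma]_m$ under the $\mathcal A_m$-bump is invoked rather than proved, and Lemma \ref{BMO}$(ii)$ requires no integrality of the exponent $kq$ (it holds for every exponent in $(1,\infty)$); neither point affects correctness.
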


\begin{theorem}\label{mainthm:8}
Let $0<\gamma<n$, $1<p<q<\infty$, $\mu\in\Delta_2$ and $b\in\mathrm{BMO}(\mathbb R^n)$. Given a pair of weights $(w,\nu)$ on $\mathbb R^n$, assume that for some $r>1$ and for all cubes $Q$ in $\mathbb R^n$,
\begin{equation*}
\big|Q\big|^{\gamma/n+1/q-1/p}\cdot\bigg(\frac{1}{|Q|}\int_Q w(x)^r\,dx\bigg)^{1/{(rq)}}\big\|\nu^{-1/p}\big\|_{\mathcal A_m,Q}\leq C<\infty,
\end{equation*}
where $\mathcal A_m(t)=t^{p'}(1+\log^+t)^{mp'}$, $m=2,3,\dots$. If $p\leq\alpha<\beta<s\leq\infty$ and $w\in A_\infty$, then the higher order commutators $[b,I_{\gamma}]_m$ are bounded from $(L^p,L^s)^{\alpha}(\nu,w;\mu)$ into $(WL^q,L^s)^{\beta}(w;\mu)$ with $1/{\beta}=1/{\alpha}-(1/p-1/q)$.
\end{theorem}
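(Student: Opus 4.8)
The plan is to follow the proof of Theorem \ref{mainthm:4} line by line, with the linear commutator $[b,I_{\gamma}]=[b,I_{\gamma}]_1$ replaced by $[b,I_{\gamma}]_m$ and the Young function $\mathcal A(t)=t^{p'}(1+\log^+t)^{p'}$ replaced by $\mathcal A_m(t)=t^{p'}(1+\log^+t)^{mp'}$; equivalently one inducts on $m$, the base case $m=1$ being Theorem \ref{mainthm:4}. Besides the two-weight weak $(p,q)$ bound $[b,I_{\gamma}]_m\colon L^p(\nu)\to WL^q(w)$ under the hypothesis of the theorem (the $m$-th order analogue of Theorem \ref{Two2}, obtained by the same Orlicz-bump technique of \cite{liu}), the only new ingredients over the $m=1$ case are Lemma \ref{BMO}$(ii)$ with a larger exponent and the generalized Hölder inequality of Lemma \ref{three} applied to higher powers of $b-b_Q$. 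Fix $\ell>0$ and a cube $Q=Q(y,\ell)$, decompose $f=f_1+f_2$ with $f_1=f\cdot\chi_{2Q}$ and $f_2=f\cdot\chi_{(2Q)^c}$, and split
\begin{equation*}
w(Q(y,\ell))^{1/{\beta}-1/q-1/s}\big\|[b,I_{\gamma}]_m(f)\cdot\chi_{Q(y,\ell)}\big\|_{WL^q(w)}\le J_1(y,\ell)+J_2(y,\ell),
\end{equation*}
with $J_1$, $J_2$ corresponding to $f_1$, $f_2$.

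The term $J_1(y,\ell)$ is treated exactly as in \eqref{J1prime}: the weak $(p,q)$ bound for $[b,I_{\gamma}]_m$ on weighted Lebesgue spaces, together with $w\in\Delta_2$ and the identity $1/{\beta}-1/q-1/s=1/{\alpha}-1/p-1/s$ (see \eqref{WI}), gives $J_1(y,\ell)\le C\,w(Q(y,2\sqrt n\ell))^{1/{\alpha}-1/p-1/s}\big\|f\cdot\chi_{Q(y,2\sqrt n\ell)}\big\|_{L^p(\nu)}$, whose $L^s(\mu)$-norm in $y$ is $\le C\|f\|_{(L^p,L^s)^{\alpha}(\nu,w;\mu)}$.

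The main work is the estimate of $J_2(y,\ell)$. In \eqref{lfracm} I would expand $[b(x)-b(z)]^m=\sum_{k=0}^m\binom{m}{k}[b(x)-b_{Q}]^{m-k}[b_{Q}-b(z)]^k$, splitting $[b,I_{\gamma}]_m(f_2)$ into $m+1$ pieces. After Chebyshev's inequality, the $L^q(w)$-norm over $Q$ of the prefactor $|b(x)-b_{Q}|^{m-k}$ is $\le C\|b\|_*^{m-k}w(Q)^{1/q}$ by Lemma \ref{BMO}$(ii)$ with exponent $(m-k)q$ (using $w\in A_\infty$), while for $x\in Q$ the factor $I_{\gamma}\big([b_{Q}-b]^k f_2\big)(x)$ obeys, as in \eqref{pointwise2}, the pointwise bound by $C\sum_{j\ge1}|2^{j+1}Q|^{\gamma/n-1}\int_{2^{j+1}Q}|b(z)-b_{Q}|^k|f(z)|\,dz$. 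Replacing $b_{Q}$ by $b_{2^{j+1}Q}$ via Lemma \ref{BMO}$(i)$ costs at most a polynomial factor $(j+1)^k$; on the main resulting sub-piece I would apply Hölder with exponent $p$ and then Lemma \ref{three} to $[b-b_{2^{j+1}Q}]^k\cdot\nu^{-1/p}$ with the triple $\mathcal C(t)=t^{p'}$, $\mathcal B(t)\approx\exp(t^{1/k})-1$ and $\mathcal A_k(t)=t^{p'}(1+\log^+t)^{kp'}$, together with the John--Nirenberg bound $\big\|[b-b_{2^{j+1}Q}]^k\big\|_{\exp L^{1/k},\,2^{j+1}Q}\le C\|b\|_*^k$. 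Since $k\le m$ one has $\mathcal A_k\le\mathcal A_m$, hence $\big\|\nu^{-1/p}\big\|_{\mathcal A_k,\,2^{j+1}Q}\le\big\|\nu^{-1/p}\big\|_{\mathcal A_m,\,2^{j+1}Q}$, so the single bumped hypothesis with $\mathcal A_m$ covers every $k$. Applying Hölder with exponent $r>1$ to $w$ as in \eqref{U2} and invoking the hypothesis collapses $J_2(y,\ell)$ to
\begin{equation*}
C\|b\|_*^m\sum_{j=1}^\infty(j+1)^m\,w\big(Q(y,2^{j+1}\sqrt n\ell)\big)^{1/{\alpha}-1/p-1/s}\big\|f\cdot\chi_{Q(y,2^{j+1}\sqrt n\ell)}\big\|_{L^p(\nu)}\cdot\frac{w(Q(y,\ell))^{1/{\beta}-1/s}}{w(Q(y,2^{j+1}\sqrt n\ell))^{1/{\beta}-1/s}},
\end{equation*}
again using \eqref{WI}.

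Finally I would take the $L^s(\mu)$-norm in $y$ of $J_1+J_2$, pull the sum out by Minkowski's inequality, bound each $L^s(\mu)$-norm of $w(Q(y,2^{j+1}\sqrt n\ell))^{1/{\alpha}-1/p-1/s}\|f\cdot\chi_{Q(y,2^{j+1}\sqrt n\ell)}\|_{L^p(\nu)}$ by $\|f\|_{(L^p,L^s)^{\alpha}(\nu,w;\mu)}$, and sum the series
\begin{equation*}
\sum_{j=1}^\infty(j+1)^m\cdot\frac{w(Q(y,\ell))^{1/{\beta}-1/s}}{w(Q(y,2^{j+1}\sqrt n\ell))^{1/{\beta}-1/s}}\le C\sum_{j=1}^\infty(j+1)^m\Big(\frac{1}{2^{(j+1)n}}\Big)^{\delta(1/{\beta}-1/s)}<\infty,
\end{equation*}
which converges because $w\in A_\infty$ provides the power decay \eqref{compare} for some $\delta>0$ and $\beta<s$ makes the exponent $\delta(1/{\beta}-1/s)$ strictly positive, so the geometric decay absorbs the polynomial growth $(j+1)^m$. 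Taking the supremum over $\ell>0$ finishes the proof. I expect the only real obstacle to be the Orlicz bookkeeping in $J_2$: one must check that for each $0\le k\le m$ the $k$-fold $\mathrm{BMO}$ factor sitting inside $I_{\gamma}$ is absorbed by precisely a $(1+\log^+)^{kp'}$ bump, via the $\exp L^{1/k}$ John--Nirenberg inequality and Lemma \ref{three}, and that all these bumps are dominated by the single worst case $\mathcal A_m$ so that one uniform $A_p$-type hypothesis suffices; once this is secured, the polynomial $j$-growth produced by Lemma \ref{BMO}$(i)$ is harmless thanks to the $A_\infty$ decay.
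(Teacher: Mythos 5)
Your proposal is correct and follows essentially the route the paper intends: the paper offers no detailed argument for Theorem \ref{mainthm:8} beyond the remark that it follows ``by induction argument'' from the method of Theorem \ref{mainthm:4}, and your write-up is exactly that adaptation, with the binomial expansion of $[b(x)-b(z)]^m$, Lemma \ref{BMO}$(ii)$ at exponent $(m-k)q$, the $\exp L^{1/k}$ John--Nirenberg bound combined with Lemma \ref{three} for the $(1+\log^+t)^{kp'}$ bumps dominated by $\mathcal A_m$, and the $A_\infty$ decay \eqref{compare} absorbing the $(j+1)^m$ growth, all matching the paper's scheme. The only ingredient you assert rather than prove, the $m$-th order analogue of Theorem \ref{Two2} for the local piece $J_1$, is likewise left unproved in the paper (which omits even the $m=1$ case), so your treatment is consistent with the paper's own level of detail.
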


\section{Proofs of Theorems \ref{mainthm:5} and \ref{mainthm:6}}
In the last section, we will prove the conclusions of Theorems \ref{mainthm:5} and \ref{mainthm:6}.
\begin{proof}[Proof of Theorem $\ref{mainthm:5}$]
Let $f\in L^{p,\kappa}(\nu,w)$ with $1<p<q<\infty$ and $\kappa=p/q$. For any given cube $Q=Q(x_0,\ell)$ in $\mathbb R^n$, it suffices to prove that the following inequality
\begin{equation}\label{end1.1}
\frac{1}{|Q|}\int_Q\big|I_{\gamma}f(x)-(I_{\gamma}f)_Q\big|\,dx\leq C\big\|f\big\|_{L^{p,\kappa}(\nu,w)}
\end{equation}
holds. Decompose $f$ as $f=f_1+f_2$, where $f_1=f\cdot\chi_{4Q}$, $f_2=f\cdot\chi_{(4Q)^c}$, $4Q=Q(x_0,4\sqrt{n}\ell)$. By the linearity of the fractional integral operator $I_{\gamma}$, the left-hand side of \eqref{end1.1} can be divided into two parts. That is,
\begin{equation*}
\begin{split}
&\frac{1}{|Q|}\int_{Q}\big|I_{\gamma}f(x)-(I_{\gamma}f)_Q\big|\,dx\\
&\leq\frac{1}{|Q|}\int_Q\big|I_{\gamma}f_1(x)-(I_{\gamma}f_1)_Q\big|\,dx+\frac{1}{|Q|}\int_Q\big|I_{\gamma}f_2(x)-(I_{\gamma}f_2)_Q\big|\,dx\\
&:=I+II.
\end{split}
\end{equation*}
For the first term $I$, it follows directly from Fubini's theorem that
\begin{equation*}
\begin{split}
I&\leq\frac{2}{|Q|}\int_Q\big|I_{\gamma}f_1(x)\big|\,dx\\
&\leq\frac{C}{|Q|}\int_Q\bigg(\int_{4Q}\frac{1}{|x-y|^{n-\gamma}}|f(y)|\,dy\bigg)dx\\
&=\frac{C}{|Q|}\int_{4Q}\bigg(\int_Q\frac{1}{|x-y|^{n-\gamma}}\,dx\bigg)|f(y)|\,dy.
\end{split}
\end{equation*}
It is clear that
\begin{equation*}
|x-y|\leq|x-x_0|+|y-x_0|\leq\frac{5n}{2}\ell
\end{equation*}
when $x\in Q$ and $y\in 4Q$. Using the transform $x-y\mapsto z$ and polar coordinates, one has
\begin{align}\label{WHI}
\int_Q\frac{1}{|x-y|^{n-\gamma}}\,dx&\leq\int_{|z|\leq\frac{5n}{2}\ell}\frac{1}{|z|^{n-\gamma}}dz\notag\\
&=w_{n-1}\cdot\int_0^{\frac{5n}{2}\ell}\frac{1}{\varrho^{n-\gamma}}\varrho^{n-1}d\varrho\notag\\
&=w_{n-1}\cdot\frac{\,1\,}{\gamma}\Big(\frac{5n}{2}\ell\Big)^{\gamma}.
\end{align}
Here we use $w_{n-1}$ to denote the measure of the unit sphere in $\mathbb R^n$. This indicates that
\begin{equation}\label{end1.2}
I\leq\frac{C}{|Q|^{1-{\gamma}/n}}\int_{4Q}|f(y)|\,dy.
\end{equation}
Using H\"older's inequality and noting the fact that $\kappa=p/q$, we have
\begin{align}\label{end1.3}
I&\leq \frac{C}{|Q|^{1-{\gamma}/n}}\bigg(\int_{4Q}|f(y)|^p\nu(y)\,dy\bigg)^{1/p}
\bigg(\int_{4Q}\nu(y)^{-p'/p}\,dy\bigg)^{1/{p'}}\notag\\
&\leq C\big\|f\big\|_{L^{p,\kappa}(\nu,w)}\cdot\frac{w(4Q)^{{\kappa}/p}}{|Q|^{1-{\gamma}/n}}
\bigg(\int_{4Q}\nu(y)^{-p'/p}\,dy\bigg)^{1/{p'}}\notag\\
&=C\big\|f\big\|_{L^{p,\kappa}(\nu,w)}\cdot\frac{w(4Q)^{1/q}}{|Q|^{1-{\gamma}/n}}\bigg(\int_{4Q}\nu(y)^{-p'/p}\,dy\bigg)^{1/{p'}}.
\end{align}
Moreover, it follows from the condition $(\S)$ on $(w,\nu)$ and \eqref{U}(consider $4Q$ instead of $2^{j+1}Q$) that
\begin{equation*}
\begin{split}
I&\leq C\big\|f\big\|_{L^{p,\kappa}(\nu,w)}\cdot\frac{|4Q|^{1/{(r'q)}}}{|4Q|^{1-{\gamma}/n}}
\bigg(\int_{4Q}w(y)^r\,dy\bigg)^{1/{(rq)}}\bigg(\int_{4Q}\nu(y)^{-p'/p}\,dy\bigg)^{1/{p'}}\\
&\leq C\big\|f\big\|_{L^{p,\kappa}(\nu,w)}.
\end{split}
\end{equation*}
For the second term $II$, by the definition \eqref{frac}, we have that for any $x\in Q$,
\begin{equation*}
\begin{split}
\big|I_{\gamma}f_2(x)-(I_{\gamma}f_2)_Q\big|&=\bigg|\frac{1}{|Q|}\int_Q\big[I_{\gamma}f_2(x)-I_{\gamma}f_2(y)\big]\,dy\bigg|\\
&=\bigg|\frac{C}{|Q|}\int_Q\bigg\{\int_{(4Q)^c}\bigg[\frac{1}{|x-z|^{n-\gamma}}-\frac{1}{|y-z|^{n-\gamma}}\bigg]f(z)\,dz\bigg\}dy\bigg|\\
&\leq\frac{C}{|Q|}\int_Q\bigg\{\int_{(4Q)^c}\bigg|\frac{1}{|x-z|^{n-\gamma}}-\frac{1}{|y-z|^{n-\gamma}}\bigg|\cdot|f(z)|\,dz\bigg\}dy.
\end{split}
\end{equation*}
Since both $x$ and $y$ are in $Q$, $z\in(4Q)^c$, by a routine geometric observation, we must have $|x-z|\geq 2|x-y|$ and $|x-z|\approx|z-x_0|$. This fact along with the mean value theorem yields
\begin{align}\label{average}
\big|I_{\gamma}f_2(x)-(I_{\gamma}f_2)_Q\big|
&\leq\frac{C}{|Q|}\int_{Q}\bigg\{\int_{(4Q)^c}\frac{|x-y|}{|x-z|^{n-\gamma+1}}\cdot|f(z)|\,dz\bigg\}dy\notag\\
&\leq\frac{C}{|Q|}\int_{Q}\bigg\{\int_{(4Q)^c}\frac{\ell}{|z-x_0|^{n-\gamma+1}}\cdot|f(z)|\,dz\bigg\}dy\notag\\
&\leq C\int_{(4Q)^c}\frac{\ell}{|z-x_0|^{n-\gamma+1}}\cdot|f(z)|\,dz\notag\\
&\leq C\sum_{j=2}^\infty\frac{1}{2^j}\cdot\frac{1}{|2^{j+1}Q|^{1-{\gamma}/n}}\int_{2^{j+1}Q}|f(z)|\,dz.
\end{align}
Another application of H\"older's inequality gives that
\begin{align*}
\big|I_{\gamma}f_2(x)-(I_{\gamma}f_2)_Q\big|&\leq C\sum_{j=2}^\infty\frac{1}{2^j}\cdot\frac{1}{|2^{j+1}Q|^{1-{\gamma}/n}}\notag\\
&\times\bigg(\int_{2^{j+1}Q}|f(z)|^p\nu(z)\,dz\bigg)^{1/p}
\bigg(\int_{2^{j+1}Q}\nu(z)^{-p'/p}\,dz\bigg)^{1/{p'}}\notag\\
&\leq C\big\|f\big\|_{L^{p,\kappa}(\nu,w)}
\sum_{j=2}^\infty\frac{1}{2^j}\cdot\frac{w(2^{j+1}Q)^{{\kappa}/p}}{|2^{j+1}Q|^{1-{\gamma}/n}}\bigg(\int_{2^{j+1}Q}\nu(z)^{-p'/p}\,dz\bigg)^{1/{p'}}\notag\\
&=C\big\|f\big\|_{L^{p,\kappa}(\nu,w)}
\sum_{j=2}^\infty\frac{1}{2^j}\cdot\frac{w(2^{j+1}Q)^{1/q}}{|2^{j+1}Q|^{1-{\gamma}/n}}\bigg(\int_{2^{j+1}Q}\nu(z)^{-p'/p}\,dz\bigg)^{1/{p'}},\notag
\end{align*}
where the last equality is due to the fact that $\kappa=p/q$. Moreover, we apply the estimate \eqref{U} and the condition $(\S)$ to get
\begin{align}\label{end1.3}
\big|I_{\gamma}f_2(x)-(I_{\gamma}f_2)_Q\big|
&\leq C\big\|f\big\|_{L^{p,\kappa}(\nu,w)}\sum_{j=2}^\infty\frac{1}{2^j}
\cdot\frac{|2^{j+1}Q|^{1/{(r'q)}}}{|2^{j+1}Q|^{1-{\gamma}/n}}\notag\\
&\times\bigg(\int_{2^{j+1}Q}w(z)^r\,dz\bigg)^{1/{(rq)}}\bigg(\int_{2^{j+1}Q}\nu(z)^{-p'/p}\,dz\bigg)^{1/{p'}}\notag\\
&\leq C\big\|f\big\|_{L^{p,\kappa}(\nu,w)}\times\sum_{j=2}^\infty\frac{1}{2^j}\notag\\
&\leq C\big\|f\big\|_{L^{p,\kappa}(\nu,w)}.
\end{align}
From the pointwise estimate \eqref{end1.3}, it readily follows that
\begin{equation*}
II=\frac{1}{|Q|}\int_Q\big|I_{\gamma}f_2(x)-(I_{\gamma}f_2)_Q\big|\,dx\leq C\big\|f\big\|_{L^{p,\kappa}(\nu,w)}.
\end{equation*}
By combining the above estimates for $I$ and $II$, we are done.
\end{proof}

\begin{proof}[Proof of Theorem $\ref{mainthm:6}$]
Let $1<p\leq\alpha<s\leq\infty$ and $f\in(L^p,L^s)^{\alpha}(\nu,w;\mu)$ with $w\in\Delta_2$ and $\mu\in\Delta_2$. For any fixed cube $Q=Q(y,\ell)$ in $\mathbb R^n$, we are going to estimate the following expression:
\begin{equation}\label{end2.1}
\frac{1}{|Q(y,\ell)|}\int_{Q(y,\ell)}\big|I_{\gamma}f(x)-(I_{\gamma}f)_{Q(y,\ell)}\big|\,dx.
\end{equation}
As usual, we decompose $f$ as $f=f_1+f_2$, where $f_1=f\cdot\chi_{4Q}$, $f_2=f\cdot\chi_{(4Q)^c}$, $4Q=Q(y,4\sqrt{n}\ell)$. By the linearity of the fractional integral operator $I_{\gamma}$, the above expression \eqref{end2.1} can be divided into two parts. That is,
\begin{equation*}
\begin{split}
&\frac{1}{|Q(y,\ell)|}\int_{Q(y,\ell)}\big|I_{\gamma}f(x)-(I_{\gamma}f)_{Q(y,\ell)}\big|\,dx\\
&\leq \frac{1}{|Q(y,\ell)|}\int_{Q(y,\ell)}\big|I_{\gamma}f_1(x)-(I_{\gamma}f_1)_{Q(y,\ell)}\big|\,dx
+\frac{1}{|Q(y,\ell)|}\int_{Q(y,\ell)}\big|I_{\gamma}f_2(x)-(I_{\gamma}f_2)_{Q(y,\ell)}\big|\,dx\\
&:=I(y,\ell)+II(y,\ell).
\end{split}
\end{equation*}
Let us first deal with the term $I(y,\ell)$. Fubini's theorem allows us to obtain
\begin{equation*}
\begin{split}
I(y,\ell)&\leq\frac{2}{|Q(y,\ell)|}\int_{Q(y,\ell)}\big|I_{\gamma}f_1(x)\big|\,dx\\
&\leq\frac{C}{|Q(y,\ell)|}\int_{Q(y,\ell)}\bigg(\int_{Q(y,4\sqrt{n}\ell)}\frac{1}{|x-z|^{n-\gamma}}|f(z)|\,dz\bigg)dx\\
&=\frac{C}{|Q(y,\ell)|}\int_{Q(y,4\sqrt{n}\ell)}\bigg(\int_{Q(y,\ell)}\frac{1}{|x-z|^{n-\gamma}}\,dx\bigg)|f(z)|\,dz\\
&\leq\frac{C}{|Q(y,\ell)|^{1-{\gamma}/n}}\int_{Q(y,4\sqrt{n}\ell)}|f(z)|\,dz,
\end{split}
\end{equation*}
where we have invoked \eqref{WHI} in the last inequality. Moreover, by H\"older's inequality, we can see that
\begin{align}\label{end2.2}
I(y,\ell)&\leq\frac{C}{|Q(y,\ell)|^{1-{\gamma}/n}}\bigg(\int_{Q(y,4\sqrt{n}\ell)}|f(z)|^p\nu(z)\,dz\bigg)^{1/p}
\bigg(\int_{Q(y,4\sqrt{n}\ell)}\nu(z)^{-p'/p}\,dz\bigg)^{1/{p'}}\notag\\
&=C\cdot w\big(Q(y,4\sqrt{n}\ell)\big)^{1/{\alpha}-1/p-1/s}\big\|f\cdot\chi_{Q(y,4\sqrt{n}\ell)}\big\|_{L^p(\nu)}\notag\\
&\times\frac{w(Q(y,4\sqrt{n}\ell))^{1/q}}{|Q(y,\ell)|^{1-{\gamma}/n}}
\bigg(\int_{Q(y,4\sqrt{n}\ell)}\nu(z)^{-p'/p}\,dz\bigg)^{1/{p'}},
\end{align}
where in the last equality we have used the hypothesis $1/s=1/{\alpha}-(1/p-1/q)$. Taking into consideration \eqref{U2} and the condition $(\S)$ on $(w,\nu)$, we further obtain that
\begin{align}\label{IW}
&I(y,\ell)\leq C\cdot w\big(Q(y,4\sqrt{n}\ell)\big)^{1/{\alpha}-1/p-1/s}\big\|f\cdot\chi_{Q(y,4\sqrt{n}\ell)}\big\|_{L^p(\nu)}\notag\\
&\times\frac{|Q(y,4\sqrt{n}\ell)|^{1/{(r'q)}}}{|Q(y,4\sqrt{n}\ell)|^{1-{\gamma}/n}}
\bigg(\int_{Q(y,4\sqrt{n}\ell)}w(z)^r\,dz\bigg)^{1/{(rq)}}
\bigg(\int_{Q(y,4\sqrt{n}\ell)}\nu(z)^{-p'/p}\,dz\bigg)^{1/{p'}}\notag\\
&\leq C\cdot w\big(Q(y,4\sqrt{n}\ell)\big)^{1/{\alpha}-1/p-1/s}\big\|f\cdot\chi_{Q(y,4\sqrt{n}\ell)}\big\|_{L^p(\nu)}.
\end{align}
We now turn to estimate the second term $II(y,\ell)$. From the definition \eqref{frac}, it then follows that for any $x\in Q(y,\ell)$,
\begin{equation*}
\begin{split}
\big|I_{\gamma}f_2(x)-(I_{\gamma}f_2)_{Q(y,\ell)}\big|
&=\bigg|\frac{1}{|Q(y,\ell)|}\int_{Q(y,\ell)}\big[I_{\gamma}f_2(x)-I_{\gamma}f_2(z)\big]\,dz\bigg|\\
&=\bigg|\frac{C}{|Q(y,\ell)|}\int_{Q(y,\ell)}
\bigg\{\int_{Q(y,4\sqrt{n}\ell)^c}\bigg[\frac{1}{|x-\zeta|^{n-\gamma}}-\frac{1}{|z-\zeta|^{n-\gamma}}\bigg]f(\zeta)\,d\zeta\bigg\}dz\bigg|\\
&\leq\frac{C}{|Q(y,\ell)|}\int_{Q(y,\ell)}
\bigg\{\int_{Q(y,4\sqrt{n}\ell)^c}\bigg|\frac{1}{|x-\zeta|^{n-\gamma}}-\frac{1}{|z-\zeta|^{n-\gamma}}\bigg|\cdot|f(\zeta)|\,d\zeta\bigg\}dz.
\end{split}
\end{equation*}
By the same reason as in the proof of Theorem $\ref{mainthm:5}$ (see \eqref{average}), we can show that for any $x\in Q(y,\ell)$,
\begin{equation}\label{average2}
\big|I_{\gamma}f_2(x)-(I_{\gamma}f_2)_{Q(y,\ell)}\big|
\leq C\sum_{j=2}^\infty\frac{1}{2^j}\cdot\frac{1}{|Q(y,2^{j+1}\sqrt{n}\ell)|^{1-{\gamma}/n}}\int_{Q(y,2^{j+1}\sqrt{n}\ell)}|f(\zeta)|\,d\zeta.
\end{equation}
Furthermore, by using H\"older's inequality, the preceding expression in \eqref{average2} can be estimated as follows:
\begin{align}\label{end2.3}
&\frac{1}{|Q(y,2^{j+1}\sqrt{n}\ell)|^{1-{\gamma}/n}}\int_{Q(y,2^{j+1}\sqrt{n}\ell)}|f(\zeta)|\,d\zeta\notag\\ &\leq\frac{1}{|Q(y,2^{j+1}\sqrt{n}\ell)|^{1-{\gamma}/n}}\bigg(\int_{Q(y,2^{j+1}\sqrt{n}\ell)}|f(\zeta)|^p\nu(\zeta)\,d\zeta\bigg)^{1/p}
\bigg(\int_{Q(y,2^{j+1}\sqrt{n}\ell)}\nu(\zeta)^{-p'/p}\,d\zeta\bigg)^{1/{p'}}\notag\\
&=w\big(Q(y,2^{j+1}\sqrt{n}\ell)\big)^{1/{\alpha}-1/p-1/s}\big\|f\cdot\chi_{Q(y,2^{j+1}\sqrt{n}\ell)}\big\|_{L^p(\nu)}\notag\\
&\times\frac{w(Q(y,2^{j+1}\sqrt{n}\ell))^{1/q}}{|Q(y,2^{j+1}\sqrt{n}\ell)|^{1-{\gamma}/n}}
\bigg(\int_{Q(y,2^{j+1}\sqrt{n}\ell)}\nu(\zeta)^{-p'/p}\,d\zeta\bigg)^{1/{p'}},
\end{align}
where the last equality is also due to the fact that $1/{\alpha}-1/p-1/s=-1/q$. It then follows from \eqref{end2.3} and \eqref{U2} that
\begin{equation*}
\begin{split}
&\big|I_{\gamma}f_2(x)-(I_{\gamma}f_2)_{Q(y,\ell)}\big|\leq C\sum_{j=2}^\infty\frac{1}{2^j}\cdot w\big(Q(y,2^{j+1}\sqrt{n}\ell)\big)^{1/{\alpha}-1/p-1/s}\big\|f\cdot\chi_{Q(y,2^{j+1}\sqrt{n}\ell)}\big\|_{L^p(\nu)}\\
&\times\frac{|Q(y,2^{j+1}\sqrt{n}\ell)|^{1/{(r'q)}}}{|Q(y,2^{j+1}\sqrt{n}\ell)|^{1-{\gamma}/n}}
\bigg(\int_{Q(y,2^{j+1}\sqrt{n}\ell)}w(\zeta)^r\,d\zeta\bigg)^{1/{(rq)}}
\bigg(\int_{Q(y,2^{j+1}\sqrt{n}\ell)}\nu(\zeta)^{-p'/p}\,d\zeta\bigg)^{1/{p'}}\\
&\leq C\sum_{j=2}^\infty\frac{1}{2^j}
\cdot w\big(Q(y,2^{j+1}\sqrt{n}\ell)\big)^{1/{\alpha}-1/p-1/s}\big\|f\cdot\chi_{Q(y,2^{j+1}\sqrt{n}\ell)}\big\|_{L^p(\nu)}.
\end{split}
\end{equation*}
Consequently,
\begin{align}\label{IIW}
II(y,\ell)&=\frac{1}{|Q(y,\ell)|}\int_{Q(y,\ell)}\big|I_{\gamma}f_2(x)-(I_{\gamma}f_2)_{Q(y,\ell)}\big|\,dx\\
&\leq C\sum_{j=2}^\infty\frac{1}{2^j}\cdot w\big(Q(y,2^{j+1}\sqrt{n}\ell)\big)^{1/{\alpha}-1/p-1/s}\big\|f\cdot\chi_{Q(y,2^{j+1}\sqrt{n}\ell)}\big\|_{L^p(\nu)}.\notag
\end{align}
Therefore by taking the $L^s(\mu)$-norm of \eqref{end2.1}(with respect to the variable $y$), and then using Minkowski's inequality, \eqref{IW} and \eqref{IIW}, we get
\begin{equation*}
\begin{split}
&\bigg\|\frac{1}{|Q(y,\ell)|}\int_{Q(y,\ell)}\big|I_{\gamma}f(x)-(I_{\gamma}f)_{Q(y,\ell)}\big|\,dx\bigg\|_{L^s(\mu)}\\
&\leq\big\|I(y,\ell)\big\|_{L^s(\mu)}+\big\|II(y,\ell)\big\|_{L^s(\mu)}\\
&\leq C\Big\|w\big(Q(y,4\sqrt{n}\ell)\big)^{1/{\alpha}-1/p-1/s}\big\|f\cdot\chi_{Q(y,4\sqrt{n}\ell)}\big\|_{L^p(\nu)}\Big\|_{L^s(\mu)}\\
&+C\sum_{j=2}^\infty\frac{1}{2^j}\cdot\Big\|w\big(Q(y,2^{j+1}\sqrt{n}\ell)\big)^{1/{\alpha}-1/p-1/s}
\big\|f\cdot\chi_{Q(y,2^{j+1}\sqrt{n}\ell)}\big\|_{L^p(\nu)}\Big\|_{L^s(\mu)}\\
&\leq C\big\|f\big\|_{(L^p,L^s)^{\alpha}(\nu,w;\mu)}+C\big\|f\big\|_{(L^p,L^s)^{\alpha}(\nu,w;\mu)}
\times\sum_{j=2}^\infty\frac{1}{2^j}\\
&\leq C\big\|f\big\|_{(L^p,L^s)^{\alpha}(\nu,w;\mu)}.
\end{split}
\end{equation*}
We end the proof by taking the supremum over all $\ell>0$.
\end{proof}

\end{document}